\documentclass[12pt]{article}
\usepackage{graphicx}
\usepackage{color}
\usepackage{amsmath}
\usepackage{amssymb}
\usepackage{amscd}
\usepackage{amsthm}
\usepackage{amsopn}
\usepackage{xspace}
\usepackage{verbatim}
\usepackage{amsfonts}
\usepackage{epic}
\usepackage{eepic}
\usepackage{stmaryrd}
\usepackage{empheq}
\usepackage{booktabs}
\usepackage{tikz}
\usepackage{pgfplots}

\usepackage[active]{srcltx}
\usepackage[a4paper,margin=3cm]{geometry}
\definecolor{red}{rgb}{1,0,0}
\definecolor{green}{rgb}{0,1,0}
\definecolor{SeaGreen}{RGB}{46,139,87}
\definecolor{Maroon}{RGB}{128,0,0}

\newcommand{\N}{\mathbb{N}}

\newcommand{\C}{{\mathbb{C}}}
\newcommand{\R}{{\mathbb{R}}}

\newcommand{\B}{\mathcal B}

\newcommand{\D}{\mathcal D}

\def\Hg {{\mathcal H}}

\newcommand{\LL}{\mathcal L}

\newcommand{\OO}{\mathcal O}

\def\Rg {{\mathcal R}}
\def\Sg {{\mathcal S}}

\def\Wg {{\mathcal W}}

\def\Tg {{\mathcal T}}

\def\Df {{\mathfrak D}}

\def\curl{\text{\rm curl}}

\def\curl{\text{\rm curl\,}}

\def\Div{\text{\rm div\,}}

\renewcommand {\Re}{{\rm Re\,}}
\renewcommand{\Im}{{\rm Im\,}}

\def\Ai{\text{\rm Ai\,}}

\def\0{\mathbf  0}

\def\XXint#1#2#3{{\setbox0=\hbox{$#1{#2#3}{\int}$ }
\vcenter{\hbox{$#2#3$ }}\kern-.6\wd0}}

 

\errorcontextlines=0 \numberwithin{equation}{section}
\theoremstyle{plain}
\newtheorem{theorem}{Theorem}[section]
\newtheorem{lemma}[theorem]{Lemma}

\newtheorem{proposition}[theorem]{Proposition}

\newtheorem{remark}[theorem]{Remark}
\newtheorem{corollary}[theorem]{Corollary}

\title{Eigenvalues of the linearized Navier-Stokes operator near
  locally Couette laminar flows.\\
\author{ Y. Almog, Department of
  Mathematics, \\ Braude College of Engineering, \\ 
    Carmiel 2161002, Israel \\~\\
  and \\~\\
\noindent   B. Helffer, Laboratoire de Math\'ematiques Jean Leray, \\CNRS and  Nantes Universit\'e, \\
 44000 Nantes  Cedex France}}

\begin{document}
\maketitle
\bibliographystyle{siam}
\begin{abstract}
  We consider the spectrum of the Orr-Sommerfeld operator, which is
  obtained from the linearized Navier-Stokes (LNS) operator near a
  laminar flow in an infinite two-dimensional channel in the large
  Reynolds number limit. For a rather general class of laminar flows,
  which behave locally near the boundary like a Couette flow and are
  bounded from below (above) by a linear increasing (decreasing)
  function, we show that there exists an eigenvalue of the LNS
  operator which coincides, to leading order, with the eigenvalue of LNS
  near Couette flow, which was formally  obtained by Wasow in 1953. 
  \end{abstract}

\section{Introduction}
\label{sec:1}

Consider the incompressible Navier-Stokes equations in the
   two-dimensional pipe  $D=\R\times (-1,1)$
   \begin{equation}
   \label{eq:1}
   \begin{cases}
   \partial_t {\mathbf v} - \epsilon \Delta {\mathbf v} + {\mathbf v} \cdot \nabla {\mathbf v} = -
   \nabla p & \text{in } \R_+\times D \,, \\
   {\mathbf v}=v_b\; \hat{i}_1  & \text{on } \R_+\times\partial D  \,,
   \end{cases} 
   \end{equation}
   where  $\hat{i}_1 = (1,0)$, ${\mathbf v}=(v_1,v_2)$ is the fluid
   velocity, and $p$ is the pressure. \\
   The parameter
    \begin{equation} \label{defreynolds}
    R :=\frac 1 \epsilon
    \end{equation}  is the Reynolds
    number of the  flow and 
    \begin{displaymath}
      v_b:\partial D\to\R
    \end{displaymath}
is the boundary velocity. Since the
    flow is incompressible we must have
   \begin{displaymath}
     \Div {\mathbf v}=0\,.
   \end{displaymath}
   We linearize \eqref{eq:1} near the laminar flow (cf. \cite{AH-arma}) 
   \begin{displaymath}
     {\mathbf v}=U(x_2)\, \hat{i}_1 \,,
   \end{displaymath}
   to obtain the linearized equation 
\begin{displaymath}
  {\bf u}_t-\mathcal T_0({\bf u},q)=0\,,
\end{displaymath}
   where  ${\bf u}=(u_1,u_2)$ and $q$  are defined on $\mathbb R_+\times
   D$, and $\mathcal T_0$ is the map 
   \begin{subequations}
   \label{eq:2}
         \begin{equation}
   ({\bf u},q) \mapsto  {\mathcal T}_0 ({\bf u} , q ):=  \mathfrak T {\mathbf u} - \nabla q\,,
   \end{equation}
where
\begin{equation}
 \mathfrak T {\mathbf u}= -  \epsilon \,\Delta {\mathbf u} + U\, \frac{\partial{\mathbf
       u}}{\partial x_1}+ \, u_2\, U^\prime\, \hat{i}_1 \,.
\end{equation}
   \end{subequations}
The associated resolvent equation for
 $\mathcal T_0$ assumes the form
   \begin{equation}
   \label{eq:3}
     \Tg_0({\bf u},q)-\Lambda{\bf u}={\bf f} \,,
   \end{equation}
where $\Div {\bf u}=0$ and  $\Lambda\in\C$ is the spectral parameter.

For $0<L\ll R$ we consider $ \mathfrak T$ acting on a subspace  $\Hg_{\Div}^0$ of $\Hg_{\Div}$
(see \cite[Section 2]{AH-arma}) which is
defined by
\begin{equation}
 \Hg_{\Div}^0:=\Big\{ {\mathbf u} \in  \Hg_{\Div}\,|\,   \int_{(0,L)\times (-1,+1)} u_1(x_1,x_2)\, dx_1 dx_2  =0\Big\}\,,
 \end{equation}
where
\begin{displaymath}
  \Hg_{\Div}=\{ {\mathbf u} \in L^2_{loc}(\overline{D},\R^2)\,|\, \Div {\mathbf u} =0 \,; \; {\mathbf
    u}\cdot{\mathbf n}|_{\partial D}=0\,; \; {\mathbf u}(\cdot+L,\cdot)={\mathbf u}(\cdot,\cdot)\}  \,.
\end{displaymath}
Set
  \begin{equation}
     \Hg=\{{\mathbf u} \in L^2_{loc}(\overline{D},\R^2) \,|\, {\mathbf u}(\cdot+L,\cdot)={\mathbf u}(\cdot,\cdot)\}  \,, 
  \end{equation}
and let $P:\Hg\to \Hg_{\Div}^0$ denote the standard $L^2$ projection
(which is sometimes called the Leray projection). Applying $P$ to
\eqref{eq:3} we obtain (recall that ${\bf u}\in \Hg_{\Div}^0$)
\begin{equation}
\label{eq:4}
  (P \mathfrak T -\Lambda){\mathbf u}=P{\bf f}\,.
\end{equation}
To account for the regularity of ${\bf u}$ and the boundary conditions
it satisfies we consider ${\bf u}\in  \Hg_{\Div}^0\cap\Wg_{\mathfrak D}$ where
\begin{displaymath}
  \Wg_{\mathfrak D}= \Big\{{\mathbf u} \in H^2_{loc}(\overline{D},\R^2)
  \,\Big|\,  \mathbf{u}\Big|_{\partial D}=0\Big\}\,.
\end{displaymath}
Note  (see \cite[Section 2.1]{AH-arma}) that 
   \begin{displaymath}
   \Hg=\Hg_{\curl}\oplus\Hg_{\Div}^0 \mbox{ where }
  \Hg_{\curl}=\{ {\mathbf u} \in \Hg\,|\, \curl {\mathbf u}=0 \,\}  \,.
  \end{displaymath}

We proceed with a formal derivation of the Orr-Sommerfeld equation.
   Since $\Div {\bf u}=0$ we may define a stream function
   \begin{displaymath}
     {\mathbf u}=\nabla_\perp\psi=(-\psi_{x_2},\psi_{x_1}) \,.
   \end{displaymath}
  Substituting the above into \eqref{eq:3} and then taking the curl of
  the ensuing equation for $\psi$ yields
  \begin{equation}
  \label{eq:5}
    \Big(-  \epsilon \Delta^2 + U\frac{\partial}{\partial x_1}\Delta -
    U^{\prime\prime}\frac{\partial}{\partial x_1}  - \Lambda \,  \Delta\Big)\psi=F \,,
  \end{equation}
where $F=\curl {\bf f}$.  Substituting $\psi(x_1,x_2)=\phi(x_2) \, e^{i\alpha
  x_1}$ (where $\alpha=2\pi/L$) into
  \eqref{eq:5} with $ \phi:(-1,1)\to\C$ yields  the equation
  \begin{subequations}
  \begin{displaymath}
    \B_{\lambda,\alpha,\beta}\,\phi=f \,, 
  \end{displaymath}
  where (setting $x_2=x$)
   \begin{equation}
\label{eq:6}
     \B_{\lambda,\alpha,\beta} =(\LL_\beta -\beta\lambda)\Big(\frac{d^2}{dx^2}-\alpha^2\Big)  -i\beta U^{\prime\prime} \,,
   \end{equation}
in which
  \begin{equation}
  \label{eq:7}
    \LL_\beta  = -\frac{d^2}{dx^2}+i\beta U\,,\,\beta=\alpha R\,, \mbox{ and } \lambda=\Lambda/\alpha-\alpha/R\,.
    \end{equation}
  \end{subequations}
We consider its realization  $ \B^\D_{\lambda,\alpha,\beta} $ on the
 following domain, corresponding  to no-slip boundary conditions,
  \begin{equation}
\label{eq:8}
  D(\B_{\lambda,\alpha,\beta}^\D)=\{u\in H^4(-1,1)\,,\, u(1)=u^\prime(1)= u(-1) =u^\prime (-1) =0 \}\,.
  \end{equation}

Note that when, for given $\alpha$ and $\beta$, if for some
$(\phi,\lambda)\in D(\B_{\lambda,\alpha,\beta}^\D)\times\C$ such that $\|\phi\|_2=1$ it holds that
$\B_{\lambda,\alpha,\beta}^\D\phi=0$ then the pair $({\bf u},\Lambda)$ given by
\begin{displaymath}
  {\bf u}=\nabla_\perp(\phi(x_2)e^{i\alpha x_1})\quad , \quad \Lambda=\alpha\lambda+\frac{\alpha^2}{R}\,
\end{displaymath}
is an eigenpair of $P\mathfrak{T}$ and satisfies \eqref{eq:4} for
${\bf f}=0\,$. \\
Obviously ${\bf u}$ belongs to $ \Wg_{\mathfrak D}\,,$ given that
$\phi(\pm1)=\phi^\prime(\pm1)=0\,$. Furthermore, ${\bf u}$ belongs to $\Hg_{\Div}^0$ for any $\alpha>0$
by construction, and since
\begin{displaymath}
  \nabla\times(\mathfrak T -\Lambda){\mathbf u}=0 \Leftrightarrow \B_{\lambda,\alpha,\beta}^\D\phi=0\,,
\end{displaymath}
it follows that $(\mathfrak T -\Lambda){\bf u}\in\Hg_{curl}$ and that \eqref{eq:4} is
satisfied.

We assume that $U\in C^4([-1,1])$ satisfies
\begin{subequations}
\label{eq:9}
  \begin{equation}
  U^\prime(-1) >0\,,
\end{equation}
and 
\begin{equation}
  U(x)>U(-1)\,,\; \forall x\in(-1,1]\,. 
\end{equation}
\end{subequations}
Note that an immediate consequence of \eqref{eq:9} is that there
exists $\Xi>0$ such that
\begin{equation}
\label{eq:10}
  U(x)\geq U(-1)+  \Xi \, (1+x) \,.
\end{equation}

  \begin{subequations}\label{eq:11}
Let  $\LL_{0,\beta}$ be the differential operator given by
\begin{equation}
  \LL_{0,\beta}=-\frac{d^2}{dx^2}+i\beta(1+x)\,,
\end{equation}
and consider its realization $\LL^{(0)}_{0,\beta}$  in $(-1,+\infty)$ on
\begin{equation}
   D(\LL^{(0)}_{0,\beta})=\{u\in H^2(-1,+\infty)\,|\,\langle1,u\rangle=0\,,\; (1+x)u\in L^2(-1,\infty)\}\,.
\end{equation}
\end{subequations}
 
Following \cite[Eqs. (6.6)-(6.10)]{AH-arma} we can relate the
spectrum of 
$\mathcal L^{(0)}_{0,\beta}$ 
with the zeroes of $A_0$ where $A_0$  is
the holomorphic extension of
 \begin{equation}
\label{eq:defA0}
 A_0(z):= e^{i\frac \pi 6}\int_z^{+\infty} {\rm Ai}(e^{i\frac \pi 6} t)\, dt\,.
 \end{equation}
By \cite[Eq. (A.4)] {AH-arma} combined with a shift by $1$ of the origin  (by
homogeneity, we can reduce the question to the case $\beta=1$, see 
\eqref{eq:12} and  (\ref{eq:106}a) below)  eigenfunctions of  $\LL_{0,\beta}$ for $\beta=1$ must
assume the form, for some non zero constant $C$, 
\begin{displaymath}
  u(x)= C \, {\rm Ai}(e^{i\frac \pi 6}[1+x+i\lambda]) \,.
\end{displaymath}
To have $u\in D(\LL^{(0)}_{0,\beta})$ we must then have
\begin{displaymath}
  \int_{-1}^{+\infty } {\rm Ai}(e^{i\frac \pi 6}[1+x+i\lambda]) \,dx=0\,,
\end{displaymath}
which in terms of $A_0$ can be rewritten as
 \begin{equation}\label{eq:zero}
 A_0 (i\lambda)=0\,,
 \end{equation}
 which is a necessary condition so that $\lambda\in\sigma(\LL^{(0)}_{0,\beta})$.  We
 can then use \cite[Corollary A.7]{AH-arma} to prove that
 $\sigma(\LL^{(0)}_{0,\beta})$ lies in the positive half-plane.  Notice also for
 future use that, using the localization of the zeroes of the Airy
 function, the zeroes of $\lambda \mapsto A_0 (i\lambda)$ are simple:
 \begin{equation}\label{eq:simplezero}
  A_0 (i\lambda)=0 \mbox{ implies }  A_0^\prime (i\lambda)\neq 0\,.
  \end{equation}
  We refer the reader to \cite[Section A2]{AH-arma} and to \cite{wa53}
  for more details on the operator $\LL^{(0)}_{0,\beta}$.

Denote by $\check \lambda_0(\beta)$ the leftmost eigenvalue of $\LL^{(0)}_{0,\beta}$ and introduce 
\begin{equation}\label{eq:7abcd}
\lambda_0(\beta):= \check \lambda_0(\beta)/\beta\,.
\end{equation}
 Note that by homogeneity
\begin{equation}
\label{eq:12}
\lambda_0(\beta)= \beta^{-1/3} \lambda_0(1)\,,
\end{equation}
and recall that
\begin{displaymath}
\frac \pi 6 < {\rm arg} \lambda_0(1) < \frac{\pi}{ 2}\,.
\end{displaymath}
Some illustrating numerics can be found in the appendix (see also
\cite[Appendix A]{drre04}).  Our main result is that $\lambda_0(\beta)$ is
close, for large values of $\beta$, to the "spectrum" of the
Orr-Sommerfeld operator $\B^\D_{\lambda,\alpha,\beta}\,$:
\begin{theorem}
\label{thm:main}
Let $U$ satisfy \eqref{eq:9} and $U^\prime(-1)=1$.  For $\alpha_0>0$ and $\varepsilon>0$, there exists 
  $\beta_0 >0$, such that,  for $\beta\geq \beta_0$ and $\alpha \in [0,\alpha_0]\,$,
  and some $\lambda\in \mathbb C$ satisfying $|\lambda-\lambda_0(\beta)-i\,U(-1)|\leq \beta^{-1/2+\varepsilon}$ it
  holds that   ${\rm Ker \,} \B^\D_{\lambda,\alpha,\beta}\neq \{0\}$.
\end{theorem}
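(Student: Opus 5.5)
We construct four independent solutions of $\B_{\lambda,\alpha,\beta}\phi=0$ on $(-1,1)$. The kernel condition then becomes the vanishing of a $4\times 4$ determinant $\mathcal D(\lambda)$ built from the boundary values $\phi(\pm1),\phi'(\pm1)$. We show that, after normalization, $\mathcal D(\lambda)$ is a small perturbation of $A_0(i\beta^{1/3}(\lambda-iU(-1)))$. The theorem then follows from Rouché's theorem on the disc $|\lambda-\lambda_0(\beta)-iU(-1)|=\beta^{-1/2+\varepsilon}$, using the simplicity of the zero \eqref{eq:simplezero}.

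\emph{Step 1: the four solutions.} Write $\B_{\lambda,\alpha,\beta}\phi=0$ as
\[
(\LL_\beta-\beta\lambda)w=i\beta U''\phi,\qquad w=\phi''-\alpha^2\phi .
\]
The two ``inviscid'' solutions $\phi_{3},\phi_4$ are perturbations of solutions of the Rayleigh-type problem $(U-\lambda)(\phi''-\alpha^2\phi)-U''\phi=0$. Since $\Re\lambda\approx\Re\lambda_0(\beta)>0$ is of order $\beta^{-1/3}$, the singular point $U=\lambda$ lies off the real segment, so these solutions are regular, with bounded derivatives, away from an $O(\beta^{-1/3})$ neighbourhood of $x=-1$. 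For the viscous solutions we use the Airy function rescaled near $x=-1$: $\mathrm{Ai}(e^{i\pi/6}\beta^{1/3}[(1+x)+i(\lambda-iU(-1))])$ solves $(\LL_\beta-\beta\lambda)w=0$ when $U$ is replaced by its linearization $U(-1)+(1+x)$. Here $U'(-1)=1$ is used, and \eqref{eq:10} guarantees that this choice decays as $x$ increases across the whole interval. We correct for the true $U$ by a Volterra/resolvent argument for $\LL_\beta$. The Taylor error $U-U(-1)-(1+x)=O((1+x)^2)$ contributes a relative error $O(\beta^{-1/3})$ in the boundary layer of width $\beta^{-1/3}$. We then integrate twice, $\phi=(\partial_x^2-\alpha^2)^{-1}w$ with decay toward $x=1$, to obtain $\phi_1$, which is exponentially small near $x=1$. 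A second viscous solution $\phi_2$, localized at $x=+1$, is built the same way using the growing/decaying Airy behaviour near $x=1$ (where $U(1)>U(-1)$ keeps us away from turning points). It is exponentially small at $x=-1$.

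\emph{Step 2: reduction of the determinant.} Because $\phi_1$ is exponentially small at $x=1$ and $\phi_2$ is exponentially small at $x=-1$, the determinant decouples, up to exponentially small errors, into a $2\times2$ block at $x=1$ that is nonvanishing and a $2\times2$ block at $x=-1$:
\[
\det\begin{pmatrix}\phi_1(-1)&\phi_{\rm in}(-1)\\ \phi_1'(-1)&\phi_{\rm in}'(-1)\end{pmatrix}.
\]
Here $\phi_{\rm in}$ is the combination of $\phi_3,\phi_4$ vanishing at $x=1$ together with its matching to $\phi_2$. The leading inviscid solution satisfies $\phi_{\rm in}(-1)\neq0$ and $\phi_{\rm in}'(-1)=O(1)$, while $\phi_1'(-1)/\phi_1(-1)\sim\beta^{1/3}$. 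Dividing by $\phi_{\rm in}(-1)\phi_1'(-1)$, the condition reduces to $\phi_1(-1)/\phi_1'(-1)=O(\beta^{-2/3})$ relative to the scale $\beta^{-1/3}$. In the model, $\phi_1'(-1)\propto\int_{-1}^\infty w$ and $\phi_1(-1)\propto\int(1+x)w$. The leading condition is therefore that the normalized $\phi_1(-1)$ vanish, i.e. $A_0(i\mu)=0$ with $\mu=\beta^{1/3}(\lambda-iU(-1))$. This is exactly the condition defining $\lambda_0(1)$, as in \eqref{eq:zero}, which gives \eqref{eq:12}.

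\emph{Step 3: Rouché.} We set $g(\mu)=A_0(i\mu)$ and write $\mathcal D=g+r$. By \eqref{eq:simplezero}, $|g|\geq c\beta^{1/3}\,|\lambda-\lambda_0(\beta)-iU(-1)|$ on the circle, which gives $|g|\gtrsim\beta^{-1/6+\varepsilon}$. We must show that $|r|=O(\beta^{-1/6})$, or better, uniformly for $\alpha\in[0,\alpha_0]$. The main obstacle is this error estimate. The naive Taylor error gives only $\beta^{-1/3}$ in $\mu$, which would suffice, but the coupling terms $i\beta U''\phi$ and the inviscid/viscous interaction near $x=-1$ need care. In particular, the inviscid solution has a logarithmic-type singularity at $U=\lambda$, at distance $\sim\beta^{-1/3}$ from $x=-1$, which degrades its derivative bounds. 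Our first attempt will be to estimate $\phi_{\rm in}'(-1)$ and its $\lambda$-dependence via explicit Frobenius expansions around the complex critical point, giving errors of size $\beta^{-1/3}\log\beta$. This loses only logarithmically, and so stays within the $\beta^{-1/2+\varepsilon}$ radius after the factor $\beta^{1/3}$ conversion. We verify holomorphy of all constructed quantities in $\lambda$ so that Rouché applies.
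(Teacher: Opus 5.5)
Your route (exact solutions, a $4\times4$ boundary determinant, Rouch\'e) differs from the paper's and is viable in principle. However, the key reduction in Step~2 is inverted.

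With $\phi_{\rm in}(-1)\neq0$, divide the condition $\phi_1(-1)\phi_{\rm in}'(-1)-\phi_1'(-1)\phi_{\rm in}(-1)=0$ by $\phi_{\rm in}(-1)\phi_1'(-1)$. This gives $\phi_1(-1)/\phi_1'(-1)=\phi_{\rm in}(-1)/\phi_{\rm in}'(-1)$. That ratio is of size at least $c/\log\beta$, far larger than $\beta^{-1/3}$, and certainly not $O(\beta^{-2/3})$. Since $\phi_1(-1)/\phi_1'(-1)$ is generically of size $\beta^{-1/3}$, it is $\phi_1'(-1)$ that must be anomalously small. Multiplying instead by $\beta^{1/3}/\phi_{\rm in}(-1)$ gives
\[
\beta^{1/3}\phi_1'(-1)=\beta^{1/3}\phi_1(-1)\,\frac{\phi_{\rm in}'(-1)}{\phi_{\rm in}(-1)}=O(\beta^{-1/3}\log\beta)\,.
\]
In your model ($\alpha=0$, $\phi_1(x)=\int_x^\infty(\xi-x)w(\xi)\,d\xi$) one has $\beta^{1/3}\phi_1'(-1)=-\beta^{1/3}\int_{-1}^\infty w\,dx=-e^{-i\pi/6}A_0(i\mu)$. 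This is the correct dispersion relation.

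What you wrote, ``the normalized $\phi_1(-1)$ vanishes'', means $\int_{-1}^\infty(1+x)w\,dx=0$. That is the vanishing of the \emph{second} primitive of $t\mapsto\mathrm{Ai}(e^{i\pi/6}t)$ at $i\mu$, which is a different equation. So ``i.e.\ $A_0(i\mu)=0$'' does not follow from your derivation; it even contradicts your own remark that $\phi_1'(-1)\propto\int w$.

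The paper's quasimode shows the right structure. It is $\hat\phi_0=\int_{-1}^\infty(1+\xi)v_0(\xi)\,d\xi-\phi_1$, with $\phi_1''=v_0$ decaying: a nonzero constant inviscid part minus the viscous solution. The no-slip condition at $x=-1$ then reduces exactly to $\phi_1'(-1)=-\int_{-1}^\infty v_0=0$. Also, $\phi_{\rm in}'(-1)=O(1)$ fails when $U''(-1)\neq0$; it is of order $\log\beta$, which is harmless here.

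Once this is corrected, Rouch\'e has ample room. You only need $|\phi_{\rm in}(-1)|\gtrsim1$, $|\phi_{\rm in}'(-1)|\ll\beta^{1/6+\varepsilon}$, and $O(\beta^{-1/3})$ relative accuracy of $w$ in the wall layer. Three things still have to be supplied:
\begin{itemize}
\item[(i)] Exact slow Orr--Sommerfeld solutions down to $x=-1$, where the critical layer merges with the wall layer. Since $U$ is only $C^4$, build them on the real segment, e.g.\ from the exact $\alpha=0$ Rayleigh solutions $U+i\lambda$ and $(U+i\lambda)\int^x(U+i\lambda)^{-2}$, rather than by a Frobenius expansion at the complex critical point.
\item[(ii)] $\phi_{\rm in}(-1)\neq0$. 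At $\lambda=iU(-1)$ this follows from the Rayleigh identity for $\phi/(U-U(-1))$ using $U>U(-1)$ on $(-1,1]$, and then by continuity.
\item[(iii)] Holomorphy in $\lambda$ of the normalized determinant.
\end{itemize}

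The paper sidesteps all of this and never constructs exact solutions. It combines the quasimode $\tilde\phi=\eta\hat\phi_0$, with $\|\B_{\lambda_0,\alpha,\beta}\tilde\phi\|_2\le C\beta^{1/3}$, with the resolvent bound \eqref{eq:221} on the annulus $\beta^{-1/2+\varepsilon}\le|\lambda-\lambda_0(\beta)|\le\varkappa\beta^{-1/3}$. That bound comes from a boundary-layer estimate via the no-slip Schr\"odinger operator, plus an outer energy estimate on $(-1+\delta,1)$ where $\Im\lambda<U(-1+\delta)$ keeps the critical point away. It then concludes by Cauchy's formula that $\oint(-\partial_x^2+\alpha^2)(\B^{\D}_{\lambda,\alpha,\beta})^{-1}\tilde v\,d\lambda\neq0$.

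The paper's route needs only a priori estimates and no nondegeneracy of inviscid solutions. Its radius $\beta^{-1/2+\varepsilon}$, however, is set by the quasimode error against the resolvent bound. Your corrected route would give the sharper localization $O(\beta^{-2/3}\log\beta)$ and exactly one zero of $\mathcal D$ in the disc.
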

We recall from above that $\Lambda=\alpha\lambda+\epsilon\alpha^2$ is an eigenvalue of the linearized
Navier-stokes operator. 

\begin{remark}
  Note that since
\begin{displaymath}
  \B^\D_{\lambda,\alpha,\beta}[U]= \overline{\B^\D_{\bar{\lambda},\alpha,\beta}}[-U]
\end{displaymath}
we can use Theorem \ref{thm:main} to establish existence of a critical
value near $\bar{\lambda}_0(\beta)+i\,U(-1)$ for laminar flows which satisfy instead of
\eqref{eq:10} the condition
\begin{displaymath}
   U(x)\leq U(-1)-  \Xi \, (1+x) \,.
\end{displaymath}
Furthermore, applying the transformation $\Rg \phi(x)=-x$ yields that
\begin{displaymath}
  \Rg^{-1}\B^\D_{\lambda,\alpha,\beta}[U]\Rg= \B^\D_{\lambda,\alpha,\beta}[\Rg U]
\end{displaymath}
Consequently, if $U^\prime(1)>0$ and
\begin{displaymath}
  U(x)-U(1)\leq-\Xi\,(1-x)
\end{displaymath}
we may use Theorem \ref{thm:main} to establish the existence of a
critical value in the vicinity of $\lambda_0(\beta)+i\,U(1)$.
\end{remark}

The spectral analysis of the Orr-Sommerfeld operator \cite{Orr1907}
has received significant attention recently, see
\cite{chen2020transition,AH-arma,AH-ems,almog2025stability,jia2023uniform,chen2024enhanced,grenier2016spectral}
to name just a few of the works addressing the linear operator only.
Most of these works, with the exception of \cite{grenier2016spectral},
prove stability of the laminar flow, and find the order of magnitude of
$\inf\Re\sigma(P\mathfrak T)$ in the large Reynolds number limit. A
significant body of literature deals with weakly non-linear analysis
of the laminar flow, see for instance
\cite{BGM,Mas17,chen2024transition}.  

In a recent work \cite{jezequel2025orr} the authors show (for
$\alpha\gtrsim1$ and holomorphic $U$) that existence of eigenvalues for the Orr-Sommerfeld operator
depend on their existence as eigenvalues of the corresponding Rayleigh
operator except for the cases $|\lambda-i\,U(\pm1)|\ll1$ in the limit $\beta\to\infty$.  For
monotone shear flow, when
no eigenvalues exist for the Rayleigh operator (see
\cite{almog2025stability}), if combined with
the present contribution, \cite{jezequel2025orr} hints towards
the conjecture 
\begin{displaymath}
 \lim_{\beta\to\infty} \frac{\beta^{1/3}}{\alpha}\inf \Re \sigma(P\mathfrak T)= \lambda_0(1)\,.
\end{displaymath}

While the construction of the quasi-mode is fairly simple, much effort
has to be invested in the estimate of $(\B^\D_{\lambda,\alpha,\beta})^{-1}$ for $\lambda$
in the vicinity of $\lambda_0(\beta)$. At first, we need to estimate it near the
boundary at $x=-1$, in $[-1,-1+\delta]$ for some $\beta^{-1/6}\ll\delta\ll1$.  Then, we
estimate it in the outer domain $(-1+\delta,1)$.
Once we have obtained a bound for $\|(\B^\D_{\lambda,\alpha,\beta})^{-1}\|$ along a suitable path around $\lambda_0(\beta)$,
  existence of eigenvalues can be proved by using Cauchy's
Theorem .

The rest of the contribution is arranged as follows: In the next
section we construct a quasimode based on the principal eigenfunction
of $\LL^{(0)}_{0,\beta}$. Section 3 is mostly devoted to resolvent estimates of
various Schr\"odinger operators for $\lambda$ in the vicinity of $\lambda_0(\beta)$. In
Section 4 we consider a no-slip Schr\"odinger operator, as defined in
\cite[Section 6]{AH-arma}, once again for $\lambda$ in the vicinity of
$\lambda_0(\beta)$. Finally, in the last section we complete the proof of
Theorem \ref{thm:main}.

\section{Quasimode construction}
\label{sec:2}

In this section we construct an approximation,
for given $(\alpha,\beta)\in [0,\alpha_0]\times [\beta_0,+\infty)$, to a pair
$(\lambda,\phi)\in D(\B^\D_{\lambda,\alpha,\beta})\times\C$ satisfying
\begin{equation}\label{eq:13}
\B^\D_{\lambda,\alpha,\beta}\, \phi=0\,.
\end{equation}
 More precisely, we seek some $L^2$-normalized $\phi^{app}\in D(\B^\D_{\lambda,\alpha,\beta})$ and $\lambda^{app}\in\C$ such that
\begin{displaymath}
\B^\D_{\lambda^{app},\alpha,\beta}\, \phi^{app}= r^{app}\,,
\end{displaymath}
where $r^{app}$ is sufficiently small as $\beta \to +\infty$ to allow us the
proof of Theorem \ref{thm:main}. It turns out in the sequel that it is
enough, for given $\alpha_0 >0$ to find $C$ and $\beta_0$ such that, for $\beta \geq
\beta_0$ and $\alpha\in [0,\alpha_0]$, to construct a non trivial quasimode
$\phi^{app}$ such that
\begin{equation}
\label{eq:goal}
 \|r^{app}\|_2\leq C\beta^{1/2}\|(\phi^{app})^{\prime\prime}-\alpha^2\phi^{app}\|_2\,.
\end{equation}

 Without any loss of generality we assume that 
\begin{equation}
\label{eq:14}
U(-1)=0\,,
\end{equation}
otherwise we
redefine the spectral parameter by setting
\begin{displaymath}
 \tilde{U}=U-U(-1)\quad ; \quad \tilde{\lambda}= \lambda+i\,U(-1) \,.
\end{displaymath}
To simplify the problem even further we assume that 
\begin{equation}
\label{eq:15}
U^\prime(-1)=1\,,
\end{equation}
without any loss of generality. Indeed, for $U^\prime(-1)>0$, if we set
$\tilde{U}=U/U^\prime(-1)$ and $\tilde{\beta}=U^\prime(-1)\beta$, then  \eqref{eq:13} remains
unaltered.  \\

\paragraph{ Some heuristics}
In the sequel, we assume \eqref{eq:9}, \eqref{eq:14} and
\eqref{eq:15}.\\

We begin by heuristically explaining the construction of the leading order term.
To this end we neglect   the term $i\beta U^{\prime\prime}\phi$  and assume that
\begin{displaymath}
  U=1+x\,.
\end{displaymath}
We then obtain the approximate equation in $(-1,+1)$
\begin{subequations}
\label{eq:16}
\begin{equation}
  (\LL_{0,\beta}-\beta\lambda)v=0\,,
\end{equation}
where 
\begin{equation}
  v=-\phi^{\prime\prime}+\alpha^2\phi\,,
\end{equation}
\end{subequations}
for some $\phi\in D(\B_{\lambda,\alpha,\beta}^\D)$ and 
\begin{equation}
\label{eq:17}
  \LL_{0,\beta}=-\frac{d^2}{dx^2}+i\beta(1+x)\,.
\end{equation}
It can be easily verified that for any $(\phi,\lambda)$ in
$D(\B^\D_{\lambda,\alpha,\beta})\times\C$ satisfying \eqref{eq:16}it holds that
\begin{subequations}
\label{eq:18}
  \begin{equation}
  \langle\zeta_\pm,v\rangle=0 \,,
\end{equation}
where $\zeta_\pm: [-1,1]\to\R$ satisfy
\begin{equation}
  \begin{cases}
    -\zeta_\pm^{\prime\prime}+\alpha^2 \zeta_\pm=0 & \text{in }(-1,1)\,, \\
    \zeta_\pm(\pm1)=1\,, & \zeta_\pm(\mp1)=0 \,.
  \end{cases}
\end{equation}
\end{subequations}

Next, let $\check{\psi}_\pm(\cdot.\lambda,\beta)$ denote the $L^2(-1,\infty)$
solutions to the problems (as in  \cite[Eqs. (6.8), (8.86)]{AH-arma} 
but with no cut-off function $\Theta_\pm$, and with   $U(-1)=0$, and $J_-=U'(-1)=1$):
\begin{subequations}
\label{eq:19}
  \begin{equation}
  \begin{cases}
 (\LL_{0,\beta}-\beta\lambda)\check {\psi}_-=0 &\mbox{ for }  -1<x \\
     \check  \psi_-(-1)=1 \,,
  \end{cases}
\end{equation}
and 
\begin{equation}
  \begin{cases}
     \Big(-\frac{d^2}{dx^2}+i\beta[U(1)-J_+(1-x)+i\lambda]\Big)\check {\psi}_+=0 &\mbox{ for } x<1 \\
     \check   \psi_+(1)=1 \,,
  \end{cases}
\end{equation}
 where $J_+=U^\prime(1)$. \\
 Note that in this section it holds
  that $U(1)=2$ and $J_+=1$. For later reference we need however a slightly
  more  general definition.
\end{subequations}
   These solutions are given, for   $\lambda\not\in
   \beta^{-1}\,\sigma(\LL_{0,\beta}^{\D,(-1,\infty)})$ where $\LL_{0,\beta}^{\D,(-1,\infty)}$ is
   the realization of $\LL_{0,\beta}$ on $H^2(-1,\infty)\cap H^1_0(-1,\infty)$ (see
   Section \ref{sec:dirichlet-infinite}), 
 by
    \begin{subequations}
\label{eq:20} 
    \begin{equation} 
\check \psi_-(x)= \frac{{\rm Ai}\big( \beta^{1/3}e^{ i\pi/6}\big[(1+x)+
    i\lambda \big]\big)} {{\rm Ai}\big( \beta^{1/3}e^{ 2i\pi/3} \lambda\big)}\,, 
\end{equation}
and 
\begin{equation}
\overline{\check \psi_+ (x)}= \frac{{\rm Ai}\big((J_+ \beta)^{1/3}e^{
    i\pi/6}\big[(1- x)+ iJ_+^{-1}(\bar \lambda+ i\,U( 1))\big]\big)}
{{\rm Ai}\big(J_+^{-2/3}\beta^{1/3}e^{2 i\pi/3}[\bar \lambda+  i\,U(1)]\big)}\,.
\end{equation}
\end{subequations}
 In the rest of this section we continue to assume $U(1)=2$ and
  $J_+=1$ (given that $U=1+x$).\\
In the construction of the
quasi-mode, we choose 
\begin{equation}
  \label{eq:21}
\lambda \sim \lambda_0(\beta) \mbox{  with } \lambda_0(\beta) \mbox{  introduced in
\eqref{eq:12}}\,.
\end{equation}
Hence, for $\beta$ large enough,
\begin{displaymath}
 {\rm Ai}\big( \beta^{1/3}e^{ 2i\pi/3} \lambda\big) \sim   {\rm Ai}\big(e^{ 2i\pi/3} \lambda_0(1) \big)\neq 0\, , 
   \mbox{ and }
   {\rm Ai}\big(\beta^{1/3}e^{2 i\pi/3}  (\bar \lambda+  2i )\big)\neq 0\,.
   \end{displaymath}

  To prove the first inequality we refer the reader to Lemma
   \ref{lem:no-zeroes} from which we can conclude, via analytic
   dilation and \eqref{eq:12} that $\arg \lambda_0(\beta)\neq\pi/3$, and consequently
   also that $\arg (e^{ 2i\pi/3}\lambda_0(1))\neq\pi$.  A numerical confirmation is
   provided in the appendix.

  The second inequality follows from the fact that by
   \eqref{eq:12} we have, as $\beta \to +\infty$, 
   \begin{displaymath}
       \arg (e^{2
     i\pi/3}[\bar \lambda+  2i ])\to 7\pi/6 \equiv -5  \pi/6  \; ({\rm mod.}\; 2\pi) \,,
   \end{displaymath}
   whereas the zeroes of Airy's
   function lie all on the negative real axis.  Using  the asymptotic behaviour of  Airy
function  we can obtain an asymptotic approximation of $\check{\psi}_\pm$
in the limit $\beta\to\infty$. We recall (see for example  \cite[Eq. (10.4.59)]{abst72})) that:
  \begin{equation}\label{eq:asymptAiry}
  {\rm Ai}(z)\sim \frac 12 \pi^{-1/2} z^{-1/4} e^{-\frac 23 z^{3/2}}
  \Big(\sum_{k=0}^\infty c_k z^{-3k/2}\Big) \mbox{ for } |{\rm arg } z|< \pi\,,|z| \to +\infty\,, 
\end{equation}
where $c_0=1$.\\
For later reference we also recall from
  \cite[Eq. (10.4.61)]{abst72} that
 \begin{equation}
\label{eq:22}
  {\rm Ai}^\prime(z)\sim -\frac 12 \pi^{-1/2} z^{1/4} e^{-\frac 23 z^{3/2}}
  \Big(\sum_{k=0}^\infty d_k z^{-3k/2}\Big) \mbox{ for } |{\rm arg } z|< \pi\,,|z| \to +\infty\,, 
\end{equation}
where $d_0=1$.\\
Since $|\lambda|$ is small it holds for sufficiently large $\beta$ that the
argument of \break $e^{ i\pi/6}[(1- x)+ i (\bar \lambda+ 2i )]$ is nearly
$-5\pi/6$ (mod. $2\pi$) and hence we can apply \eqref{eq:asymptAiry}. From
\eqref{eq:asymptAiry} we can obtain the precise asymptotic behavior of
$\check \psi_+$ except for a small semi-neighborhood of $x=-1$ where
$\check \psi_+$ is exponentially small as is manifested below.
Similarly, for $\check \psi_-$, the argument of $e^{ i\pi/6} [(1+x)+ i\lambda )]$
tends to $\pi/6$ as $\beta\to\infty$.  Thus, we can use \eqref{eq:asymptAiry} to
obtain the precise asymptotic behavior of $\check \psi_-$ except for a
small semi-neighborhood of $x=-1$. More precisely for $M$ large
enough, we get a proper estimate for $(1+x) \geq M \beta^{-1/3}$ using 
\eqref{eq:asymptAiry} for both the numerator and the denominator in (\ref{eq:20}a).
Then, we observe that $\check \psi_-$ is bounded for $0 \leq (1+x) \leq M
\beta^{-1/3}$.  For latter reference we also obtain here a lower bound for
$\| \check \psi\|_p^p$. For $m>0$ small enough we observe, using
(\ref{eq:20}a), that $\check \psi_-$ remains sufficiently close to $1$
on $[-1,-1+m\beta^{-1/3}]$ so that for any $p\in [-1,+\infty)$ it holds that
 \begin{equation}\label{eq:23} 
 \int_{-1}^{1 } | \check \psi_-(x)|^p \, dx  \geq  \int_{-1}^{-1 +m \beta^{-1/3})  } | \check \psi_-(x)|^p\, dx \geq \frac{1}{C_{p,m}} \beta^{-1/3}\,.
 \end{equation}

By the foregoing discussion, applying \eqref{eq:asymptAiry} to
\eqref{eq:20} yields
\begin{equation} 
\label{eq:24}
 \overline{ \check{\psi}_+}(x)=\exp \Big\{-\frac{2}{3}e^{i\pi/4}\beta^{1/2}[-(1+x- i \bar
  \lambda)^{3/2}+(2-i \bar \lambda)^{3/2}])\Big\}[1+\OO(\beta^{-1/2})] \,,
\end{equation}
which is valid for any $x\in[-1+M\beta^{-1/3},1]$ for sufficiently large
$M>0\,$.\\
 Note that,  for $x\in[-1,-1+M\beta^{-1/3}]\,$,  it holds that
\begin{displaymath}
  \beta^{1/3}\big|(1-  x)+ i (\bar \lambda+  2i )\big|\leq 2M\,,
\end{displaymath}
and consequently, by (\ref{eq:20}b) and \eqref{eq:asymptAiry}
(applied to the denominator in (\ref{eq:20}b)) we obtain
\begin{equation}
\label{eq:25}
  |\check \psi_+ (x)|\leq C_M\beta^{\frac{1}{12}}e^{-\beta^{1/2}} \mbox{ for } x\in[-1,-1+M\beta^{-1/3}]\,.
\end{equation}
By \eqref{eq:24} we can conclude, that for all $x\in[1-\beta^{-1/3},1]$,  
\begin{displaymath}
 \check{\psi}_+(x) =e^{- e^{i\pi/4} [2\beta]^{1/2}(1-x)}[1+\OO(\beta^{1/2}[1-x]^2)]\,,
\end{displaymath}
and hence, by \eqref{eq:24} and \eqref{eq:25} we have, for all $x\in[-1,1]$\,,
\begin{equation}\label{eq:26}
 \check{\psi}_+(x)=e^{- e^{i\pi/4} [2\beta]^{1/2}(1-x)}\big[1+\OO(\beta^{-1/6})\big]+
 \OO(e^{-\beta^{1/6}})\,.
\end{equation}
Notice that the above asymptotic approximation implies
for any $s\geq 0$ the existence of $C_s$ such that, for 
$\lambda$  satisfying \eqref{eq:21} and $\beta$ large enough,
   \begin{equation}\label{eq:216x}
   \| (1-x)^s \check \psi_+\|_1 \leq C_s  \, \beta^{-(s+1)/2}\,,
   \end{equation} 
and
\begin{equation}
  \label{eq:27}
  \| (1-x)^s \check \psi_+\|_2 \leq C_s  \, \beta^{-(2s+1)/4}\,,
\end{equation}

We have already mentioned above that for $0 \leq (1+x) \leq M \beta^{-1/3}$,
$|\check \psi_-|$ is uniformly bounded as $\beta\to\infty$. For $(1+x) \geq M \beta^{-1/3}\,$,
we use \eqref{eq:asymptAiry} to obtain
      \begin{equation*} 
|\check \psi_-(x)| \leq  C\, \big|{\rm Ai}\big( \beta^{1/3}e^{ i\pi/6}\big[(1+x)+
    i\lambda \big]\big)\big| \leq \hat C \Big| e^{-\frac 23 \beta^{1/2} (e^{ i\pi/6}[(1+x)+
    i\lambda ]) ^{3/2}}\Big| \,, 
\end{equation*}
and hence (for sufficiently large $M$)
  \begin{equation} \label{eq:28}
|\check \psi_-(x)| \leq \hat C \,  e^{-\frac 23 \cos (\pi/4) \beta^{1/2} (1+x)^{3/2} (1 - \hat C /M) }
     \,.
    \end{equation}
   Ifollows drom \eqref{eq:28} that for any $s\geq 0$, there exists
   $C_s>0$ such that,  for $\lambda$ satisfying \eqref{eq:21} and for $\beta$ large
   enough,  
   \begin{equation}\label{eq:29}
   \| (1+x)^s \check \psi_-\|_1 \leq C_s\, \beta^{-(s+1)/3}\,.
   \end{equation} 
   Similarly, we obtain for future use that
    \begin{equation}\label{eq:30}
   \| (1+x)^s \check \psi_-\|_2 \leq C_s \, \beta^{-(s+1)/6}\,,
   \end{equation} 
 and from \eqref{eq:22} and \eqref{eq:20} that
\begin{equation}
  \label{eq:31}
 \| (1+x)^s \check \psi_-^\prime \|_2 \leq C_s \, \beta^{-(s-1)/6}\,.
\end{equation}
\begin{remark}
The above estimates are already present in \cite{AH-arma} (see 
\cite[Eq. (8.91)]{AH-arma} for instance). In this context it should be
mentioned that the estimates in  \cite{AH-arma} are stated under the
assumption $\Re\lambda \leq \Re \lambda_0$ given that all the results in \cite{AH-arma}
are obtained under this assumption (or by assuming that $\beta^{1/3}\Re\lambda$ is
sufficiently small). Nevertheless, as is clear from \eqref{eq:24}
and \eqref{eq:25} it is unnecessary to stipulate that $\Re\lambda \leq \Re \lambda_0$
for \eqref{eq:29} and \eqref{eq:30} to be valid in the present
context. 
  \end{remark}

 By \eqref{eq:26} and \eqref{eq:28}, for $\beta$ large enough,
    $\check \psi_+$ and $\check \psi_-$ are respectively localized near
    $x=1$ and near $x=-1$, and hence must be linearly independent.
    Consequently, we may write the general solution of \eqref{eq:16} in
    the form
\begin{displaymath}
  v=C_+\check {\psi}_+ + C_-\check {\psi}_-  \,.
\end{displaymath}
Consequently, by \eqref{eq:18}, we obtain that
\begin{equation}
  \label{eq:32}
  \begin{bmatrix}
    a_+^+ & a_+^- \\
    a_-^+ & a_-^-   
  \end{bmatrix}
  \begin{bmatrix}
    C_+ \\
    C_-
  \end{bmatrix}
=0 \,,
\end{equation}
where
\begin{displaymath}
  a_\pm^+= \langle\zeta_\pm, \check \psi_+\rangle\quad \;   \quad a_\pm^-= \langle\zeta_\pm,\check \psi_-\rangle\,.
\end{displaymath}
To have nontrivial solutions for \eqref{eq:32} we must have
\begin{equation}
\label{eq:33}
  a_+^+ a_-^- - a_-^+ a_+^-=0 \,.
\end{equation}
Recalling that $\lambda$ satisfies \eqref{eq:21} we have by \eqref{eq:216x}
and \eqref{eq:29}  (with $s=1$) 
\begin{equation}
\label{eq:34} 
  |a_+^-|\leq C\beta^{-2/3} \quad \text{and} \quad    |a_-^+|\leq C\beta^{-1} \,.
\end{equation}

We obtain, given that $\zeta_+(1)=1$ and $ a_+^+= \langle\zeta_+, \check \psi_+\rangle$,  that
\begin{equation}
\begin{array}{ll}
  a_+^+&=\int_{-1}^1e^{-e^{i\pi/4}[2\beta]^{1/2}(1-x)}
  [1+  \OO(\beta^{-1/6})]\zeta_+(x)\,dx+\OO(e^{-\beta^{1/6}})\\
 & =e^{-i\frac{\pi}{4}}\Big[\frac{2}{\beta}\Big]^{1/2}[1+\OO(\beta^{-1/2})]\,.  
 \end{array}
\end{equation}
By  \eqref{eq:33},  \eqref{eq:34}, and \eqref{eq:26}, we obtain that
\begin{equation}
  \label{eq:35}
|a_-^-|\leq C\beta^{-7/6} \,.
\end{equation}
By \eqref{eq:32}, \eqref{eq:33}, and \eqref{eq:34} we then obtain that 
\begin{equation}
  \label{eq:36}
|C_+|\leq C\beta^{-1/2}\, |C_-| \,.
\end{equation}

\paragraph{construction of the quasimode} \strut \\

In view of \eqref{eq:36}, it seems natural to try to
construct a quasimode where the leading order term of the solution of
\eqref{eq:13} is given near $x=-1$ (we introduce later a cut-off
function to ascertain that the boundary condition at $x=1$ is
satisfied) by
\begin{displaymath}
v_{0,\beta}= \check{\psi}_- (\cdot, \lambda^{(\alpha)}_{0,\beta})  
\end{displaymath}
where $\beta \lambda^{(\alpha)}_{0,\beta}$ is one of the leftmost eigenvalues (as a
matter of fact it is numerically shown in the appendix that for $\alpha=0$
the leftmost eigenvalue is unique) of $ \LL^{(\alpha)}_{0,\beta}$ whose
associated differential operator is the same as that of $ \LL_{0,\beta}$
and whose domain is given by
\begin{equation}\label{eq:17alpha}
  D(\LL^{(\alpha)}_{0,\beta})=\{u\in H^2(-1,\infty)\,|\,\langle e^{-\alpha(1+\cdot)},u\rangle=0,\; xu\in L^2(-1,\infty)\}\,,
\end{equation}
(which is compatible with (\ref{eq:11}b) in the case $\alpha=0$).
For the purpose of quasimode construction, we neglect $C_+\psi_+$ given
that the approximation of $U$ by $(1+x)$ is only valid near $x=-1$ and
since $\psi_+$ is exponentially small near $x=-1$.  The fast decay of
$v_{0,\beta}$ results from the fast decay of
$\check{\psi}_-$ away from $x=-1$ (see \eqref{eq:28}). \\

We also choose, for the purpose of the quasimode construction, to
replace $\alpha$ by $0$ for $\alpha \in [0,\alpha_0]$, $D(\LL^{(\alpha)}_{0,\beta})$ by $
D(\LL^{(0)}_{0,\beta}) $, and $\lambda^{(\alpha)}_{0,\beta}$ by
$\lambda_{0}(\beta)$. \\
To illustrate why it is a reasonable choice we write, with $\lambda= \lambda^{(\alpha)}_{0,\beta}\,$, 
     \begin{equation}
\label{eq:37}
      0=\langle e^{-\alpha(1+\cdot)},\check \psi_-\rangle=  \langle 1,\check \psi_-\rangle-  \langle 1-e^{-\alpha(1+\cdot)},\check \psi_-\rangle\,.
    \end{equation}
For the second term on the right-hand-side, we have by \eqref{eq:29}:
\begin{equation}
\label{eq:38}
  | \langle 1-e^{-\alpha(1+\cdot)},\check \psi_-\rangle|\leq C\, \|(1+\cdot)\check \psi_-\|_1\leq \hat C\,\beta^{-2/3}\,.
\end{equation}
 Hence, we have obtained
\begin{displaymath}
 \langle 1,\check \psi_- (\cdot\,,\, \lambda^{(\alpha)}_{0,\beta})\rangle=  \mathcal O (\beta^{-2/3})\,.
\end{displaymath}
Alternatively we may write (see \eqref{eq:zero})
\begin{displaymath}
A_0 ( i\beta^{-1/3} \lambda^{(\alpha)}_{0,\beta}) = \mathcal O (\beta^{-1/3})\,,
\end{displaymath}
and since $A_0$ and $A^\prime_0$ have no common zero (see
\eqref{eq:simplezero}), we obtain that $\beta^{-1/3} \lambda^{(\alpha)}_{0,\beta}$ should
reside in an $\mathcal O(\beta^{-2/3})$ vicinity of a zero of $\lambda \mapsto A_0 (i
\lambda)$. Hence, we should have 
\begin{displaymath}
d(\lambda^{(\alpha)}_{0,\beta},\sigma(\LL^{(0)}_{0,\beta}))\leq C \beta^{-1/3}\,.
\end{displaymath}

In the sequel, we shall thus consider on  $(-1,+\infty)$
\begin{equation}\label{eq:39}
v_{0,\beta}(x) := \check{\psi}_- (x,\lambda_{0}(\beta)) \,.
\end{equation}
Note that by \eqref{eq:39}, \eqref{eq:30}, and \eqref{eq:23}
\begin{equation}
\label{eq:40}
  \|v_0\|_2\approx\beta^{-1/6} \,,
\end{equation}
where $\approx$ means that there exist  $\beta_0$ and $C >0$ such that for $\beta \geq \beta_0$ 
\begin{displaymath}
\frac 1C \beta^{-1/6} \leq  \|v_0\|_2\leq C \beta^{-1/6}\,.
\end{displaymath}
 In addition, using  the fact that $v_0(-1)=1$ and \eqref{eq:28}
it holds that
\begin{equation}
\label{eq:41}
    \|v_0\|_\infty\approx1 \,.
  \end{equation}
  
  Let $\hat \phi_0$ denote the leading order approximation of $\phi$. In
  view of (\ref{eq:16}b) we choose it to be 
  the solution of $-\hat{\phi}_0^{\prime\prime}(\cdot ,\alpha)+\alpha^2\hat{\phi}_0(\cdot,\alpha)=v_0$. For
  $\alpha =0$ it holds that
\begin{equation}
\label{eq:42}
   \hat \phi_0(x)=-\int_{-1}^x(x-\xi)\,v_0(\xi)\,d\xi \,,
\end{equation}
whereas for $\alpha >0$, 
\begin{equation}
\label{eq:43}
\hat{\phi}_0(x,\alpha)=-\frac{1}{\alpha}\int_{-1}^x\sinh(\alpha[x-\xi])\,v_0(\xi)\,d\xi \,.
\end{equation}
We now compare between $\hat{\phi}_0(\cdot,\alpha)$ and $\hat \phi_0$. 
To this end we first observe that 
 for some $\gamma>0$ it holds by \eqref{eq:28}
that
\begin{equation}\label{eq:44}
  |v_0(x)|\leq Ce^{-\gamma\beta^{1/3}(1+x)} \,.
\end{equation}
For later reference we observe that for $k\geq 1$ \eqref{eq:44} implies  
\begin{equation}
\label{eq:45}
\|(1+\xi)^kv_0\|_1 \leq C\beta^{-(k+1)/3}\,.
\end{equation}
 For $1+x\leq 2$ 
we have
  \begin{displaymath}
    \hat{\phi}_0(x,\alpha)= -\frac{1}{\alpha}\int_{-1}^x\sinh(\alpha[1+x])\,v_0(\xi)\,d\xi
    +\frac{1}{\alpha}\int_{-1}^x[\sinh(\alpha[1+x])-\sinh(\alpha[x-\xi])]\, v_0(\xi)\,d\xi  
  \end{displaymath}
Since $\langle1,v_0\rangle=0$ it follows by \eqref{eq:44} that
\begin{displaymath}
  \Big|\frac{1}{\alpha}\int_{-1}^x\sinh(\alpha[1+x])\, v_0(\xi)\,d\xi\Big|\leq C\beta^{-1/3}(1+x)e^{-\gamma\beta^{1/3}(1+x)}\,.
\end{displaymath}
Furthermore, since
\begin{displaymath}
  \frac{1}{\alpha}\big[\sinh(\alpha[1+x])-\sinh(\alpha[x-\xi])\big]=
  (1+\xi)\cosh(\alpha[1+x]) +\OO(|1+\xi|^2) 
\end{displaymath}
We obtain, using \eqref{eq:45} with $k=2$,  that
\begin{displaymath}
  \frac{1}{\alpha}\int_{-1}^x\big[\sinh(\alpha[1+x])-\sinh(\alpha[x-\xi])\big]\,v_0(\xi)\,d\xi
  =\langle(1+\xi),v_0\rangle\cosh(\alpha[1+x])  +\OO(\beta^{-1}) \,. 
\end{displaymath}
Hence, 
\begin{subequations}\label{eq:46}
for  $\alpha >0$ 
\begin{equation}
  \hat{\phi}_0(x,\alpha)=\langle(1+\xi),v_0\rangle\cosh(\alpha[1+x]) +o(\beta^{-2/3}) \,,
\end{equation}
and for $\alpha=0$ 
\begin{equation}
  \hat{\phi}_0(x,0)=\langle(1+\xi),v_0\rangle+o(\beta^{-2/3}) \,.
\end{equation}
\end{subequations}
By \eqref{eq:45} and \eqref{eq:46} we obtain  that
\begin{displaymath}
  |\hat{\phi}_0(x,\alpha)-\hat{\phi}_0(x)|\leq C\beta^{-2/3}(1+x) \,.
\end{displaymath}
For $\alpha\in [0,\alpha_0]$ we therefore choose as our leading order
approximation $ \hat{\phi}_0$ as introduced in \eqref{eq:42}.  We note
that, for $1+x\ll1$, \eqref{eq:42} is a good approximation of
\eqref{eq:43}. For $1+x\gtrsim1$, we obtain from \eqref{eq:44} and
\eqref{eq:45} that
\begin{equation}
\label{eq:47}
  |\hat \phi_0(x)|\leq C\beta^{-2/3} \,,
\end{equation}
and hence both \eqref{eq:42} and \eqref{eq:43} are sufficiently
accurate in that regime.  We note further that
\begin{equation}
  \label{eq:48}
\|\hat \phi_0\|_2\leq C\beta^{-2/3} \,.
\end{equation}
Since $<1,v_0>_{L^2(-1,\infty)}=0\,$, we  obtain
  \begin{displaymath}
 \hat \phi_0^\prime(x) = \int_{-1}^x v_0(\xi) \, d\xi = -\int_x^\infty v_0(\xi) \, d\xi \,,
  \end{displaymath}
 and then immediately deduce   from \eqref{eq:44}
\begin{equation}
\label{eq:49}
  |\hat \phi_0^\prime(x)|\leq C\beta^{-1/3} \, e^{-\gamma\beta^{1/3}(1+x)}\,.
\end{equation}

We conclude this section by constructing a quasimode $\tilde{\phi}$ for
(\ref{eq:13}a) and then estimate for this choice the error
$\B_{\lambda,\alpha,\beta}\, \tilde{\phi}$ for $\lambda=\lambda_0(\beta)$. As already mentioned above,
our choice of $\hat \phi_0$ can be a good estimate in the close vicinity
of $x=-1$ only. Hence it is natural to introduce a suitable cut-off.
Let $\eta\in C^\infty(\R)$
\begin{equation}
\label{eq:50}
  \eta(x)=
  \begin{cases}
    1 & x<-\frac{1}{2} \\
    0 & x>0
  \end{cases}
  \,.
\end{equation}
Thus, we set
\begin{equation}
  \label{eq:51}
\tilde{\phi}(x)=\eta(x)\,\hat \phi_0(x)\,.
\end{equation}
Note that 
\begin{equation}
\label{eq:52}
  \tilde{\phi}(\pm1)=\tilde{\phi}^\prime(\pm1)=0\,,
\end{equation}
and consequently $\tilde{\phi}\in D(\B^\D_{\lambda_0(\beta),\alpha,\beta})$. \\
Recall that $\tilde{\phi}$ is independent of $\alpha$ (see
\eqref{eq:39}).  

It remains necessary to estimate the error.  To this
end we write, with $\lambda_0$ introduced in \eqref{eq:12}, and $\phi_0 =\hat
\phi_0$
\begin{displaymath}
   \B_{\lambda_0,\alpha,\beta} (\eta \phi_0) = \eta \,\B_{\lambda_0,\alpha,\beta}( \phi_0) + [ \eta, \B_{\lambda_0,\alpha,\beta}] \phi_0\,.
\end{displaymath}
   Given that the support of the derivatives of $\eta$ is contained in
$[-1/2,0]$ and by \eqref{eq:44} and \eqref{eq:49} it holds that:
\begin{displaymath}
    \B_{\lambda_0,\alpha,\beta} (\eta \phi_0) = \eta  \B_{\lambda_0,\alpha,\beta}( \phi_0) +  
    \phi_0\,\big[\eta^{(4)}+i\beta(U+i\lambda)\eta^{\prime\prime}-\alpha^2\eta^{\prime\prime}\big] +\mathcal O ( e^{-\gamma\beta^{1/3}/2})\,.
 \end{displaymath}
By \eqref{eq:47} it holds that
\begin{displaymath}
  \|\phi_0\,[\eta^{(4)}+i\beta(U+i\lambda)\eta^{\prime\prime}-\alpha^2\eta^{\prime\prime}]\|_2\leq C\beta^{1/3}
\end{displaymath}
 It remains necessary to estimate the error in $L^2(-1,1)$ due to the
 replacement of $U$
 by $(1+x)$, which is given, by \eqref{eq:39}, \eqref{eq:30} and
 \eqref{eq:47} by
   \begin{displaymath}
     \beta \,  \|   (U-1-x)v_0-U^{\prime\prime}\phi_0\| \leq   C\, \beta^{1/6} + \hat C \beta^{1/3}\,,
   \end{displaymath}
   and finally the error due to $\alpha^2$ which, using \eqref{eq:40} and
   \eqref{eq:47} obeys the following bound
      \begin{displaymath}
         \|  \alpha^2 \Big[\beta  (U+i\lambda_0) \phi_0 + v_0)\Big] \| \leq  C \beta^{1/3}\,.
      \end{displaymath}
Combining the above yields
\begin{equation}
\label{eq:53}
  \|\B_{\lambda_0,\alpha,\beta}\tilde{\phi}\|_2\leq C\,\beta^{1/3} \,,
\end{equation}
and hence by \eqref{eq:40}, 
\begin{displaymath}
    \|\B_{\lambda_0,\alpha,\beta}\tilde{\phi}\|_2\leq C\,\beta^{1/2}\,\|v_0\|_2 \,. 
\end{displaymath}
By \eqref{eq:48}, we can conclude that $\tilde{\phi}$ satisfies 
\eqref{eq:goal}.

\section{Preliminaries}
\label{sec:3}

In this section we obtain some estimates that will become useful in 
Section 5 where we prove the existence of the critical value of
$\B_{\lambda,\alpha,\beta}$ in the vicinity of $\lambda_0(\beta)$.   We begin by briefly
recalling some Hardy inequalities, as we did in \cite{AH-ems}. Then,
we obtain some additional resolvent estimates for the Dirichlet realization of the 
differential Schr\"odinger operator introduced in \eqref{eq:7}.

\subsection{Hardy's inequality}
\label{sec:3.1}

We begin by recalling  \cite[Lemma 2.2.3]{AH-ems}
\begin{lemma}
   Let $w\in H^1(a,b)$ satisfy $w(b)=0$. Then,  we have
\begin{equation}
\label{eq:54}
  \|([x-a]w)^\prime\|_2^2 =\|[x-a]w^\prime\|_2^2\geq \frac{1}{4}\, \|w\|_2^2 \,.
\end{equation}
\end{lemma}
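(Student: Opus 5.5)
The plan is to derive both the identity and the inequality from a single integration by parts of $(x-a)\,\frac{d}{dx}|w|^2$. The key points are that the boundary term vanishes at $x=b$ because $w(b)=0$, and at $x=a$ because of the weight $(x-a)$. Throughout, $w$ may be complex valued, and $\|\cdot\|_2$ denotes the norm in $L^2(a,b)$.

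\emph{Preliminary.} Since $w\in H^1(a,b)$ on a bounded interval, $w$ is absolutely continuous and $|w|^2$ is absolutely continuous, with $(|w|^2)^\prime = 2\Re(\bar w\, w^\prime)$ almost everywhere. Hence
\begin{equation*}
  \int_a^b (x-a)\,(|w|^2)^\prime\,dx=\big[(x-a)|w|^2\big]_a^b-\int_a^b|w|^2\,dx=-\|w\|_2^2\,.
\end{equation*}
The boundary term vanishes at $b$ since $w(b)=0$, and at $a$ because of the factor $(x-a)$. This gives the basic relation
\begin{equation*}
  2\Re\int_a^b (x-a)\,w^\prime\,\bar w\,dx=-\|w\|_2^2\,.
\end{equation*}
If needed, one can first argue for smooth $w$ and conclude by density, since both sides are continuous in $H^1$.

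\emph{The equality.} Expand $([x-a]w)^\prime=w+(x-a)w^\prime$, so that
\begin{equation*}
  \|([x-a]w)^\prime\|_2^2=\|w\|_2^2+\|(x-a)w^\prime\|_2^2+2\Re\int_a^b (x-a)w^\prime\bar w\,dx\,.
\end{equation*}
By the basic relation the last term equals $-\|w\|_2^2$. It cancels the first term, which yields $\|([x-a]w)^\prime\|_2^2=\|(x-a)w^\prime\|_2^2$.

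\emph{The inequality.} Apply the Cauchy--Schwarz inequality to the basic relation:
\begin{equation*}
  \|w\|_2^2=-2\Re\int_a^b (x-a)w^\prime\bar w\,dx\leq 2\,\|(x-a)w^\prime\|_2\,\|w\|_2\,.
\end{equation*}
If $\|w\|_2=0$ there is nothing to prove. Otherwise, dividing by $\|w\|_2$ gives $\|w\|_2\leq 2\|(x-a)w^\prime\|_2$, and squaring gives the stated bound $\|(x-a)w^\prime\|_2^2\geq \frac14\|w\|_2^2$.

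There is no real obstacle here. The only point requiring a little care is justifying that the boundary term at $x=a$ vanishes. This follows from the continuity of $w$ up to $a$, which holds because $H^1(a,b)\subset C([a,b])$ on a bounded interval, so $(x-a)|w|^2\to0$ as $x\to a$.
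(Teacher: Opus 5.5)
Your proof is correct: the single integration by parts $\int_a^b (x-a)(|w|^2)'\,dx=-\|w\|_2^2$ gives both the identity (after expanding $([x-a]w)'=w+(x-a)w'$) and the inequality (via Cauchy--Schwarz); the boundary terms vanish because $w(b)=0$ and because of the weight $(x-a)$ at $x=a$. The paper does not prove this lemma itself but quotes it from \cite[Lemma 2.2.3]{AH-ems}, and your argument is the standard proof of that statement, so nothing is missing.
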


If we drop the requirement that $w(b)=0$ we can still state the
following \cite[Lemma 2.2.4]{AH-ems}
\begin{lemma}
   Let $w\in H^1(a,b)$. Then,  we have
\begin{equation}
\label{eq:55}
  \|([x-a]w)^\prime\|_2^2\geq \frac{1}{4}\, \|w\|_2^2 \,.
\end{equation}
\end{lemma}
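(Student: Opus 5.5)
The plan is to reduce the statement to the classical one-dimensional Hardy inequality for a function vanishing at the left endpoint. The point is that the boundary term at the right endpoint $x=b$ has a favourable sign, so no condition at $b$ is needed.

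Set $v=(x-a)w$. Since $w\in H^1(a,b)\subset C([a,b])$, we have $v\in H^1(a,b)$ with $v(a)=0$, and $w=v/(x-a)$ on $(a,b]$. The claim \eqref{eq:55} then reads
\begin{displaymath}
  \int_a^b \frac{|v|^2}{(x-a)^2}\,dx\leq 4\int_a^b|v^\prime|^2\,dx\,.
\end{displaymath}

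To prove this, I would first work on $(a+\delta,b)$ for small $\delta>0$ and integrate by parts using $\frac{d}{dx}\big(-(x-a)^{-1}\big)=(x-a)^{-2}$:
\begin{displaymath}
  \int_{a+\delta}^b \frac{|v|^2}{(x-a)^2}\,dx=\frac{|v(a+\delta)|^2}{\delta}-\frac{|v(b)|^2}{b-a}+2\,\Re\int_{a+\delta}^b\frac{\bar v\, v^\prime}{x-a}\,dx\,.
\end{displaymath}
Each term is then handled separately.
\begin{itemize}
\item The boundary term at $b$ is nonpositive and can simply be dropped; this is exactly why the requirement $w(b)=0$ of the previous lemma is no longer needed.
\item For the term at $a+\delta$, Cauchy--Schwarz gives $|v(a+\delta)|^2\leq \delta\int_a^{a+\delta}|v^\prime|^2$, so $|v(a+\delta)|^2/\delta\to0$ as $\delta\to0$.
\item The remaining integral is bounded by Cauchy--Schwarz by $2X_\delta\|v^\prime\|_2$, where $X_\delta=\|v/(x-a)\|_{L^2(a+\delta,b)}$.
\end{itemize}

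Combining these gives $X_\delta^2\leq o(1)+2X_\delta\|v^\prime\|_2$. This bounds $X_\delta$ uniformly in $\delta$, and letting $\delta\to0$ (monotone convergence) yields $X\leq 2\|v^\prime\|_2$, i.e.\ \eqref{eq:55}.

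The only delicate point is this justification near $x=a$, where $w$ is merely continuous and $1/(x-a)$ is singular. It is handled by the $\delta$-truncation together with the vanishing of the boundary term $|v(a+\delta)|^2/\delta$, so I expect no real obstacle. For complex-valued $w$ the same computation goes through with $\Re(\bar v v^\prime)$, as written above.
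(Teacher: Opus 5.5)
Your proof is correct and complete. The substitution $v=(x-a)w$ turns the claim into the classical Hardy inequality $\|v/(x-a)\|_2\le 2\|v^\prime\|_2$ for $v\in H^1(a,b)$ with $v(a)=0$. The right-end boundary term $-|v(b)|^2/(b-a)$ has the favourable sign, and your $\delta$-truncation handles the left endpoint rigorously.

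The paper gives no proof of this lemma; it only recalls it from \cite[Lemma 2.2.4]{AH-ems}, so there is no in-text argument to match.

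For comparison, the most direct route stays with $w$ itself. Integrating $2\Re\int_a^b (x-a)\bar w w^\prime\,dx$ by parts gives the identity $\|([x-a]w)^\prime\|_2^2=\|(x-a)w^\prime\|_2^2+(b-a)|w(b)|^2$. The same integration by parts also gives $\|w\|_2^2\le (b-a)|w(b)|^2+2\|w\|_2\|(x-a)w^\prime\|_2$. Absorbing the cross term yields $\|w\|_2^2\le 2(b-a)|w(b)|^2+4\|(x-a)w^\prime\|_2^2\le 4\|([x-a]w)^\prime\|_2^2$. This makes the link with the preceding lemma \eqref{eq:54} (the case $w(b)=0$) explicit. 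Since $w\in H^1$ is bounded, the boundary term at $a$ vanishes and no truncation is needed.

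Your version buys something this route does not. It uses only $v\in H^1$ with $v(a)=0$, and it proves finiteness of $\|v/(x-a)\|_2$ rather than assuming it. That is exactly the form in which the inequality is invoked in \eqref{eq:57}, where $v/(1+x)$ need not lie in $H^1$; for example, $v=(1+x)^{3/4}$ gives $v/(1+x)=(1+x)^{-1/4}$, which is unbounded. So your argument covers that application directly, whereas the lemma as stated (with $w\in H^1$) does not literally apply there.
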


By \eqref{eq:54} and \eqref{eq:10} we may then conclude that for $w\in
H^1(-1,1)$ satisfying $w(1)=0$ we have
\begin{equation}
\label{eq:56}
  \|[U-U(-1)]w^\prime\|_2^2 \geq\frac{ \Xi}{4}\,\|w\|_2^2\,.
\end{equation}
Moreover, for any $v\in H^1(-1,1)$ we may also conclude, by \eqref{eq:55}, that as long as
$v(-1)=0$ it holds, for any $a\in(-1,1]$
\begin{equation}
  \label{eq:57}
\Big\|\frac{v}{1+x}\Big\|_2^2 \leq
4\,\Big\|\Big[(1+x)\frac{v}{1+x}\Big]^\prime\Big\|_2^2 =4\,\|v^\prime\|_2^2 \,.
\end{equation}

\subsection{Dirichlet Schr\"odinger estimates}
\label{sec:3.2}
We split the discussion in this section according to the interval on
which the Schr\"odinger operator is acting.
\subsubsection{Resolvent estimates in $[a,1]$}
Let $a\in[-1,0]$ and $I_a=(a,1)$.  In this subsection we consider the
operator $\LL_{\beta}^{\D,I_a}$ whose associate differential operator is
given by \eqref{eq:7} and whose domain is $H^2(a,1)\cap H^1_0(a,1)$.  We
begin with the following estimate
\begin{lemma}
  Let $\mu_0>0$ and $a\in[-1,0]$. Let further $U\in C^2([a,1])$ satisfy
   for
  some $\Xi >0$
  \begin{equation}
    \label{eq:58}
U(x)\geq U(a)+ \Xi \, (x-a) \,,
  \end{equation}
(see also \eqref{eq:10}).
  Then, there exists $\beta_0>0$ and $C>0$ such that for all
  $\beta\geq \beta_0$ and $\lambda=\mu+i\nu$ satisfying $\nu\leq U(a)-\beta^{-1/3}$ and
  $\mu\leq \mu_0\beta^{-1/3}$, it holds
that
   $\LL^{\D,I_a}_{\beta} $ is invertible and satisfies
  \begin{equation}
    \label{eq:59}
\|(\LL^{\D,I_a}_{\beta,U} -\beta\lambda)^{-1}\|_2+\big[\beta|U(a)-\nu|\big]^{1/2}\Big\|\frac{d}{dx}(\LL^{\D,I_a}_{\beta,U} -\beta\lambda)^{-1}\Big\|_2\leq C \beta^{-1} (U(a)-\nu)^{-1}  \,.
  \end{equation}
\end{lemma}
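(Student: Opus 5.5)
The plan is a direct energy (Lax--Milgram type) argument for $u=(\LL^{\D,I_a}_{\beta,U}-\beta\lambda)^{-1}f$ with $f\in L^2(I_a)$. Write $d:=U(a)-\nu\ge\beta^{-1/3}$. Since $\beta\lambda=\beta\mu+i\beta\nu$, the equation reads
\begin{displaymath}
  -u''+i\beta\,(U-\nu)\,u-\beta\mu\, u=f\,,\qquad u(a)=u(1)=0\,.
\end{displaymath}
I would first prove the a priori estimates below for $u\in H^2(a,1)\cap H^1_0(a,1)$. Invertibility then follows: $\LL^{\D,I_a}_{\beta}-\beta\lambda$ is a relatively compact perturbation of the Dirichlet Laplacian, so it is Fredholm of index $0$, and the a priori bound gives injectivity.

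\emph{Step 1 (imaginary part).} Take the $L^2$ inner product of the equation with $u$ and integrate by parts, which is allowed by the Dirichlet conditions. The imaginary part gives $\beta\int_a^1(U-\nu)|u|^2\,dx=\Im\langle f,u\rangle$. By \eqref{eq:58} we have $U(x)-\nu\ge U(a)-\nu+\Xi(x-a)\ge d>0$ on $I_a$. Hence $\beta d\,\|u\|_2^2\le\|f\|_2\|u\|_2$, that is,
\begin{displaymath}
  \|u\|_2\le \beta^{-1}d^{-1}\|f\|_2\,.
\end{displaymath}
This is exactly the first term in \eqref{eq:59}. The weight $\Xi(x-a)$ is not needed here; positivity of $U-\nu$ suffices.

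\emph{Step 2 (real part).} The real part of the same identity gives $\|u'\|_2^2=\beta\mu\|u\|_2^2+\Re\langle f,u\rangle$. Using $\mu\le\mu_0\beta^{-1/3}$ and Step 1,
\begin{displaymath}
  \|u'\|_2^2\le \mu_0\beta^{2/3}\beta^{-2}d^{-2}\|f\|_2^2+\beta^{-1}d^{-1}\|f\|_2^2
  =\beta^{-1}d^{-1}\big(1+\mu_0\beta^{-1/3}d^{-1}\big)\|f\|_2^2\,.
\end{displaymath}
The constraint $d\ge\beta^{-1/3}$ is used precisely here, since it gives $\beta^{-1/3}d^{-1}\le1$. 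Therefore $(\beta d)^{1/2}\|u'\|_2\le(1+\mu_0)^{1/2}\|f\|_2$.

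\emph{Main obstacle: combining the two terms.} For the derivative term, the natural conclusion of Steps 1--2 is $(\beta d)^{1/2}\|u'\|_2\le C\|f\|_2$. This agrees with the claimed right-hand side $C\beta^{-1}d^{-1}$ in the critical regime $d\approx\beta^{-1/3}$, where everything has Airy scale. It also suffices for all later uses with $d\lesssim\beta^{-1/3}$ up to constants. For $d$ of order one, however, the derivative bound cannot be improved to $\|u'\|_2\le C(\beta d)^{-3/2}\|f\|_2$. A right-hand side $f$ oscillating at frequency $\beta^{1/2}$ produces $u\approx f/(i\beta(U-\nu))$ with $\|u'\|\sim\beta^{-1/2}\|f\|$. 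So in \eqref{eq:59} I would read the sum as stating that $\|(\LL-\beta\lambda)^{-1}\|\le C\beta^{-1}d^{-1}$ and $[\beta d]^{1/2}\|\tfrac{d}{dx}(\LL-\beta\lambda)^{-1}\|\le C$. These two bounds coincide with the stated bound in the relevant range $\beta^{-1/3}\le d\le C\beta^{-1/3}$, and that is what I would record.

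If a sharper form valid for larger $d$ were needed, I would first try multiplying the equation by $(U-\nu)^{-1}\bar u$. Hardy's inequality \eqref{eq:56} would then control the $U'$ commutator term. Even so, I expect this route to yield only the $(\beta d)^{-1/2}$ scaling for $\|u'\|$.
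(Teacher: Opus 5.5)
Your Steps 1--2 and the Fredholm-index argument are exactly the paper's proof: the identity \eqref{eq:60}, then \eqref{eq:61} and \eqref{eq:62}. Your reading of the derivative term is also the intended one. The factor $[\beta|U(a)-\nu|]^{1/2}$ in \eqref{eq:59} is a typo for $[\beta|U(a)-\nu|]^{-1/2}$. This is confirmed by \eqref{eq:62} and by the application in the proof of \eqref{eq:74}, which produces (\ref{eq:81}b) with $U(a)-\nu\approx\beta^{-1/6}$.

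Two remarks in your commentary are inaccurate, however.

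First, the literal bound does not coincide with yours at $d\approx\beta^{-1/3}$. There it would force $\|u'\|_2\le C\beta^{-1}\|f\|_2$, whereas you get $C\beta^{-1/3}\|f\|_2$. In fact your oscillation example at frequency $\beta^{1/2}$ refutes the literal bound for all $d\ge\beta^{-1/3}$, not only for $d\sim1$.

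Second, the derivative bound is used later at $d\approx\beta^{-1/6}$, not $d\lesssim\beta^{-1/3}$. At that scale, $\|u'\|_2\le C(\beta d)^{-1/2}\|f\|_2$ is precisely what is needed.
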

\begin{proof}
  Integration by parts yields for any $u\in D(\LL_{\beta,U}^{\D,(a,1)})$ 
  \begin{equation}
\label{eq:60}
    \Im\langle u,(\LL_{\beta,U} -\beta\lambda)u\rangle= \beta\langle u,(U-\nu)u\rangle \,,
  \end{equation}
where $\langle\cdot,\cdot\rangle$ denotes the standard product in $L^2(a,1)$:
\begin{displaymath}
  \langle f,g\rangle=\int_a^1f(x)\bar{g}(x)\,dx
\end{displaymath}
From  \eqref{eq:58} and the fact that $\nu<U(a)$  we can conclude that
\begin{displaymath}
  \beta[U(a)-\nu]\,\|u\|_2^2\leq \|(\LL_{\beta,U} -\beta\lambda)u\|_2\,\|u\|_2 \,,
\end{displaymath}
from which we conclude that
\begin{equation}
  \label{eq:61}
\|u\|_2\leq \beta^{-1} \, |U(a)-\nu|^{-1} \,\|(\LL_{\beta,U} -\beta\lambda)u\|_2
\end{equation}
To obtain an estimate for $u^\prime$ we use the identity
\begin{displaymath} 
   \Re\langle u,(\LL_{\beta,U} -\beta\lambda)u\rangle=\|u^\prime\|_2^2- \mu\beta\|u\|_2^2 \,.
\end{displaymath}
Using \eqref{eq:61} and the assumed bounds $\mu\leq\mu_0\,\beta^{-1/3}$ and
$U(a)-\nu>\beta^{-1/3}$ yields
\begin{equation}
\label{eq:62}
  \|u^\prime\|_2\leq C\, [\beta|U(a)-\nu|]^{-1/2}\,\|(\LL_{\beta,U} -\beta\lambda)u\|_2\,.
\end{equation}
Given that for $\beta=0$ the Fredholm index of $\LL^{\D,I_a}_{\beta,U}$ is
zero, it follows that its index vanishes for all $\beta>0$, and hence by
\eqref{eq:61} we may conclude its bijectivity and \eqref{eq:59}. 
\end{proof}
\subsubsection{Resolvent estimates in $(-1,+\infty)$}
\label{sec:dirichlet-infinite}
We begin the discussion by considering the case $\beta=1$ on the interval $(0,+\infty)$ before
using translation and dilation to extend the validity of the results to
our case of interest .  Hence, we seek to obtain first some resolvent estimates for
a Dirichlet Schr\"odinger on $\R_+$. Let then $\LL_+$ be given by
\begin{displaymath}
  \LL_+=-\frac{d^2}{dx^2}+ix
\end{displaymath}
and be defined on  $H^2(\R_+)\cap H^1_0(\R_+)$. It is well known (see
\cite{al08,hen14}) 
that
\begin{displaymath}
  \sigma(\LL_+)=\{\nu_k\}_{k=1}^\infty  \,,
\end{displaymath}
where $\arg \nu_k=\pi/3$ for all $k\geq1$ and $|\nu_k|\to\infty\,$. 
\begin{lemma}
  Let $\Upsilon>0\,$. Then, there exists $C>0$ such that, for all $\lambda\in\C \setminus \sigma(\LL_+)$,
  satisfying $\Re\lambda\leq \Upsilon$ it holds that
  \begin{equation}
    \label{eq:63}
\|(\LL_+-\lambda)^{-1}\|_{\LL(L^2(\mathbb R_+))}+
\Big\|\frac{d}{dx}\,(\LL_+-\lambda)^{-1}\Big\|_{\LL(L^2(\mathbb R_+))} \leq
C\Big[1+\frac{1}{d(\lambda,\sigma(\LL_+))}\Big] \,. 
  \end{equation}
 Furthermore, let $\nu_0>0$, then there exists $C>0$ such that for all $\lambda$ such that 
$\Im\lambda<\nu_0$  and $\Re\lambda\leq \Upsilon$, it holds that
\begin{equation}
\label{eq:64}
  \|(\LL_+-\lambda)^{-1}\|_{\LL(L^\infty(\mathbb R_+),L^2(\mathbb R_+))}\leq
  C\Big(1+\frac{1}{d(\lambda,\sigma(\LL_+))}\Big)[1+
  \log_+^{1/2} d^{-1}(\lambda,\sigma(\LL_+))] \,. 
\end{equation}
where $\log_+x=\max(0,\log x)$.
\end{lemma}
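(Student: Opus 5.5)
We prove \eqref{eq:63} and \eqref{eq:64} through the explicit Airy representation of the resolvent of $\LL_+$, combined with a compactness argument on bounded sets and an energy estimate at infinity.

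\emph{Proof of \eqref{eq:63}.} Split the admissible region $\{\Re\lambda\le\Upsilon\}$ into two parts: a large ball $|\lambda|\le R_0$, and its complement within the half-plane.

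\emph{Bounded $\lambda$.} On the ball $|\lambda|\le R_0$, the resolvent $(\LL_+-\lambda)^{-1}$ is meromorphic with simple poles at the $\nu_k$. Simplicity follows from the explicit Green's function $G(x,y;\lambda)=\pi\,\mathrm{Ai}(e^{i\pi/6}(x_<+i\lambda))\,\widetilde{\mathrm{Ai}}(x_>)/(\text{Wronskian}\cdot \mathrm{Ai}(e^{2i\pi/3}\lambda))$, where $\widetilde{\mathrm{Ai}}$ is the combination vanishing at $0$, and from the fact that the zeros of $\mathrm{Ai}$ are simple. Hence $\|(\LL_+-\lambda)^{-1}\|\le C(1+d^{-1})$ on this compact set, with $d=d(\lambda,\sigma(\LL_+))$.

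\emph{Large $\lambda$.} Away from the ball there are two regimes.
\begin{itemize}
\item If $\Re\lambda\le\Upsilon$ and $\Im\lambda\to-\infty$, use the identity $\Im\langle u,(\LL_+-\lambda)u\rangle=\langle u,(x-\Im\lambda)u\rangle$, exactly as in \eqref{eq:60}.
\item If $\Re\lambda\to-\infty$, use $\Re\langle u,(\LL_+-\lambda)u\rangle=\|u'\|^2-\Re\lambda\|u\|^2$.
\end{itemize}
For $\Im\lambda\to+\infty$ with $\Re\lambda\le\Upsilon$, bounded real part is far from the ray $\arg=\pi/3$. I would invoke the known uniform resolvent bound for the complex Airy operator there: by translation $x\mapsto x-\Im\lambda$, the bound reduces to the whole-line operator $-\partial^2+ix$, whose resolvent is bounded for $\Re\lambda\le\Upsilon$ (cf. \cite{hen14}), plus a boundary-layer correction. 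This regime is the main obstacle; I would handle it with a partition of unity separating $x<\Im\lambda/2$, where the energy identity gives control, from the rest, where the whole-line comparison applies.

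\emph{Derivative bound.} The estimate on $\frac{d}{dx}(\LL_+-\lambda)^{-1}$ follows from the real-part identity, $\|u'\|^2\le\|f\|\|u\|+\Upsilon\|u\|^2$.

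\emph{Proof of \eqref{eq:64}.} For $f\in L^\infty(\R_+)$ and $\Im\lambda<\nu_0$, decompose $f=\chi f+(1-\chi)f$ with $\chi=\mathbf 1_{[0,X]}$. The first piece has $\|\chi f\|_2\le X^{1/2}\|f\|_\infty$, so \eqref{eq:63} gives a contribution $C(1+d^{-1})X^{1/2}\|f\|_\infty$. For the tail $x>X\ge 2\nu_0$, the potential dominates. Write $u=(\LL_+-\lambda)^{-1}((1-\chi)f)$ and use a weighted energy estimate: multiply by $\bar u$ and use $\Im$, which gives $\|(x-\Im\lambda)^{1/2}u\|^2\le|\langle (1-\chi)f,u\rangle|$, and bound the right-hand side through Cauchy–Schwarz with weight $x^{-1/2}$ on $f$. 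The term $\|x^{-1/2}\mathbf 1_{x>X}\|_2$ diverges logarithmically, so the estimate must be applied with a cutoff at scale $d^{-1}$ beyond which exponential Airy decay takes over. Equivalently, one can estimate directly with the Green's function: $|G(x,y)|$ decays like $e^{-c|x-y|\,x^{1/2}}/x^{1/2}$ for large $x$, so $\sup_x\int|G|dy\lesssim 1/x$ and the $L^2$ norm of $\int G f$ is bounded by $\|f\|_\infty$ times an $L^2$ norm of $1/x$-type behaviour. Choosing $X\sim\max(1,\log d^{-1})$ and balancing the pole contribution $d^{-1}$ against these integrals produces the factor $[1+\log_+^{1/2}d^{-1}]$.
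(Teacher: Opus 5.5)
Your proof of \eqref{eq:63} takes a different route from the paper, which simply cites \cite{he11}, and it works. The ingredients are compactness plus simple poles on a large ball, coercivity of the real or imaginary part when $\Re\lambda\to-\infty$ or $\Im\lambda\to-\infty$, and a partition at $x\sim\Im\lambda/2$ together with the translation-invariant whole-line resolvent when $\Im\lambda\to+\infty$. One slip: in your Green's function, the solution vanishing at $0$ belongs at $x_<$ and $\mathrm{Ai}(e^{i\pi/6}(x+i\lambda))$ at $x_>$. The denominator is just the Wronskian, which is proportional to $\mathrm{Ai}(e^{2i\pi/3}\lambda)$.

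The genuine gap is in \eqref{eq:64}, in the tail term $u=(\LL_+-\lambda)^{-1}((1-\chi)f)$. Write $d=d(\lambda,\sigma(\LL_+))$.

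\begin{itemize}
\item The function $u$ is not supported in $\{x>X\}$, and the global identity $\Im\langle u,(\LL_+-\lambda)u\rangle=\int_0^\infty(x-\Im\lambda)|u|^2\,dx$ is indefinite as soon as $\Im\lambda>0$. That is exactly the regime near $\sigma(\LL_+)$, since every $\nu_k$ lies on $\arg z=\pi/3$. So the bound $\|(x-\Im\lambda)^{1/2}u\|^2\le|\langle(1-\chi)f,u\rangle|$ is not available. Moreover, the far part of $f$ still couples to the eigenmode through a coefficient carrying $d^{-1}$.
\item The Green's function bound you quote is not uniform in $\lambda$, because the kernel carries the factor $1/\mathrm{Ai}(e^{2i\pi/3}\lambda)$.
\item The ``exponential Airy decay beyond scale $d^{-1}$'' has no basis: $u$ decays only like $|f|/x$.
\item Nothing in your argument decays in $X$ and has to beat $d^{-1}$. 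So the choice $X\sim\log d^{-1}$ and the factor $\log^{1/2}$ are read off from the statement rather than derived.
\end{itemize}

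What is missing is quantitative control of how the far field leaks into the region where the resolvent has size $d^{-1}$. The paper gets this from \emph{localized} identities on windows $[n-1,n+1]$ with $n>\nu_0+1$, where $x-\Im\lambda>0$.
\begin{itemize}
\item Combining imaginary and real parts on each window gives $\|\chi_nv\|_2^2\le C(n^{-2}\|g\|_\infty^2+n^{-1}\|\chi_n'v\|_2^2)$.
\item Iterating this yields the factorially small tail \eqref{eq:69}.
\item Then \eqref{eq:63} is applied to $\eta_Nv$. The commutator terms live in $[N,2N]$ and are factorially small multiples of $\|\mathbf 1_{[0,N]}v\|_2$, so they can be absorbed once $N\ge K(1+\log_+d^{-1})$.
\item The $\log^{1/2}$ comes from $\|\eta_N g\|_2\le(2N)^{1/2}\|g\|_\infty$.
\end{itemize}

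Alternatively, your Green's function idea can be repaired by splitting the Dirichlet kernel into two pieces. The first is the whole-line kernel, which has no poles and is uniformly controlled. The second is a rank-one term $\kappa(\lambda)\,\mathrm{Ai}(e^{i\pi/6}(x+i\lambda))\,\mathrm{Ai}(e^{i\pi/6}(y+i\lambda))$, with $\kappa(\lambda)$ proportional to $\mathrm{Ai}(e^{-2i\pi/3}\lambda)/\mathrm{Ai}(e^{2i\pi/3}\lambda)$. This isolates the pole and should give $C(1+d^{-1})$ near the spectrum even without the logarithm. But that decomposition, and the estimates that make it uniform over the region, are exactly what your proposal leaves out.
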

\begin{proof}
  The proof of \eqref{eq:63} has first been established in
  \cite{he11}. 
  
To prove \eqref{eq:64},  let $(\chi_n)_{n\in \mathbb Z}$ be a sequence of  $C^\infty$ functions such that
\begin{displaymath}
\sum_{n =-\infty}^{+\infty} \chi_n^2 = 1\,,\, {\rm supp \, \chi_n} \subset [n-1,n+1] \mbox{ and
} |\chi^\prime_n|+ |\chi^{\prime\prime}_n|\leq \hat C\,.
\end{displaymath}
Let $(\lambda,v,g)\in \{\C \setminus \sigma(\LL_+)\} \times  D(\LL_+)\times L^\infty(\R_+)$ satisfy
\begin{equation}
\label{eq:65}
  (\LL_+-\lambda)v=g\,.
\end{equation}
Taking the inner product of \eqref{eq:65}  with $\chi_n^2v$ yields, for $n>N > \nu_0 +1$ where
$\nu=\Im\lambda <  \nu_0$, for the imaginary part
\begin{multline*}
  \|(x-\nu)^{1/2}\chi_nv\|_2^2+2\, \Im\langle\chi_n^\prime v,\chi_nv^\prime\rangle=\Im\langle\chi_nv,\chi_ng\rangle\leq \\
\leq   \frac{n-1-\nu}{2}\|\chi_nv\|_2^2+\frac{1}{2(n-1-\nu)}\|\chi_ng\|_2^2 \,,
\end{multline*}
from which we conclude that, for $n>N > \nu_0 +1$ ,
\begin{equation}
\label{eq:66}
  \|\chi_nv\|_2^2\leq C\Big(\frac{1}{n^2}\|\chi_n g\|_\infty^2+
  \frac{1}{n}\|\chi_n^\prime v\|_2\,\|\chi_nv^\prime\|_2\Big)\,. 
\end{equation}

From the real part of the same inner product we obtain that
\begin{displaymath}
  \|(\chi_nv)^\prime\|_2^2-\|\chi_n^\prime v\|_2^2=\mu\|\chi_nv\|_2^2+\Re\langle\chi_nv,\chi_ng\rangle\,,
\end{displaymath}
where $\mu=\Re\lambda$, yielding 
\begin{displaymath}
   \|(\chi_nv)^\prime\|_2^2\leq \|\chi_n^\prime v\|_2^2+\mu\|\chi_nv\|_2^2+\frac 12 \big(n\|\chi_nv\|_2^2
   +\frac{1}{n}\|\chi_ng\|_2^2\big)\,. 
\end{displaymath}
With the aid of \eqref{eq:66} and the condition $\mu \leq \Upsilon$, the above yields
\begin{equation*}
   \|(\chi_nv)^\prime\|_2^2\leq C_1 \Big(\|\chi_n^\prime v\|_2^2+n^{-1}\|g\|_\infty^2\Big)\,,
\end{equation*}
and then
\begin{equation}
\label{eq:67}
   \|\chi_n\, v^\prime\|_2^2\leq C_2 \Big(\|\chi_n^\prime v\|_2^2+n^{-1}\|g\|_\infty^2\Big)\,. 
\end{equation}
Substituting into \eqref{eq:66} yields
\begin{displaymath}
    \|\chi_nv\|_2^2\leq C\, \Big(\frac{1}{n^2}\|g\|_\infty^2+
  \frac{1}{n}\|\chi_n^\prime v\|_2^2\Big)\,. 
\end{displaymath}
 Summing over $n$ between $N$ and $\infty$ yields
\begin{equation}
\label{eq:68}
  \|{\bf 1}_{[N,\infty)}v\|_2^2\leq C N^{-1} \big(\|g\|_\infty^2+ \|{\bf 1}_{[N-1,N+1]}v\|_2^2\big)\,.
\end{equation}
 Given that, for $N >\nu_0+2\,$, 
\begin{displaymath}
  \|{\bf 1}_{[N-1,N+1]}v\|_2^2 \leq\|{\bf 1}_{[N-1,\infty)}v\|_2^2\leq C (N-1)^{-1} \big(\|g\|_\infty^2+ \|{\bf 1}_{[N-2,N]}v\|_2^2\big)\,,
\end{displaymath}
we may conclude from \eqref{eq:68} that
\begin{displaymath}
  \|{\bf 1}_{[N,\infty)}v\|_2^2\leq C \Big(N^{-1} \|g\|_\infty^2+ C[N(N-1)]^{-1}\|{\bf 1}_{[N-2,N]}v\|_2^2\Big)\,.
\end{displaymath}
We may recursively proceed that above process, to obtain for any
$N\geq L>\nu+1$
\begin{equation}
\label{eq:69}
    \|{\bf 1}_{[N,\infty)}v\|_2^2\leq CN^{-1} \|g\|_\infty^2+\frac{C^{N-L +1}(L-1)!}{N!}\|{\bf 1}_{[L-1,L+1]}v\|_2^2\,.
\end{equation}

We continue with the estimate of ${\bf 1}_{[0,N]}v$. We  choose $\eta_N$ such that
\begin{displaymath}
\eta_N=1 \mbox{ on } (-\infty,N) \,,\, {\rm supp \,} \eta_N \subset (-\infty, 2N)\, , |\eta^\prime_N| + N |\eta^{\prime\prime}_N| \leq \hat C N^{-1} \,.
\end{displaymath}
By \eqref{eq:65} it holds that
\begin{displaymath}
   (\LL_+-\lambda)(\eta_Nv)=2\eta_N^\prime v^\prime+\eta_N^{\prime\prime}v+ \eta_N g \,.
\end{displaymath}
From \eqref{eq:63} we then obtain that
\begin{equation}
\label{eq:70}
  \|\eta_Nv\|_2\leq  C\, \Big[1+\frac{1}{d(\lambda,\sigma(\LL_+))}\Big] \,\Big(N^{-1}\|{\bf 1}_{[N,2N]}v^\prime\|_2+N^{-2}\|{\bf
    1}_{[N,2N]}v\|_2+  N^{1/2} \|g\|_\infty \Big) \,,
\end{equation}
To estimate ${\bf 1}_{[N,2N]}v^\prime\,,$ we use \eqref{eq:67}, which after
summation over $n$  between $N$ and $2N$ yields
\begin{equation}\label{eq:71}
  \|{\bf 1}_{[N,2N]}v^\prime\|_2^2\leq C\,\big[\|{\bf 1}_{[N-1,2N+1]}v\|_2^2+\|g\|_\infty\big] \,.
\end{equation}

Combining \eqref{eq:71}  with \eqref{eq:70} yields
\begin{displaymath}
    \|\eta_Nv\|_2\leq C \, \Big[1+\frac{1}{d(\lambda,\sigma(\LL_+))}\Big]  \big(N^{-1}\|{\bf
    1}_{[N-1,2N+1]}v\|_2+{N}^{1/2}\|g\|_\infty\big) \,,
\end{displaymath}
from which we immediately obtain
\begin{displaymath}
    \|{\bf 1}_{[0,N]}v\|_2\leq C \, \Big[1+\frac{1}{d(\lambda,\sigma(\LL_+))}\Big] (N^{-1}\|{\bf
    1}_{[N-1,\infty]}v\|_2+{N^{1/2}}\|g\|_\infty) \,.
\end{displaymath}
Together with \eqref{eq:69} (with $N$ replaced by $N-1$ ) the above
inequality yields  
\begin{displaymath} 
  \|{\bf 1}_{[0,N]}v\|_2\leq C \,  \Big[1+\frac{1}{d(\lambda,\sigma(\LL_+))}\Big]
  \Big( N^{1/2}\|g\|_\infty +  \Big[\frac{C^{N-L}L!}{N!}\Big]^{1/2}\|{\bf 1}_{[L-1,L+1]}v\|_2\Big)\,,
\end{displaymath}

For any fixed $L>\nu+1$, it follows that there exists $K$ such that, if 
\begin{displaymath}
  N>K\big[1+\log_+d^{-1}(\lambda,\sigma(\LL_+))\big]\,,
\end{displaymath}
then
\begin{displaymath}
 C  \Big[\frac{C^{N-L}L!}{(N-1)!}\Big]^{1/2}\Big[1+\frac{1}{d(\lambda,\sigma(\LL_+))}\Big]\leq
  \frac{1}{2} \,,
\end{displaymath}
 and we obtain that
\begin{equation}\label{eq:60aa}
  \|{\bf 1}_{[0,N]}v\|_2\leq C \,  \Big[1+\frac{1}{d(\lambda,\sigma(\LL_+))}\Big]
  \log^{1/2}\Big(1+\frac{1}{d(\lambda,\sigma(\LL_+))}\Big)  \,  \|g\|_\infty \,.
\end{equation}
Combining  \eqref{eq:60aa} with \eqref{eq:68} yields \eqref{eq:64}. 
\end{proof}

An immediate corollary,  using dilation and
translation,  follows. 
\begin{corollary}
  Let $\LL_{0,\beta}^{\D,(-1,\infty)}$ be the operator  associated with the same differential
  operator as for $\LL_{0,\beta}$ in \eqref{eq:17} and with domain
\begin{displaymath}
  D(\LL_{0,\beta}^{\D,(-1,\infty)})=\{u\in H^2(-1,\infty)\cap H^1_0(-1,\infty)\,|\,xu\in L^2(-1,\infty)\}\,.
\end{displaymath}
Then, for any $\Upsilon>0$ there exists $C>0$ such that
\begin{multline}
  \label{eq:72}
\sup_{\Re\lambda\leq\Upsilon\beta^{-1/3}}[\|(\LL_{0,\beta}^{\D,(-1,\infty)}-\beta\lambda)^{-1}\|
+\beta^{-1/3}\|\frac{d}{dx} (\LL_{0,\beta}^{\D,(-1,\infty)}-\beta\lambda)^{-1}\|] \\
\leq  C\beta^{-2/3}\Big[1+ \frac{1}{d(\beta^{1/3}\lambda,\sigma(\LL_+))}\Big]\,.
\end{multline}
 Furthermore, for any $\nu_0>0$ it holds that
\begin{multline}
  \label{eq:73}
\sup_{
  \begin{subarray}{c}
    \Re\lambda\leq\Upsilon\beta^{-1/3} \\
   \Im\lambda\leq\nu_0\beta^{-1/3} 
  \end{subarray}}
\|(\LL_{0,\beta}^{\D,(-1,\infty)}-\beta\lambda)^{-1}\|_{\LL(L^\infty,L^2)}  \\  \leq C\beta^{-5/6}\Big[1+ \frac{1}{d(\beta^{1/3}\lambda,\sigma(\LL_+))}\Big][1+
  \log_+^{1/2} d^{-1}(\beta^{1/3}\lambda,\sigma(\LL_+))]\,.
\end{multline}
\end{corollary}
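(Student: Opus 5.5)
The plan is to deduce the corollary from the preceding lemma, with $\lambda$ there replaced by $\beta^{1/3}\lambda$, by an affine change of variables. Set $y=\beta^{1/3}(1+x)$, so that $x\in(-1,\infty)$ corresponds to $y\in\R_+$. For $u$ in $D(\LL_{0,\beta}^{\D,(-1,\infty)})$ write $u(x)=w(y)$. Then
\begin{displaymath}
  (\LL_{0,\beta}-\beta\lambda)u=\beta^{2/3}\big[(\LL_+-\beta^{1/3}\lambda)w\big](y)\,,
\end{displaymath}
since $-\frac{d^2}{dx^2}=-\beta^{2/3}\frac{d^2}{dy^2}$ and $i\beta(1+x)=i\beta^{2/3}y$. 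Under this map the domain conditions ($H^2\cap H^1_0$, and $xu\in L^2$, which is equivalent to $yw\in L^2$) correspond exactly to those of $\LL_+$. Hence $\LL_{0,\beta}^{\D,(-1,\infty)}-\beta\lambda$ is invertible if and only if $\beta^{1/3}\lambda\notin\sigma(\LL_+)$, and
\begin{displaymath}
  (\LL_{0,\beta}^{\D,(-1,\infty)}-\beta\lambda)^{-1}=\beta^{-2/3}\,T_\beta\,(\LL_+-\beta^{1/3}\lambda)^{-1}\,T_\beta^{-1}\,,
\end{displaymath}
where $(T_\beta w)(x)=w(\beta^{1/3}(1+x))$.

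Next I would record how $T_\beta$ acts on the relevant norms. We have $\|T_\beta w\|_{L^2(-1,\infty)}=\beta^{-1/6}\|w\|_{L^2(\R_+)}$, and the same factor appears in $\|T_\beta^{-1}f\|_2=\beta^{1/6}\|f\|_2$, so the two cancel in $L^2\to L^2$ operator norms. The sup norm is invariant: $\|T_\beta^{-1}g\|_\infty=\|g\|_\infty$. Derivatives scale as $\frac{d}{dx}T_\beta=\beta^{1/3}T_\beta\frac{d}{dy}$.

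With these, the condition $\Re\lambda\le\Upsilon\beta^{-1/3}$ becomes $\Re(\beta^{1/3}\lambda)\le\Upsilon$, so \eqref{eq:63} applies. It gives the $L^2$ resolvent bound $\beta^{-2/3}C[1+d^{-1}(\beta^{1/3}\lambda,\sigma(\LL_+))]$. For the derivative, the extra factor $\beta^{1/3}$ is compensated by the weight $\beta^{-1/3}$ in \eqref{eq:72}, and this yields \eqref{eq:72}.

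For \eqref{eq:73}, the additional condition $\Im\lambda\le\nu_0\beta^{-1/3}$ becomes $\Im(\beta^{1/3}\lambda)\le\nu_0$, so \eqref{eq:64} applies. The $L^\infty\to L^2$ norm picks up $\beta^{-2/3}$ from the conjugation. Because the sup norm is invariant while $\|T_\beta w\|_2=\beta^{-1/6}\|w\|_2$, it also picks up a further $\beta^{-1/6}$, giving the total $\beta^{-5/6}$ with the logarithmic factor unchanged.

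There is no genuine obstacle here. The only point needing care is the bookkeeping of these powers of $\beta$, namely the $\beta^{2/3}$ from the operator, the $\beta^{\pm1/6}$ from the $L^2$ norms and the $\beta^{1/3}$ from the derivative, together with checking that the domains correspond exactly under $T_\beta$.
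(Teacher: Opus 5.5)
Your proposal is correct and matches the paper, which states the corollary as an immediate consequence of \eqref{eq:63}--\eqref{eq:64} via the translation and dilation $y=\beta^{1/3}(1+x)$; your bookkeeping of the factors $\beta^{-2/3}$, $\beta^{\pm1/6}$ and $\beta^{1/3}$ is right. One cosmetic point: \eqref{eq:64} is stated for $\Im\lambda<\nu_0$, so to cover the boundary case $\Im(\beta^{1/3}\lambda)=\nu_0$ you should apply it with $2\nu_0$ in place of $\nu_0$.
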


\subsubsection{Resolvent estimates in $(-1,+1)$}
Given  $U\in C^2([-1,1])$ satisfying \eqref{eq:9} and $U(-1)=0$,  we now consider  and denote by  $ \LL_{\beta,U}^{\D,(-1,1)}$ the Dirichlet realization in $(-1,+1)$  of the differential operator
\begin{displaymath}
  \LL_{\beta,U}=-\frac{d^2}{dx^2}+i\beta U \,,
\end{displaymath}
defined on $H^2(-1,1)\cap H^1_0(-1,1)$. \\
For some positive $r$ and $b$ let 
\begin{displaymath}
  \Sg_{b,r} =\{z\in\C \,| \,\max(\Re z,|\Im z|)\leq b \text{ and }d(z,\sigma(\LL_+))\geq r\}\,.
\end{displaymath}
We can now use \eqref{eq:72} and \eqref{eq:73} to establish
that (see \cite[Eq. (5.8)]{AH-arma} and  \cite[Eq. (5.42)]{AH-arma})
\begin{lemma}  For any $\Upsilon>0$
  and $r>0$ there exist positive $C$  and $\beta_0$ such that, 
    for all $\beta \geq \beta_0$ and  $\lambda\in\Sg_{\Upsilon\beta^{-1/3},r\beta^{-1/3} }$\,,  the
    operator $\LL_{\beta,U}^{\D,(-1,1)} -\beta\lambda$ is invertible and satisfies 
  \begin{equation}
  \label{eq:74}
\|(   \LL_{\beta,U}^{\D,(-1,1)}-\beta\lambda)^{-1}\| +\beta^{-1/3}\, \Big\|\frac{d}{dx} (
\LL_{\beta,U}^{\D,(-1,1)}-\beta\lambda)^{-1}\Big\| \leq  C\beta^{-2/3}\,. 
\end{equation}
Similarly,  for any $\Upsilon>0$
  and $r>0$, there exists $C>0$ and $\beta_0$ such that,  for any $g\in
L^\infty(-1,1)$  and any $\beta \geq \beta_0$,
\begin{equation}
\label{eq:75}
  \sup_{\lambda\in\Sg_{\Upsilon\beta^{-1/3},r\beta^{-1/3}}} \|(
  \LL_{\beta,U}^{\D,(-1,1)}-\beta\lambda)^{-1}g\|_1\leq C \, \min
  \Big(\frac{\log\beta}{\beta}\|g\|_\infty,\beta^{-5/6}\|g\|_2\Big)\,. 
\end{equation}
\end{lemma}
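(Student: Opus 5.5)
The plan is to localize near $x=-1$, where $U$ is close to the model $1+x$, and to treat the rest of $(-1,1)$ by the estimate on $[a,1]$ proved above.

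\emph{Near $x=-1$.} Fix $\delta$ with $\beta^{-1/3}\ll\delta\ll1$, say $\delta=\beta^{-1/4}$. On $[-1,-1+2\delta]$ write
\[
U=(1+x)+\OO((1+x)^2),
\]
so that $U$ is a perturbation of the model potential. Let $u=(\LL_{\beta,U}^{\D,(-1,1)}-\beta\lambda)^{-1}g$.

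\emph{Partition of unity.} Take $\chi_1^2+\chi_2^2=1$, with $\chi_1$ supported in $[-1,-1+2\delta)$ and $\chi_2$ supported in $(-1+\delta,1]$, and with $|\chi_j^{(k)}|\le C\delta^{-k}$.

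\emph{The piece $\chi_1u$.} Extend $\chi_1u$ by zero to $(-1,\infty)$. It satisfies
\[
(\LL_{0,\beta}^{\D,(-1,\infty)}-\beta\lambda)(\chi_1u)=\chi_1g+[\chi_1,-\partial_x^2]u-i\beta(U-1-x)\chi_1u .
\]
Apply \eqref{eq:72}. Since $\lambda\in\Sg$, we have $d(\beta^{1/3}\lambda,\sigma(\LL_+))\ge r$, so this gives the factor $C\beta^{-2/3}$. The perturbation term contributes at most $C\beta^{1/3}\delta^2\|\chi_1u\|$, which is $o(1)\,\|\chi_1u\|$ because $\beta^{1/3}\delta^2\to0$. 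The commutator terms contribute at most $\beta^{-2/3}\delta^{-1}\|u'\|+\beta^{-2/3}\delta^{-2}\|u\|$.

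\emph{The piece $\chi_2u$.} Here $U\ge\Xi\delta$ by \eqref{eq:10}, while $|\Im\lambda|\le\Upsilon\beta^{-1/3}\ll\delta$. I would not invoke the lemma on $[a,1]$ verbatim. Instead I would use its proof: take the imaginary part of $\langle\chi_2^2u,(\LL-\beta\lambda)u\rangle$ to get
\[
\beta\delta\,\|\chi_2u\|^2\lesssim\|g\|\,\|\chi_2u\|+\text{commutator terms}.
\]
This gives $\|\chi_2u\|\le C(\beta\delta)^{-1}\|g\|$ plus commutator terms, and $(\beta\delta)^{-1}\ll\beta^{-2/3}$. Alternatively one can use the weighted identity $\Im\langle u,(\LL-\beta\lambda)u\rangle=\beta\langle(U-\nu)u,u\rangle$ for an a priori bound on $\|\sqrt{(U-\nu)_+}\,u\|$.

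\emph{Closing the estimates.} The derivative bound comes from the real part,
\[
\|u'\|^2\le\beta|\mu|\,\|u\|^2+\|g\|\,\|u\|\lesssim\beta^{2/3}\|u\|^2+\|g\|\,\|u\|.
\]
Feeding this in, the commutator terms are $\beta^{-2/3}\delta^{-1}\beta^{1/3}\|u\|$ and $\beta^{-2/3}\delta^{-2}\|u\|$, both small multiples of $\|u\|$ when $\delta\gg\beta^{-1/3}$; with $\delta=\beta^{-1/4}$ the factors are $\beta^{-1/12}$ and $\beta^{-1/6}$. Absorbing them yields \eqref{eq:74}. Invertibility follows because the operator is Fredholm of index zero and injective.

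\emph{The bound \eqref{eq:75}.} For the $\beta^{-5/6}\|g\|_2$ part, write $\|u\|_1$ as the sum of the two localized pieces. For $\chi_1u$, supported in an interval of length $2\delta$, Cauchy--Schwarz gives $\|\chi_1u\|_1\le(2\delta)^{1/2}\|\chi_1u\|_2$, which is not good enough. I would therefore rather use the decay of the Dirichlet resolvent kernel on scale $\beta^{-1/3}$ through a weighted estimate, in the spirit of \eqref{eq:69}. Concretely, bound $\|(1+\beta^{1/3}(1+x))\chi_1u\|_2\lesssim\beta^{-2/3}\|g\|$, then apply Cauchy--Schwarz against the weight $(1+\beta^{1/3}(1+x))^{-1}$, whose $L^2$ norm is $\sim\beta^{-1/6}$. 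This gives $\beta^{-5/6}\|g\|_2$. For $\chi_2u$, the weighted imaginary-part identity gives $\|(U-\nu)^{1/2}u\|^2\le\beta^{-1}|\langle g,u\rangle|$. Hence $\|\chi_2u\|_1$ is bounded by $\beta^{-1}\|(U-\nu)^{-1}\chi_2g\|_1$ (plus commutator terms) via the local inversion $u\approx g/(i\beta(U-\nu))$. That is $O(\beta^{-1}\log\beta\,\|g\|_\infty)$ and is also acceptable for the $L^2$ bound.

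\emph{The $\log\beta$ term.} For the $\frac{\log\beta}{\beta}\|g\|_\infty$ part, near $-1$ use \eqref{eq:73}: the $L^\infty\to L^2$ norm is $\beta^{-5/6}$ on an effective support of size $\beta^{-1/3}$, which gives $\beta^{-1}$ in $L^1$. Away from $-1$, the bound $\int_{-1+\delta}^1\frac{dx}{\beta(U-\nu)}\sim\beta^{-1}\log(1/\delta)\sim\beta^{-1}\log\beta$ produces the logarithm.

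\emph{Expected main obstacle.} The hardest step is the $L^1$ bound away from the boundary with only a logarithmic loss. There one must justify that the resolvent acts like multiplication by $(i\beta(U-\nu))^{-1}$ up to controllable error. I would first try a local weighted $L^2$ estimate with weight $(U-\nu)$, summed dyadically over the scales $U-\nu\sim2^k\beta^{-1/3}$.
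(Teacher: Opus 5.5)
Your argument for \eqref{eq:74} is sound and is essentially the paper's: cut off near $-1$, compare with the model through \eqref{eq:72}, and use the imaginary part away from $-1$. The paper takes $\delta=\hat\delta_0\beta^{-1/6}$ and quotes \eqref{eq:59} on $(-1+\delta,1)$. One small correction: in the real-part bound write $\mu_+$ instead of $|\mu|$, since $\Re\lambda$ is unbounded below on $\Sg_{\Upsilon\beta^{-1/3},r\beta^{-1/3}}$. Your weighted near-boundary argument for the $\beta^{-5/6}\|g\|_2$ part of \eqref{eq:75} also works.

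\textbf{The gap: the $\frac{\log\beta}{\beta}\|g\|_\infty$ bound.} This part of \eqref{eq:75} is not proved, and the far piece of the $L^2$ bound rests on the same unjustified step.
\begin{itemize}
\item The ``local inversion'' $u\approx g/(i\beta(U-\nu))$ is not an estimate. Write $u=(i\beta(U-\nu))^{-1}(g+u''+\beta\mu u)$. For general $g\in L^\infty$ the term $u''$ is as large as $g$, so the step assumes what it should prove.
\item The claim that \eqref{eq:73} yields $\beta^{-1}$ in $L^1$ near $-1$, because of an ``effective support of size $\beta^{-1/3}$'', is also wrong. With $L^\infty$ data, $u$ decays only like $(\beta(1+x))^{-1}$. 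On $[-1,-1+2\delta]$, Cauchy--Schwarz therefore gives only $\delta^{1/2}\beta^{-5/6}\|g\|_\infty$.
\end{itemize}

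\textbf{The missing idea.} What is needed is a global weighted Cauchy--Schwarz followed by absorption; no partition of unity is required. The imaginary-part identity gives $\beta\,|\int(U-\nu)|u|^2|\le\|g\|_\infty\|u\|_1$. Since $(U-\nu)_-\le\Upsilon\beta^{-1/3}$, you get
\[
\|u\|_1\le\Big\|\big(|U-\nu|+\beta^{-1/3}\big)^{-1/2}\Big\|_2\,\Big\|\big(|U-\nu|+\beta^{-1/3}\big)^{1/2}u\Big\|_2\le C\log^{1/2}\beta\,\big(\beta^{-1/2}\|g\|_\infty^{1/2}\|u\|_1^{1/2}+\beta^{-1/6}\|u\|_2\big),
\]
where \eqref{eq:83} bounds the first factor. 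Absorbing the $\|u\|_1^{1/2}$ term gives $\|u\|_1\le C\beta^{-1}\log\beta\,\|g\|_\infty$, provided $\|u\|_2\le C\beta^{-5/6}\|g\|_\infty$.

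For the $L^2$ data the paper also works globally. The multiplier $(U-\nu)u$ together with \eqref{eq:74} gives $\|(U-\nu)u\|_2\le C\beta^{-1}\|g\|_2$. Cauchy--Schwarz against $(U-\nu+i\beta^{-1/3})^{-1}$, whose $L^2$ norm is $O(\beta^{1/6})$, then gives $\beta^{-5/6}\|g\|_2$.

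\textbf{Where your choice of $\delta$ matters.} The auxiliary bound $\|u\|_2\le C\beta^{-5/6}\|g\|_\infty$ is where $\delta=\beta^{-1/4}$ fails.
\begin{itemize}
\item \eqref{eq:73} handles the source $\chi_1 g$.
\item The crude imaginary-part estimate on the far piece gives $(\beta\delta)^{-1}\|g\|_2$.
\item The commutator term $\beta^{-2/3}\delta^{-1}\|u'\|$, with $\|u'\|\le C\beta^{-1/3}\|g\|_2$, gives $\beta^{-1}\delta^{-1}\|g\|_2$.
\end{itemize}
For $\delta=\beta^{-1/4}$ both of the last two are of order $\beta^{-3/4}$. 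They become $O(\beta^{-5/6})$ only when $\delta\gtrsim\beta^{-1/6}$. This is why the paper takes $\delta=\hat\delta_0\beta^{-1/6}$ with $\hat\delta_0$ small, which still keeps $\beta^{1/3}\delta^2$ small.
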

\begin{proof}~\\
{\bf Proof of \eqref{eq:74}}\\
  Let $\eta\in C^\infty(\R)$ be given by \eqref{eq:50}, and set for some $\delta>0$,
  \begin{subequations}\label{eq:76}
 \begin{equation}
  \eta_\delta(x)=\eta\Big(\frac{1+x}{2\delta}-1\Big)\,.
 \end{equation}
Hence, we have
\begin{equation}
 \eta_\delta = 1 \mbox{ for } x<-1+\delta \mbox{ and } \eta_\delta=0\mbox{ for } x > -1 + 2\delta\,.
\end{equation}
\end{subequations}
 Let $\lambda\in\Sg_{\Upsilon\beta^{-1/3},r\beta^{-1/3}}$ and $(v,g)\in D(   \LL_{\beta,U}^{\D,(-1,1)})\times L^2(-1,1)$ satisfy
\begin{equation}
\label{eq:77}
  (   \LL_{\beta,U}-\beta\lambda)v=g \,.
\end{equation}
Clearly, $\eta_\delta v\in D(\LL_{0,\beta}^{\D,(-1,\infty)})$ and we may write
\begin{displaymath}
   ( \LL_{0,\beta}-\beta\lambda)(\eta_\delta v)=-i\beta[U-(1+x)]\eta_\delta v-2\eta_\delta^\prime v^\prime-\eta_\delta^{\prime\prime}v+\eta_\delta g \,.
\end{displaymath}
By \eqref{eq:72} we then have
\begin{displaymath}
  \|\eta_\delta v\|_2 +\beta^{-1/3}\|(\eta_\delta v)^\prime\|_2
  \leq C\, \Big[\beta^{1/3}\delta^2\|\eta_\delta v\|_2+\beta^{-2/3}(\delta^{-1}\|v^\prime\|_2+\delta^{-2}\|v\|_2+\|\eta_\delta g\|_2)\Big]\,.
\end{displaymath}
For $\delta^2\beta^{1/3}\leq(2C)^{-1}$ we then obtain 
\begin{equation}
\label{eq:78}
   \|\eta_\delta v\|_2 +\beta^{-1/3}\|(\eta_\delta v)^\prime\|_2
  \leq \widehat C\beta^{-2/3}(\delta^{-1}\|v^\prime\|_2+\delta^{-2}\|v\|_2+\|\eta_\delta g\|_2)\,.
\end{equation}
   We now write,
\begin{displaymath}
  \eta_\delta v=v_1+v_2 \,,
\end{displaymath}
with $v_1,v_2 \in D(\LL_{\beta,U}^{\D,(-1,+\infty)})$ satisfying
\begin{displaymath}
    ( \LL_{0,\beta}-\beta\lambda)v_1=-i\beta[U-(1+x)]\eta_\delta v-2\eta_\delta^\prime v^\prime-\eta_\delta^{\prime\prime}v
\end{displaymath}
and
\begin{displaymath}
    ( \LL_{0,\beta}-\beta\lambda)v_2=\eta_\delta g\,.
\end{displaymath}
 By \eqref{eq:73} and the fact that $\lambda\in\Sg_{\Upsilon\beta^{-1/3},r\beta^{-1/3} }$ it holds   that
\begin{displaymath}
  \|v_2\|_2\leq C\beta^{-5/6}\|\eta_\delta g\|_\infty \,,
\end{displaymath}
whereas for $v_1$ we obtain, as in \eqref{eq:78}, 
\begin{displaymath} 
   \|v_1\|_2 \leq \widehat C\beta^{-2/3}(\delta^{-1}\|v^\prime\|_2+\delta^{-2}\|v\|_2) + \check C \beta^{1/3} \delta^2 \|\eta_\delta v\|_2\,.
\end{displaymath}
Combining the above yields, for sufficiently
small $\delta^2\beta^{1/3}$, 
   \begin{equation}
\label{eq:79}
     \|\eta_\delta v\|_2 \leq \check C\,\Big [\beta^{-2/3}\big(\delta^{-1}\|v^\prime\|_2+\delta^{-2}\|v\|_2\big)+\beta^{-5/6}\|\eta_\delta g\|_\infty\Big]\,.
  \end{equation}
~\\

Let $\tilde{\eta}_\delta=1-\eta_\delta$. Here we observe that $\tilde{\eta}_\delta v \in
D(\LL_{\beta,U}^{\D, ( a,1)} )$  with $a=-1+\delta$ and a  simple computation
yields, 
\begin{displaymath}
  (   \LL_{\beta,U}-\beta\lambda)(\tilde{\eta}_\delta v)=2\eta_\delta^\prime v^\prime+\eta_\delta^{\prime\prime}v+\tilde{\eta}_\delta g \,.
\end{displaymath}
 We now choose 
 \begin{equation}
\label{eq:80}
\delta =\hat \delta_0\beta^{-1/6} \mbox{  with } \hat \delta_0 \mbox{ small enough}.
 \end{equation}
 Note that $U(-1+\delta)\approx\beta^{-1/6}$ and hence, since $\nu\lesssim\beta^{-1/3}$, we may
 use \eqref{eq:59} to obtain  
\begin{equation*}
    \|\tilde{\eta}_\delta v\|_2 +\big[\beta|U(-1+\delta)-\nu|\big]^{-1/2}\| (\tilde{\eta}_\delta v)^\prime\|_2
  \leq\frac{C}{\beta|U(-1+\delta)-\nu|}(\delta^{-1}\|v^\prime\|_2+\delta^{-2}\|v\|_2+\|\tilde{\eta}_\delta g\|_2)\,.
\end{equation*}
The above inequality yields in view of \eqref{eq:80}
\begin{subequations}\label{eq:81}
\begin{equation}
  \|\tilde{\eta}_\delta v\|_2 \leq C\, \beta^{-5/6} \big(\delta^{-1}\|v^\prime\|_2+\delta^{-2}\|v\|_2+\|\tilde{\eta}_\delta g\|_2\big)\,,
\end{equation}
and
\begin{equation}
\| (\tilde{\eta}_\delta v)^\prime\|_2 \leq C \beta^{-5/12}  \big(\delta^{-1}\|v^\prime\|_2+\delta^{-2}\|v\|_2+\|\tilde{\eta}_\delta g\|_2\big)\,.
\end{equation}
\end{subequations}
Combining \eqref{eq:81}  with \eqref{eq:78} yields, in view of \eqref{eq:80},
\begin{subequations}
\label{eq:82}
  \begin{equation}
  \|v\|_2 \leq C\beta^{-2/3}\big(\delta^{-1}\|v^\prime\|_2+\delta^{-2}\|v\|_2+\|g\|_2\big) \,,
\end{equation}
and
\begin{equation}
  \|v^\prime\|_2\leq C\beta^{-1/3}\big(\delta^{-1}\|v^\prime\|_2+\delta^{-2}\|v\|_2+\|g\|_2\big) \,.
\end{equation}
The inequality (\ref{eq:82}b)  gives for $\beta$ large enough
\begin{equation}
 \|v^\prime\|_2\leq C\beta^{-1/3}\big(\delta^{-2}\|v\|_2+\|g\|_2\big) \,.
\end{equation}
Substituting (\ref{eq:82}c) into (\ref{eq:82}a) yields for
sufficiently large $\beta$ 
\begin{equation}
\|v\|_2 \leq C\beta^{-2/3} \|g\|_2  \,.
\end{equation}
Combining (\ref{eq:82}d) and (\ref{eq:82}c)  leads to 
\begin{equation}
 \|v^\prime\|_2\leq C\beta^{-1/3} \|g\|_2 \,.
\end{equation}
\end{subequations}
From (\ref{eq:82}d,e) we may conclude the existence of $\beta_0$
such that for  all $\beta \geq \beta_0$ \eqref{eq:74} holds true.

{\bf Proof of \eqref{eq:75}\\}
 To prove \eqref{eq:75} we first observe, that for $0\leq\nu\leq\gamma\beta^{-1/3}$
there exists a unique $x_\nu\in[-1,1]$ such that $U(x_\nu)=\nu\,$. For $\nu<0$ we
set $ x_\nu=-1\,$. Furthermore, 
it holds by \eqref{eq:9} that for some $0<\Xi<1$,
\begin{equation}
\label{eq:83}
  |U-\nu|\geq\Xi\, |x-x_\nu|\,.
\end{equation}
  By \eqref{eq:83} it holds that
\begin{equation*} 
   \|(U-\nu+i \beta^{-1/3})^{-1}\|_2^2 \leq
   C \, \beta^{1/3} \,. 
\end{equation*}
 We may then conclude that
  \begin{equation}
\label{eq:84}
  \begin{array}{ll}
    \|v\|_1 &\leq \|(U-\nu+i\beta^{-1/3})^{-1}\|_2
    \|(U-\nu+i\beta^{-1/3})v\|_2\\ & \leq C\beta^{1/6}\, \big[\|(U-\nu)v\|_2+\beta^{-1/3}\|v\|_2\big]\,.
    \end{array}
  \end{equation}
Taking the inner product of \eqref{eq:77} by $(U-\nu) v$ and integrating by
parts yields for the imaginary parts
\begin{displaymath}
  \beta\|(U-\nu)v\|_2^2+\Im\langle U^\prime v,v^\prime\rangle=\Im\langle(U-\nu)v,g\rangle\,, 
\end{displaymath}
which together with \eqref{eq:74} leads to
\begin{equation}
\label{eq:85}
  \|(U-\nu)v\|_2\leq  C \beta^{-1} \,\|g\|_2 \,.
\end{equation}
Substituting  the above into \eqref{eq:84} we obtain, with the aid of
\eqref{eq:74}
\begin{equation}
  \label{eq:86}
\|v\|_1 \leq C\beta^{-5/6}\|g\|_2\,.
\end{equation}
To complete the proof of \eqref{eq:75}, we use \eqref{eq:83} to obtain
that
\begin{equation*}
\|(U-\nu+  i \beta^{-1/3})^{-1/2}\|_2  \leq    \|(\Xi \,[x-x_\nu]+i
\beta^{-1/3})^{-1}\|_1 \leq C\, \log\beta \,.
\end{equation*}
Then, we write
\begin{equation}
\label{eq:87}
\begin{array}{ll}
    \|v\|_1&  \leq \|(U-\nu+\beta^{-1/3})^{-1/2}\|_2
    \|(U-\nu+i\beta^{-1/3})^{1/2}v\|_2\\ &
    \leq C\,\log \beta \, \big[\||U-\nu|^{1/2}v\|_2+\beta^{-1/6}\|v\|_2\big]\,.
    \end{array}
\end{equation}
Given that $|\nu|\leq\Upsilon\beta^{-1/3}$ it holds that
\begin{displaymath}
  \||U-\nu|^{1/2}v\|_2^2\leq \langle(U-\nu)v,v\rangle+C\beta^{-1/3}\|v\|_2^2 \,.
\end{displaymath}
By \eqref{eq:60} (with $a=-1$)  it holds that 
\begin{displaymath}
  \langle(U-\nu)v,v\rangle\leq \beta^{-1}\, \|v\|_1\|g\|_\infty \,,
\end{displaymath}
and hence
\begin{displaymath}
  \||U-\nu|^{1/2}v\|_2^2\leq C\,\big[\beta^{-1} \|v\|_1\|g\|_\infty +\beta^{-1/3}\|v\|_2^2\big]\,.
\end{displaymath}
Substituting the above into \eqref{eq:87} yields 
\begin{equation}
  \label{eq:88}
  \|v\|_1\leq C\,[\beta^{-1}\log \beta \, \|g\|_\infty +\beta^{-1/6} \log \beta \, \|v\|_2]\,.
\end{equation}
 To complete the proof we need to establish first that 
\begin{equation}
\label{eq:89}
  \|v\|_2\leq C\beta^{-5/6}\|g\|_\infty \,.
\end{equation}
To this end we use \eqref{eq:78}, \eqref{eq:79}, and \eqref{eq:81}. 
Substituting \eqref{eq:89} into \eqref{eq:88} yields for $\beta$ large enough 
\begin{displaymath}
  \|v\|_1\leq C\beta^{-1}\log \beta \, \|g\|_\infty \,,
\end{displaymath}
which together with \eqref{eq:86} establishes \eqref{eq:75}. 
\end{proof}

\section{A no-slip Schr\"odinger operator}
In the following, we refine \cite[Lemma 6.2]{AH-arma} to include the
case where 
\begin{displaymath}
 \delta_0^{-1}\beta^{-2/3}\leq |\lambda-\lambda_0(\beta)|\leq \delta_0\beta^{-1/3}\,,
\end{displaymath}
where $\delta_0>0$  is sufficiently small and $\lambda_0(\beta) \in\C$ is defined
 in \eqref{eq:12}. 

\subsection{No-slip Schr\"odinger on $\R_+$}

  Let $\tilde{\LL}_0$ be given by 
  \begin{displaymath}
    \tilde{\LL}_0=-\frac{d^2}{dx^2}+x
  \end{displaymath}
  where
  \begin{displaymath}
    D(\tilde{\LL}_0)=\{u\in H^2(\R_+)\,|\,\langle1,u\rangle=0\,,\; xu\in L^2(\R_+)\}\,.
  \end{displaymath}
We begin by proving that we can assume that the eigenfunctions  of
$\tilde{\LL}_0$ do not vanish on the real line
\begin{lemma}
\label{lem:no-zeroes}
  Let $(\lambda,\psi)$ denote an eigenpair of $\tilde{\LL}_0$.
 Then, $\lambda\not\in\R $ and $\psi(0)\neq0$.
\end{lemma}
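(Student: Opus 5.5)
The plan is to reduce both assertions to properties of the Airy function. By the analogue of \cite[Eq. (A.4)]{AH-arma} recalled in the introduction (here with $\beta=1$, the real potential $x$, and no shift of origin), an eigenfunction of $\tilde{\LL}_0$ associated with $\lambda$ is an $H^2(\R_+)$ solution of $-\psi^{\prime\prime}+x\psi=\lambda\psi$ with $x\psi\in L^2$. Hence $\psi(x)=C\,\Ai(x-\lambda)$ with $C\neq0$, and the constraint $\langle1,\psi\rangle=0$ becomes
\begin{displaymath}
  F(-\lambda)=0\,,\qquad F(z):=\int_z^{+\infty}\Ai(t)\,dt\,.
\end{displaymath}
Once $\lambda\notin\R$ is proved, the second claim is immediate. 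Indeed $\psi(0)=C\,\Ai(-\lambda)$, and all zeros of $\Ai$ lie on the negative real axis, so $\Ai(-\lambda)\neq0$ whenever $-\lambda\notin\R$. The whole proof therefore reduces to showing that $F(z)>0$ for every real $z$.

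For $z\geq0$ this holds because $\Ai>0$ on $[0,\infty)$. For $z=-t<0$ I set $g(t)=F(-t)=\tfrac13+\int_0^t w(s)\,ds$, where $w(s)=\Ai(-s)$ and $\int_0^\infty\Ai=\tfrac13$ is used. Then $g^\prime=w$ and $w^{\prime\prime}=-s\,w$, so $w$ oscillates with simple zeros $0<\tau_1<\tau_2<\cdots$, $\tau_k=|a_k|$. Consequently $g$ is increasing on $[0,\tau_1]$ and monotone between consecutive zeros of $w$, with local minima exactly at $\tau_{2k}$. Writing $I_k=\bigl|\int_{\tau_{k-1}}^{\tau_k}w\bigr|$ (with $\tau_0$ replaced by $0$ for $k=1$), we have $g(\tau_{2k})=\tfrac13+I_1-I_2+I_3-\cdots-I_{2k}$. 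It thus suffices to show that $(I_k)_{k\geq2}$ is nonincreasing and that $g(\tau_2)=\tfrac13+I_1-I_2>0$. Then every partial sum ending with $-I_{2k}$ is at least $g(\tau_2)$, since the terms $I_{2j-1}-I_{2j}$ with $j\geq2$ are nonnegative.

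To prove that the hump areas decrease, I will use a Sturm comparison argument. The energy $Q(s)=w^2+s^{-1}(w^\prime)^2$ satisfies $Q^\prime=-s^{-2}(w^\prime)^2\leq0$, so the successive maxima $|w|$ on the humps decrease. Moreover, since the coefficient $s$ in $w^{\prime\prime}+s\,w=0$ increases, the distances $\tau_{k+1}-\tau_k$ between consecutive zeros decrease. To turn these two facts into $I_{k+1}\leq I_k$, I would compare the hump on $[\tau_k,\tau_{k+1}]$ with the hump on $[\tau_{k-1},\tau_k]$. One way is the change of variables mapping one hump onto the other, combined with the standard Sturm-type monotonicity; an alternative is to use the Liouville-Green form $w\approx\pi^{-1/2}s^{-1/4}\sin(\tfrac23 s^{3/2}+\tfrac\pi4)$, which gives $I_k\sim c\,\tau_k^{-3/4}$ with explicit error bounds for large $k$, leaving only finitely many $k$ to check directly.

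The remaining step is the numerical inequality $\tfrac13+I_1-I_2>0$, which I expect to be the main obstacle in making the argument fully rigorous. The values are classical: $\int_{-\infty}^0\Ai=\tfrac23$, $\tau_1\approx2.338$ and $\tau_2\approx4.088$. These give $g(\tau_1)\approx1.27$ and $g(\tau_2)\approx0.5$, which is comfortably positive. Justifying this rigorously requires verified quadrature or tabulated values of $\int_0^x\Ai(-t)\,dt$ (see \cite{abst72}), and I would cite those tables here. With $F>0$ on $\R$, no real $\lambda$ satisfies $F(-\lambda)=0$, which proves $\lambda\notin\R$ and hence also $\psi(0)\neq0$.
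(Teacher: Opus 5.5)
Your argument is correct in outline, but it takes a genuinely different route from the paper. The paper's proof never examines $\int_z^\infty\Ai(t)\,dt$ on the real axis. Assuming $\lambda\in\R$, it sets $\Psi(x)=\int_x^\infty\psi(y)\,dy$ and notes that $\Psi(0)=0$ is exactly the constraint $\langle1,\psi\rangle=0$. It then pairs $\bigl[-\frac{d^2}{dx^2}+(x-\lambda)\bigr]\Psi^\prime=0$ with $\Psi$. For real $\lambda$ the real part of this identity reads $-\frac12\|\Psi\|_2^2-\frac12|\Psi^\prime(0)|^2=0$, so $\psi\equiv0$. The claim $\psi(0)\neq0$ is then deduced exactly as you do, from the reality of the zeros of $\Ai$.

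The two routes buy different things. The paper's energy identity takes three lines, uses no Airy asymptotics and no numerics, and only uses that the potential is real and increasing. Your approach is Airy-specific and depends on a numerical inequality. In exchange it gives a quantitative description of $F$ on $\R$: the alternating arch structure and an explicit positive lower bound.

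As written, however, the monotonicity $I_{k+1}\leq I_k$ is only announced. Decreasing maxima and decreasing zero spacings do not by themselves imply decreasing arch areas, since the shape of the arch also matters. A short way to close this is the modified Pr\"ufer substitution $w=Q^{1/2}\sin\theta$, $w^\prime=s^{1/2}Q^{1/2}\cos\theta$. It gives $\theta^\prime=s^{1/2}+\frac{\sin 2\theta}{4s}$, and $I_k=\int Q^{1/2}|\sin\theta|\,d\theta/\theta^\prime$ over a $\theta$-interval of length $\pi$. Compare points with the same $\theta$ modulo $\pi$: the later arch has no larger $Q$ and, as long as $s\geq1$, a larger $\theta^\prime$. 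Hence $I_{k+1}<I_k$ for every arch contained in $[\tau_1,\infty)$.

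Finally, your value of $g(\tau_2)$ is off. Since $\int_0^{\tau_2}\Ai(-t)\,dt\approx0.47$, one gets $g(\tau_2)\approx0.80$ rather than $0.5$, which only widens your margin.
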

\begin{proof}
  The proof was essentially obtained in \cite{wa53}. We bring it here again for
  the convenience of the reader. Suppose that $\lambda\in\R$ and let 
  \begin{displaymath}
    \Psi(x)=\int_x^\infty \psi(y)\,dy \,.
  \end{displaymath}
Note that $\Psi(0)=0$. Obviously,
\begin{displaymath}
  \Big[-\frac{d^2}{dx^2}+(x-\lambda)\Big]\Psi^\prime=0\,.
\end{displaymath}
Multiplying by $\bar{\Psi}$ and integrating over $\R_+$ yields, for the real part,
\begin{displaymath}
  -\frac{1}{2}\|\Psi\|_2^2 - \frac{1}{2}|\Psi^\prime|^2(0)=0\,,
\end{displaymath}
which yields $\Psi\equiv0$, and hence $\lambda$ cannot be real. Since the zeroes of
Airy functions are all real we may the conclude that $\psi(0)\neq0$.
\end{proof}

 Let $\tilde{\lambda}_0\in\sigma(\tilde{\LL}_0)$. We continue  with the following auxiliary lemma:
\begin{lemma}
\label{lem:airy-bound}
Let $\tilde{\psi}_\lambda\in H^2(\R_+)$
  denote the unique  solution  (see \cite[10.4.59-10.4.63]{abst72})  of
  \begin{displaymath}
    \begin{cases}
        (\tilde{\LL}_0-\lambda)\tilde{\psi}_\lambda=0 &\mbox{for } x\in\R_+ \\
        \tilde{\psi}_\lambda(0)=1\,. &
    \end{cases}
  \end{displaymath}
Then, there exists positive $\delta$ and $C$ such that for all $|\lambda-\tilde{\lambda}_0|<\delta$
it holds that 
\begin{equation}
  \label{eq:90}
C^{-1}|\lambda-\tilde{\lambda}_0|\leq  |\langle1, \tilde{\psi}_\lambda\rangle|\leq C|\lambda-\tilde{\lambda}_0|\,.
\end{equation}
\end{lemma}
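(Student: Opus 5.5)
The plan is to write $\tilde\psi_\lambda$ explicitly in terms of the Airy function and reduce \eqref{eq:90} to the simplicity of the zero of an entire function of $\lambda$.

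\textbf{Step 1: explicit form of the solution.} The decaying solution of $-u''+(x-\lambda)u=0$ is ${\rm Ai}(x-\lambda)$. By Lemma \ref{lem:no-zeroes}, $\tilde\lambda_0\notin\R$. Since all zeroes of ${\rm Ai}$ are real, ${\rm Ai}(-\lambda)\neq0$ for $\lambda$ in a small disc around $\tilde\lambda_0$. Hence, for such $\lambda$,
\begin{displaymath}
  \tilde\psi_\lambda(x)=\frac{{\rm Ai}(x-\lambda)}{{\rm Ai}(-\lambda)}\,,\qquad
  \langle1,\tilde\psi_\lambda\rangle=\frac{F(\lambda)}{{\rm Ai}(-\lambda)}\,,\qquad
  F(\lambda):=\int_{-\lambda}^{+\infty}{\rm Ai}(t)\,dt\,.
\end{displaymath}
The integral defining $F$ is taken along the contour $t=s-\lambda$ with $s\in\R_+$. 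Because ${\rm Ai}$ decays superexponentially in the sector $|\arg t|<\pi/3$, $F$ is entire, and $F(\lambda)=\int_{-\lambda}^{\infty}{\rm Ai}$ along any path ending in that sector. Also $F'(\lambda)={\rm Ai}(-\lambda)$. The denominator ${\rm Ai}(-\lambda)$ is holomorphic and bounded above and below near $\tilde\lambda_0$ (shrinking $\delta$ if necessary), so it suffices to prove $c|\lambda-\tilde\lambda_0|\le|F(\lambda)|\le C|\lambda-\tilde\lambda_0|$.

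\textbf{Step 2: simple zero.} Since $(\tilde\lambda_0,\psi)$ is an eigenpair of $\tilde\LL_0$, the eigenfunction is a multiple of ${\rm Ai}(\cdot-\tilde\lambda_0)$ and satisfies $\langle1,\psi\rangle=0$, so $F(\tilde\lambda_0)=0$. Moreover $F'(\tilde\lambda_0)={\rm Ai}(-\tilde\lambda_0)\neq0$, either by the reality of the Airy zeroes or directly because $\psi(0)\neq0$ by Lemma \ref{lem:no-zeroes}. Thus $\tilde\lambda_0$ is a simple zero of the holomorphic function $F$, and we may write $F(\lambda)=(\lambda-\tilde\lambda_0)G(\lambda)$ with $G$ holomorphic and $G(\tilde\lambda_0)=F'(\tilde\lambda_0)\neq0$. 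By continuity, $|G|$ lies between $|G(\tilde\lambda_0)|/2$ and $2|G(\tilde\lambda_0)|$ on a disc of radius $\delta$. Combining this with Step 1 gives \eqref{eq:90}.

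\textbf{Main obstacle.} The only delicate point is justifying that $F$ is holomorphic, that differentiation under the integral is permitted, and that contour deformations are allowed. All of these follow from the uniform superexponential decay of ${\rm Ai}(s-\lambda)$ for $s\to+\infty$, locally uniformly in $\lambda$, given by \eqref{eq:asymptAiry}. This is also consistent with \eqref{eq:simplezero} under the rotation relating $\tilde\LL_0$ to $\LL_{0,1}$.
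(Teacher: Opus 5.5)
Your proposal is correct and follows essentially the same route as the paper: both write $\tilde\psi_\lambda(x)={\rm Ai}(x-\lambda)/{\rm Ai}(-\lambda)$ and reduce \eqref{eq:90} to the fact that $\lambda\mapsto\int_0^\infty{\rm Ai}(x-\lambda)\,dx$ vanishes at $\tilde\lambda_0$ with $\lambda$-derivative ${\rm Ai}(-\tilde\lambda_0)\neq0$. The paper carries this out through an explicit second-order Taylor expansion of ${\rm Ai}(x-\lambda)$ in $\lambda$, with integral remainder controlled by a uniform $L^1$ bound on ${\rm Ai}''$; you package the same computation as a simple zero of the entire function $F$, which is slightly cleaner.
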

\begin{proof}
  Let $ \tilde{\psi}_{\tilde \lambda_0}$ denote the eigenfunction of $\tilde{\LL}_0$
  associated with $\tilde{\lambda}_0$, normalized by setting
  $\tilde{\psi}_{\tilde \lambda_0}(0)=1$, which is possible in view of Lemma
  \ref{lem:no-zeroes} which reads $\Ai(-\tilde \lambda_0)\neq 0\,$.  More
  generally, for $\lambda$ sufficiently close to $\tilde \lambda_0$, we may write
  in terms of Airy's functions 
  \begin{displaymath}
     \tilde{\psi}_\lambda(x)=\frac{\Ai(x-\lambda)}{\Ai(-\lambda)}= \frac{\Ai\Big((x-\tilde \lambda_0) + (\tilde \lambda_0-\lambda)\Big) }{\Ai(-\lambda)} \,,
  \end{displaymath}
and hence we may write the Taylor expansion of $ \tilde{\psi}_\lambda(x) $ at $(x-\tilde \lambda_0)$ in the form
  \begin{equation}
\label{eq:91}
\begin{array}{ll}
    \tilde{\psi}_\lambda(x)& =\frac{1}{\Ai(-\lambda)}\Big[\Ai(x-\tilde{\lambda}_0) - (\lambda-\tilde \lambda_0)\Ai^\prime(x-\tilde{\lambda}_0)\\
    & \qquad  +
     \frac{1}{2}(\lambda-\tilde \lambda_0)^2\int_0^1 (1-u) \Ai^{\prime\prime}(x-\tilde{\lambda}_0+ (\lambda-\tilde \lambda_0)u)\,du\Big] \,.
  \end{array}
  \end{equation}
 Since $\tilde{\lambda}_0\in\sigma(\tilde{\LL}_0)$ we have that
$\tilde{\psi}_{\tilde{\lambda}_0}\in D(\tilde{\LL}_0)$ and hence
\begin{displaymath}
  \langle1, \tilde{\psi}_{\tilde{\lambda}_0}\rangle=0\,.
\end{displaymath}
Furthermore, it holds that
\begin{displaymath}
  \Big\langle1,\frac{(\lambda-\tilde \lambda_0)}{\Ai(-\lambda)}\Ai^\prime(x-\tilde{\lambda}_0)\Big\rangle=
  (\lambda-\tilde \lambda_0)\frac{\Ai(-\tilde \lambda_0)}{\Ai(-\lambda)}+\OO(|\lambda-\tilde{\lambda}_0|^2) \,. 
\end{displaymath}
Then, using the fact that Airy function satisfies  $\Ai^{\prime\prime}(y) = y\Ai(y)$ and its asymptotics
as $x\to\infty$ (as recalled in \eqref{eq:asymptAiry}) we can bound the
$L^1$ norm of $ \Ai^{\prime\prime}(x-\tilde{\lambda}_0+ (\lambda-\tilde \lambda_0)u)\,$ in
$\mathbb R_+$ uniformly for $u\in [0,1]$.  Consequently, the integration of
\eqref{eq:91} over $\R_+$ yields
\begin{equation}
\label{eq:92}
  \langle1, \tilde{\psi}_\lambda\rangle= (\lambda-\tilde \lambda_0)+\OO(|\lambda-\tilde \lambda_0|^2) \,,
\end{equation} 
from which \eqref{eq:90} easily follows.
\end{proof}
In the sequel we need to use \eqref{eq:90} for
$\tilde{\psi}_\lambda(e^{i\pi/6}\cdot)$ and
hence the following corollary 
\begin{corollary}
  Under the same conditions of Lemma \ref{lem:airy-bound} it holds
  that
  \begin{equation}
    \label{eq:93}
C^{-1}|\lambda-\tilde{\lambda}_0| \leq |\langle1, \tilde{\psi}_\lambda(e^{i\pi/6}\cdot)\rangle|\leq C\,|\lambda-\tilde{\lambda}_0| \,.
  \end{equation}
\end{corollary}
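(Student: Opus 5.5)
The idea is to reduce \eqref{eq:93} to Lemma \ref{lem:airy-bound} by a contour rotation.

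First I would identify $\tilde{\psi}_\lambda(e^{i\pi/6}x)$ explicitly. Since $\tilde{\psi}_\lambda(y)=\Ai(y-\lambda)/\Ai(-\lambda)$ is entire in $y$, the composite $x\mapsto\tilde{\psi}_\lambda(e^{i\pi/6}x)$ is well defined for $x\in\R_+$. Writing $z=e^{i\pi/6}x$, we have
\begin{displaymath}
  \langle1,\tilde{\psi}_\lambda(e^{i\pi/6}\cdot)\rangle=\int_0^\infty\tilde{\psi}_\lambda(e^{i\pi/6}x)\,dx = e^{-i\pi/6}\int_{\Gamma}\tilde{\psi}_\lambda(z)\,dz\,,
\end{displaymath}
where $\Gamma$ is the ray $\{e^{i\pi/6}x:\ x\ge0\}$. (The inner product may carry a complex conjugate; this only conjugates the result and does not affect moduli.)

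Next I would rotate $\Gamma$ onto $\R_+$ using Cauchy's theorem on the sector $\{0\le\arg z\le\pi/6,\ |z|\le R\}$. By \eqref{eq:asymptAiry}, for $|\arg z|\le\pi/6$ and $\lambda$ in a compact neighbourhood of $\tilde\lambda_0$,
\begin{displaymath}
  |\Ai(z-\lambda)|\le C\exp\big(-\tfrac23\Re (z-\lambda)^{3/2}\big)\,,
\end{displaymath}
and $\Re z^{3/2}\ge |z|^{3/2}\cos(\pi/4)$ in this sector. The arc contribution therefore decays like $R\,e^{-c R^{3/2}}$, uniformly in $\lambda$, and vanishes as $R\to\infty$. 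Hence
\begin{displaymath}
  \langle1,\tilde{\psi}_\lambda(e^{i\pi/6}\cdot)\rangle = e^{-i\pi/6}\langle1,\tilde{\psi}_\lambda\rangle\,.
\end{displaymath}
Since $|e^{-i\pi/6}|=1$, \eqref{eq:93} follows immediately from \eqref{eq:90} with the same $\delta$ and $C$.

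The only delicate point is that the exponential bound be uniform in $\lambda$ near $\tilde\lambda_0$ and valid on the whole closed sector. This holds because $\arg(z-\lambda)\to\arg z$ as $|z|\to\infty$, so the large-$|z|$ part stays inside $|\arg|<\pi$ where \eqref{eq:asymptAiry} applies. The bounded part of the sector is handled by continuity.

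If the intended meaning were instead the eigenfunction of the rotated operator $-d^2/dx^2+ix$ (the $\beta=1$ case of \eqref{eq:20}), the same rotation applies after the change of variables $y=e^{i\pi/6}x$. In that case $\lambda$ is replaced by a fixed unimodular multiple of it, which rescales distances to the eigenvalue only by a factor of modulus one.
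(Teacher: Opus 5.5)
Your proposal is correct and is essentially the paper's own argument. Like the paper, you deform the contour from the ray $\arg z=\pi/6$ to $\R_+$, justified by the Airy asymptotics \eqref{eq:asymptAiry} in the sector $0\le\arg z\le\pi/6$, which gives $\int_0^\infty\tilde\psi_\lambda(e^{i\pi/6}x)\,dx=e^{-i\pi/6}\int_0^\infty\tilde\psi_\lambda(t)\,dt$, and you then read off \eqref{eq:93} from \eqref{eq:90}; your extra details (uniformity in $\lambda$ of the arc estimate, and the harmless complex conjugation in the inner product) only make explicit what the paper leaves implicit.
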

The proof follows immediately by observing  that by deformation of
contour, taking advantage of the asymptotic behavior of $\tilde
\psi_\lambda(z)$ in the  sector $0\leq\arg z\leq\pi/6$ as $|z|\to\infty$, as
recalled in \eqref{eq:asymptAiry}, we have  
\begin{displaymath}
  \int_0^{+\infty} \tilde \psi_\lambda (e^{i\pi/6} x)  dx= e^{-i\frac \pi 6}  \int_0^{+\infty} \tilde \psi_\lambda (t) \, dt\,.
\end{displaymath}
We can then apply \eqref{eq:90} to obtain \eqref{eq:93}. 

\subsection{No-slip Schr\"odinger on $(-1,1)$}
We can now obtain the main result of this section which is concerned
with the no-slip Schr\"odinger operator $\LL_{\beta,U}^\zeta$ 
 defined in \cite[Subsection
6.1]{AH-arma}. We recall that  $\LL_{\beta,U}^\zeta$ 
 is  the operator associated with the differential
 operator $-d^2/dx^2 + i \beta U$ with domain
  \begin{equation}
\label{eq:94}
    D(\LL_{\beta,U}^\zeta)= \{ u\in H^2(-1,1)\,| \, \langle\zeta_\pm ,u\rangle=0\,\} \,,
  \end{equation}
where $\zeta_{-}$ and $\zeta_+$ are linearly independent, $\beta$ dependent,
functions in $H^1(-1,+1)$.
For convenience we require that $\zeta_\pm$ satisfy 
  \begin{equation}
\label{eq:95}   
    \begin{bmatrix}
      \zeta_+(1) & \zeta_-(1) \\
      \zeta_+(-1) & \zeta_-(-1)
    \end{bmatrix}
=
\begin{bmatrix}
  1 & 0 \\
  0 & 1
\end{bmatrix}
\,.
  \end{equation}  
We recall also from \cite{AH-arma}  the definition the
following Schr\"odinger operator on $\R$ (see \cite[Section
5.1]{AH-arma}) 
  \begin{equation}
    \label{eq:96}
\LL_{\beta,\tilde U}^{\R}  =-\frac{d^2}{dx^2} +i \beta \tilde{U} \,,
  \end{equation}
with domain
  \begin{equation}
    \label{eq:97}
D(\LL_{\beta,\tilde U}^{\R}) = \{ u\in H^2(\R)\,|\, xu\in L^2(\R)\,\} \,,
  \end{equation}
in which
\begin{displaymath}
  \tilde{U}(x)=
  \begin{cases}
    U(x) & x\in[-1,1] \\
    U(1)+U^\prime(1)(x-1) & x>1 \\
   x+1 & x<-1
  \end{cases}\,.
\end{displaymath}
We can now make the following claim:
\begin{lemma}
  Let  $U\in C^2([-1,1])$,  $\theta>0$ and $C_1>0\,$.
  Then, there exist positive $C$ and $\varkappa_0$, so that for any
  $0<\varkappa\leq\varkappa_0\,$, subsist $\beta_0(\varkappa)$, such that for all $\beta \geq \beta_0$, $\lambda$
  satisfying
\begin{equation}
\label{eq:98}
  \varkappa^{-1}\beta^{-1/3}\leq \beta^{1/3}|\lambda- \lambda_0(\beta) |\leq\varkappa \,,
\end{equation} 
 $(\zeta_-,\zeta_+)$ satisfying the \eqref{eq:95},
\begin{equation} 
\label{eq:99}
\|\zeta_\pm \|_\infty\leq C_1\,,
\end{equation}
and
\begin{equation} 
\label{eq:100}
\|\zeta_\pm^\prime\|_\infty\leq\theta\,,
\end{equation}
and for any pair 
  $(v,g)\in D(\LL_{\beta,U}^\zeta)\times L^2(-1,1)$ satisfying
  \begin{equation}
    \label{eq:101}
(\LL_{\beta,U}^\zeta-\beta\lambda)v=g\,,
  \end{equation}
it holds that 
\begin{subequations}
  \label{eq:102} 
  \begin{equation}
\|v\|_{L^2(-1,0)}\leq C \beta^{-1}  |\lambda-  \lambda_0(\beta)|^{-1} \|g\|_2 \,,
\end{equation}
 and
\begin{equation} 
 \|v\|_{L^2(-1,0)} \leq C \beta^{-7/6}\log\beta \, |\lambda-  \lambda_0(\beta)|^{-1} \|g\|_\infty \,,
\end{equation}
\end{subequations}
where  $\lambda_0 (\beta)$  is defined in
\eqref{eq:7abcd}.\\
Furthermore, 
it holds that
\begin{subequations}
    \label{eq:103}
\begin{equation}
  \|v\|_{L^1(-1,0)}\leq C\, \log \beta\,\beta^{-7/6} |\lambda-  \lambda_0(\beta)|^{-1}\,\|g\|_2 \,,
\end{equation}
\begin{equation}
\|v\|_{L^1(-1,0)}\leq C\, \log \beta\,\beta^{-4/3} |\lambda-  \lambda_0(\beta)|^{-1}\,\|g\|_\infty \,,
\end{equation}
\end{subequations}
  \begin{subequations}
  \label{eq:104}
  \begin{equation}
|v(-1)|\leq C\, \log \beta\,\beta^{-1}\,|\lambda- \lambda_0 (\beta)|^{-1}\,\|g\|_\infty\,,
\end{equation}
and
 \begin{equation}
|v(-1)|\leq C \, \beta^{-5/6}\,|\lambda-  \lambda_0(\beta)|^{-1}\,\|g\|_2 \,.
\end{equation}
\end{subequations}
Finally, it holds that
\begin{subequations}
  \label{eq:105}
  \begin{equation}
\|v^\prime\|_{L^2(-1,0)}\leq C \beta^{-2/3}\,  |\lambda-  \lambda_0(\beta)|^{-1}\|g\|_2 \,,
\end{equation}
and that
 \begin{equation}
 \|v^\prime\|_{L^2(-1,0)} \leq C
  \beta^{-5/6}\log\beta \, |\lambda-  \lambda_0(\beta)|^{-1}\|g\|_\infty \,.
\end{equation}
\end{subequations}

\end{lemma}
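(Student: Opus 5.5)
We decompose $v$ through the Dirichlet resolvent. Let $\lambda$ satisfy \eqref{eq:98}. Since $\beta^{1/3}\lambda$ stays in a $\varkappa$-neighbourhood of $\beta^{1/3}\lambda_0(\beta)=\lambda_0(1)$, and $\arg\lambda_0(1)\neq\pi/3$ by Lemma \ref{lem:no-zeroes}, the point $\lambda$ belongs to $\Sg_{\Upsilon\beta^{-1/3},r\beta^{-1/3}}$ for suitable $\Upsilon,r$ once $\varkappa_0$ is small. Hence \eqref{eq:74}, \eqref{eq:75} and the estimate \eqref{eq:89} are available for $w:=(\LL_{\beta,U}^{\D,(-1,1)}-\beta\lambda)^{-1}g$.

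\textbf{Step 1: reduction to two scalar coefficients.} We write
\begin{displaymath}
v=w+c_-\psi_-+c_+\psi_+ ,
\end{displaymath}
where $\psi_\pm$ solve $(\LL_{\beta,U}-\beta\lambda)\psi_\pm=0$ with $\psi_\pm(\pm1)=1$ and $\psi_\pm(\mp1)=0$. These are constructed as in \cite[Section 6]{AH-arma}. Near $-1$, $\psi_-$ is $\eta_\delta\check\psi_-$ corrected by a Dirichlet resolvent term; near $+1$, $\psi_+$ is $\check\psi_+$ cut off, with a similar correction. Then $c_\pm=v(\pm1)$, and the two conditions $\langle\zeta_\pm,v\rangle=0$ give the $2\times2$ system
\begin{displaymath}
\begin{bmatrix}\langle\zeta_+,\psi_+\rangle&\langle\zeta_+,\psi_-\rangle\\ \langle\zeta_-,\psi_+\rangle&\langle\zeta_-,\psi_-\rangle\end{bmatrix}
\begin{bmatrix}c_+\\ c_-\end{bmatrix}=-\begin{bmatrix}\langle\zeta_+,w\rangle\\ \langle\zeta_-,w\rangle\end{bmatrix}.
\end{displaymath}

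\textbf{Step 2: the matrix entries.} Using \eqref{eq:95}, \eqref{eq:100} and the localization \eqref{eq:26}, \eqref{eq:216x}, we get $\langle\zeta_+,\psi_+\rangle=e^{-i\pi/4}(2/\beta)^{1/2}(1+o(1))$. We also get $\langle\zeta_-,\psi_+\rangle=\OO(\beta^{-1})$ and, by \eqref{eq:29}, $\langle\zeta_+,\psi_-\rangle=\OO(\beta^{-2/3})$.

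The key entry is $\langle\zeta_-,\psi_-\rangle$. Write $\zeta_-=1+(\zeta_--1)$ with $|\zeta_--1|\le\theta(1+x)$. Then
\begin{displaymath}
\langle\zeta_-,\psi_-\rangle=\langle1,\check\psi_-(\cdot,\lambda)\rangle+\OO(\theta\beta^{-2/3})+\text{(error from }U\neq1+x\text{ and the cut-off)}.
\end{displaymath}
Rescaling $x+1=\beta^{-1/3}y$ and applying \eqref{eq:93} gives $|\langle1,\check\psi_-\rangle|\approx\beta^{-1/3}\cdot\beta^{1/3}|\lambda-\lambda_0(\beta)|=|\lambda-\lambda_0(\beta)|$. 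By the lower bound in \eqref{eq:98} this is at least $\varkappa^{-1}\beta^{-2/3}$. This dominates the $\OO(\theta\beta^{-2/3})$ term once $\varkappa\le\varkappa_0(\theta)$ is small, and dominates the $\OO(\beta^{-5/6})$-type errors from replacing $U$ by $1+x$ once $\beta$ is large. Hence $|\langle\zeta_-,\psi_-\rangle|\ge c|\lambda-\lambda_0(\beta)|$.

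The determinant is therefore $\approx\beta^{-1/2}|\lambda-\lambda_0(\beta)|$, since the off-diagonal product is $\OO(\beta^{-5/3})\ll\beta^{-7/6}$. Cramer's rule gives
\begin{displaymath}
|c_-|\lesssim|\lambda-\lambda_0(\beta)|^{-1}\big(|\langle\zeta_-,w\rangle|+\beta^{-1/6}|\langle\zeta_+,w\rangle|\big),\qquad
|c_+|\lesssim\beta^{1/2}|\langle\zeta_+,w\rangle|+\beta^{-1/2}|c_-| .
\end{displaymath}

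\textbf{Step 3: assembling the bounds.} Since $\zeta_\pm$ are bounded by \eqref{eq:99}, we have $|\langle\zeta_\pm,w\rangle|\le C_1\|w\|_1$. This is bounded by $C\beta^{-5/6}\|g\|_2$, or by $C\beta^{-1}\log\beta\,\|g\|_\infty$, using \eqref{eq:75}. This yields \eqref{eq:104}:
\begin{displaymath}
|v(-1)|=|c_-|\lesssim|\lambda-\lambda_0(\beta)|^{-1}\beta^{-5/6}\|g\|_2\quad\text{or}\quad |\lambda-\lambda_0(\beta)|^{-1}\beta^{-1}\log\beta\,\|g\|_\infty .
\end{displaymath}
The remaining estimates follow by combining this with the norms of $\psi_-$ on $(-1,0)$, namely $\|\psi_-\|_2\approx\beta^{-1/6}$, $\|\psi_-\|_1\approx\beta^{-1/3}$ and $\|\psi_-'\|_2\approx\beta^{1/6}$ (from \eqref{eq:30}, \eqref{eq:29}, \eqref{eq:31}). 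We also use that $\psi_+$ is exponentially small on $(-1,0)$ by \eqref{eq:25}, and the direct bounds on $w$ from \eqref{eq:74}, \eqref{eq:75}, \eqref{eq:89}. For example, $\|c_-\psi_-\|_{L^2(-1,0)}\lesssim\beta^{-1}|\lambda-\lambda_0(\beta)|^{-1}\|g\|_2$, while $\|w\|_2\le C\beta^{-2/3}\|g\|_2$. The latter is no larger than $C\beta^{-1}|\lambda-\lambda_0(\beta)|^{-1}\|g\|_2$ because $|\lambda-\lambda_0(\beta)|\le\varkappa\beta^{-1/3}$. This gives (\ref{eq:102}a); the other estimates follow in the same way.

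\textbf{Main obstacle.} The hard step is the lower bound on $\langle\zeta_-,\psi_-\rangle$ in Step 2. It requires showing that the true $\psi_-$, with variable $U$ and the cut-off, differs from $\check\psi_-$ in weighted $L^1$ by $o(\beta^{-2/3})$, uniformly in $\lambda$ satisfying \eqref{eq:98}. I would obtain this from the decomposition in the proof of \eqref{eq:74}, applying \eqref{eq:73} to the source $i\beta(U-1-x)\check\psi_-=\OO(\beta(1+x)^2)\check\psi_-$. The same step also needs the rescaled form of \eqref{eq:93}, which provides the uniform linear dependence on $\lambda-\lambda_0(\beta)$.
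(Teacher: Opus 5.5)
Your route works, and it differs from the paper's in two ways: which resolvent supplies the particular solution, and how the homogeneous solutions are normalized.

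\textbf{Comparison with the paper.} The paper takes $u$ from the whole-line operator $\LL^{\R}_{\beta,\tilde U}$ restricted to $(-1,1)$. It uses $\psi_\pm-\tilde v_\pm$, with $\psi_-$ divided by $A_0(i\beta^{1/3}\lambda)$ so that $\langle 1,\psi_-\rangle=\beta^{-1/3}$. The $2\times2$ matrix is then uniformly well conditioned and $|A_\pm|\lesssim\beta^{-1/2}\|g\|_2$. The factor $|\lambda-\lambda_0(\beta)|^{-1}$ enters through the size of $\psi_-$, via \eqref{eq:108}. In that setup $v(-1)$ needs separate pointwise bounds on $u(-1)$ and $\tilde v_\pm(-1)$.

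You use the Dirichlet resolvent and boundary-normalized $\psi_\pm$, whose leading parts differ from the paper's by the scalar in (\ref{eq:107}a). The smallness then appears as the nearly degenerate entry $\langle\zeta_-,\psi_-\rangle\approx\lambda-\lambda_0(\beta)$, and Cramer's rule produces $|\lambda-\lambda_0(\beta)|^{-1}$. Since $v(-1)=c_-$ exactly, \eqref{eq:104} follows at once from \eqref{eq:75}. Your version needs only the Section 3 estimates. The price is that you must build $\psi_\pm$ for variable $U$ and control the correction yourself.

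\textbf{The step that is wrong as stated.} Write $\psi_-=\eta_\delta\check\psi_--r$, with $r=(\LL^{\D,(-1,1)}_{\beta,U}-\beta\lambda)^{-1}F$ and $F=i\beta(U-1-x)\eta_\delta\check\psi_-$ plus exponentially small cut-off terms. Rescaling $x+1=\beta^{-1/3}y$ shows that $\langle 1,r\rangle$ is of order $\beta^{-2/3}$, not $\beta^{-5/6}$.

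The $o(\beta^{-2/3})$ bound you ask for in your last paragraph cannot hold in general. For $U''(-1)\neq0$ this term displaces the zero of $\lambda\mapsto\langle\zeta_-,\psi_-\rangle$ from $\lambda_0(\beta)$ by an amount of order $\beta^{-2/3}$. That is exactly why \eqref{eq:98} excludes $|\lambda-\lambda_0(\beta)|<\varkappa^{-1}\beta^{-2/3}$.

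What you need is $|\langle\zeta_-,r\rangle|\le C\beta^{-2/3}$ with $C$ independent of $\varkappa$. You then absorb it by shrinking $\varkappa_0$, exactly as for your $\theta$ term; the paper does the same, where \eqref{eq:127} and \eqref{eq:129} become relative errors $\OO(\varkappa)$ in \eqref{eq:131}.

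This bound follows from the $L^2$ branch of \eqref{eq:75} together with $\|F\|_2\lesssim\beta\|(1+x)^2\check\psi_-\|_2\lesssim\beta^{1/6}$. That requires the sharp scaling $\|(1+x)^s\check\psi_-\|_2\lesssim\beta^{-(2s+1)/6}$ (cf. \eqref{eq:109}); \eqref{eq:30} as printed only gives $\|F\|_2\lesssim\beta^{1/2}$. Routing through \eqref{eq:73} or the $L^\infty$ branch of \eqref{eq:75} gives only $\beta^{-2/3}\log\beta$. That is not dominated by $\varkappa^{-1}\beta^{-2/3}$ for fixed $\varkappa$ as $\beta\to\infty$, so the choice of branch is essential.

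\textbf{Smaller points.}
\begin{itemize}
\item The corrected $\psi_+=\chi\check\psi_+-r_+$ is not exponentially small on $(-1,0)$. However, $\|r_+\|_2\lesssim\beta^{-11/12}$, $\|r_+\|_1\lesssim\beta^{-13/12}$ and $\|r_+^\prime\|_2\lesssim\beta^{-7/12}$, combined with $|c_+|\lesssim\beta^{-1/3}\|g\|_2$, are more than enough.
\item The first row gives $|c_+|\lesssim\beta^{1/2}|\langle\zeta_+,w\rangle|+\beta^{-1/6}|c_-|$, not $\beta^{-1/2}|c_-|$. This is harmless.
\item For (\ref{eq:105}b), \eqref{eq:74} only gives $\|w^\prime\|_2\lesssim\beta^{-1/3}\|g\|_\infty$, which is too weak. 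Use $\|w^\prime\|_2^2=\Re\langle g,w\rangle+\beta\,\Re\lambda\,\|w\|_2^2$ with \eqref{eq:75} and \eqref{eq:89} to get $\|w^\prime\|_2\lesssim\beta^{-1/2}\log^{1/2}\beta\,\|g\|_\infty$, as in \eqref{eq:144}.
\end{itemize}
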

\begin{proof}
  The proof is similar to the proof of \cite[Lemmas 6.1 and
  6.2]{AH-arma} (notice nevertheless that the condition 
  \eqref{eq:100} on $\zeta^\prime_\pm$ is stronger than in \cite{AH-arma} and
  allows for stronger estimates) 
  and hence we bring here only its main ingredients. Let
\begin{subequations}
\label{eq:106} 
    \begin{equation} 
\psi_-(x)= e^{i \pi/ 6} \, \frac{{\Ai}\big(\beta^{1/3}e^{ i\pi/6}\big[(1+x)+i\lambda\big]\big)}
{A_0\big(i\beta^{1/3}\lambda\big)}\,,
\end{equation}
and 
\begin{equation}
\overline{\psi_+ (x)}=-   e^{ i \pi/6}\frac{{\Ai}\big((J_+ \beta)^{1/3}e^{
    i\pi/6}\big[(1- x)+ iJ_+ ^{-1}(\bar \lambda+ i U( 1))\big]\big)} 
{A_0\big( i \beta^{1/3}J_+^{-2/3}[\bar \lambda+i\,U(1)]\big)}\,.
\end{equation}
\end{subequations}
where $J_+=U^\prime(1)$, $\Ai$ denotes Airy function and $A_0$ is
introduced in \eqref{eq:defA0}.   Note that by \eqref{eq:98} the
denominators in \eqref{eq:106} do not vanish. Note further that 
\begin{subequations}\label{eq:107}
\begin{equation}
  \psi_-(x)=\frac{\,{\rm Ai}\big( \beta^{1/3}e^{ 2i\pi/3} \lambda\big)}{A_0\big(i\beta^{1/3}\lambda\big)}\,\check
\psi_-(x)\,,
\end{equation}
where $\check \psi_-$ is given by (\ref{eq:20}a). \\
Similarly, it holds
that 
\begin{equation}
  \overline{\psi_+}(x)=\frac{\,{\rm Ai}\big( J_+^{-2/3}\beta^{1/3}e^{ 2i\pi/3} (\bar \lambda+  i\,U(1)
)\big)}{A_0\big(i\beta^{1/3}J_+^{-2/3}[\bar \lambda+i\,U(1)]\big)} \, \overline{\check \psi_+}(x)\,,
\end{equation}
where $\check
\psi_+$ is given by (\ref{eq:20}b).
\end{subequations}\\  Furthermore, we have by
\eqref{eq:93} that
\begin{equation}
\label{eq:108}
  \big|A_0\big(i\beta^{1/3}\lambda\big)\big| \geq C^{-1}\, \beta^{1/3}|\lambda- \lambda_0(\beta))|
\end{equation}
which is the reason why \cite[Eq. (6.17)]{AH-arma} is not valid for
$\psi_-$.  (Note that while using \cite[Eq.
(6.17)]{AH-arma} we assume, for large $\beta$, $|\lambda_-| \approx \beta^{-1/3} $
and $|\lambda_+| \approx 1$.)  Since we divide by $A_0\big(i\beta^{1/3}\lambda)$ in
\eqref{eq:106} we must rephrase \cite[Eq. (6.17)]{AH-arma} to make it
useful in the present context.  Thus, by \eqref{eq:108}, we have for
all $k\in[0,4]$
\begin{equation}
  \label{eq:109}
  \|(1+x)^k\, \psi_-\|_2 \leq C\,|\lambda- \lambda_0(\beta)|^{-1} \, \beta^{-(3+2k) /6}\,.
\end{equation}
For $\psi_+$ we use \eqref{eq:24}, \eqref{eq:25}, and \cite[Eq.
(8.87)]{AH-arma} to obtain, for some $\gamma>0$,
  \begin{equation}
  \label{eq:110}
\|\psi_+\|_{L^2(-1,0)} \leq Ce^{-\gamma\beta^{1/2}}\,.
\end{equation}
For later reference we also note, by similar amendment of
\cite[Eq. (6.27)]{AH-arma},  that for all $k\in[0,3]$  
\begin{equation}
\label{eq:111}
   \|(1+x)^k\, \psi_-\|_1 \leq C \, |\lambda- \lambda_0(\beta)|^{-1} \, \beta^{-(2+k) /3}\,,
\end{equation}
(note that, for $|\lambda-\lambda_0|\approx\beta^{-1/3}$, \eqref{eq:111} coincides with
\eqref{eq:29}) and
\begin{equation}
\label{eq:112}
  \|(1-x)^k\, \psi_+\|_1 \leq C\, \beta^{-(2+3k) /6}\,,
\end{equation}
 which is precisely the same as \eqref{eq:30}.
 We now observe that by \eqref{eq:31} and \eqref{eq:108} it holds
  that
\begin{equation}
  \label{eq:113}
 \|\psi_-^\prime\|_2 \leq C\,|\lambda- \lambda_0(\beta)|^{-1} \, \beta^{-1/6}\,.
\end{equation}
 As in \eqref{eq:110} we use (\ref{eq:20}b), \eqref{eq:22}, and \cite[Eq.
(8.87)]{AH-arma} to obtain that for some $\gamma>0$
\begin{equation}
  \label{eq:114}
 \|\psi_+^\prime\|_{L^1(-1,0)} \leq Ce^{-\gamma\beta^{1/2}}\,.
\end{equation}

Next, we set 
\begin{equation}
\label{eq:115}
  g_\pm  =
  \begin{cases}
    \big(-\frac{d^2}{dx^2} +i\beta(U+i\lambda)\big)\psi_\pm    &\mbox{ for }  x\in (-1,1) \\
    0 & \text{otherwise}\,,
  \end{cases}
\end{equation}
and then introduce 
\begin{equation}
\label{eq:116}
  \tilde{v}_\pm  = \Gamma_{(-1,1)}\Big( (\LL_{\beta,\tilde U}^{\R} -\beta\lambda)^{-1}g_\pm \Big) \,,
\end{equation}
where $ \Gamma_{(-1,1)} $ denotes the restriction to $(-1,+1)$.
In the same manner  \eqref{eq:109} and \eqref{eq:110} were obtained
from \cite[Eq. (6.17)]{AH-arma},  we can obtain from  
\cite[Eq. (6.18)]{AH-arma} that
\begin{equation}
  \label{eq:117}
 \|g_-\|_2\leq  C \, \beta^{-1/6}\, |\lambda-\lambda_0(\beta) |^{-1} \,,
\end{equation}
and
\begin{equation}
  \label{eq:118}
\|g_+\|_2\leq C\, \beta^{-1/12} \,. 
\end{equation}
As in the proof of \cite[Eq. (6.19)]{AH-arma} (with
$\widetilde{\LL}_{\beta,\mathbb R}$ now denoted by $\LL_{\beta,\tilde
  U}^{\mathbb R}$), an immediate application of \cite[Eq.
(5.4)--(5.5)]{AH-arma} then yields
\begin{equation}
  \label{eq:119}
\|(U+i\lambda)\tilde{v}_-\|_2 +\beta^{-1/3}\|\tilde{v}_-\|_2+  \beta^{-2/3}\|\tilde{v}_-^\prime\|_2 \leq
C\,\beta^{-7/6}\, |\lambda-\lambda_0(\beta) |^{-1} \,,
\end{equation}
and
\begin{equation}
  \label{eq:120}
\|(U+i\lambda)\tilde{v}_+\|_2 +\beta^{-1/3}\|\tilde{v}_+\|_2+  \beta^{-2/3}\|\tilde{v}_+^\prime\|_2 \leq
C\beta^{-13/12} \,. 
\end{equation}
Finally, let $(v,g)\in D(\LL_{\beta,U}^\zeta)\times L^2(-1,1)$ satisfy 
\begin{equation}
\label{eq:121}
  (\LL_{\beta,U}-\beta\lambda)v=g\,.
\end{equation}
Define the extension of $g$ to $\R$
\begin{equation}\label{eq:122}
  \tilde{g}(x) =
  \begin{cases}
    g(x) & x\in[-1,1] \\
    0 & \text{otherwise}\,,
  \end{cases}
\end{equation}
and set 
\begin{equation}
\label{eq:123}
  u = \Gamma_{(-1,1)}\big((\LL_{\beta,\tilde U}^{\R} -\beta\lambda)^{-1}\tilde{g}\big)\,.
\end{equation}
We now look as in \cite[Eq. (6.20)]{AH-arma}  for a solution of \eqref{eq:121} in the form
\begin{equation}
  \label{eq:124}
v = A_+(\psi_+-\tilde{v}_+) + A_-(\psi_--\tilde{v}_-) + u \,.
\end{equation}
Note that \eqref{eq:124} satisfies \eqref{eq:121} for all $A_\pm\in\C$. To have $v\in D(\LL^\zeta_{\beta,U})$, we
thus need to satisfy the integral conditions $\langle v,\zeta_\pm\rangle=0$ for a
specific pair $(A_-,A_+)\in \mathbb C^2$. We thus need to solve, as in the proof of 
\cite[Lemma 6.2]{AH-arma},  the system 
\begin{equation}
\label{eq:125}
      \begin{bmatrix}
      \langle\zeta_+,(\psi_+-\tilde{v}_+)\rangle &   \langle\zeta_+,(\psi_--\tilde{v}_-)\rangle \\
       \langle\zeta_-,(\psi_+-\tilde{v}_+)\rangle &   \langle\zeta_-,(\psi_--\tilde{v}_-)\rangle
    \end{bmatrix}
\begin{bmatrix}
  A_+ (g)\\
  A_- (g)
\end{bmatrix}
=
\begin{bmatrix}
  \langle\zeta_+,u\rangle \\ 
  \langle\zeta_-,u\rangle
\end{bmatrix}\,.
\end{equation}

To obtain an estimate for the various terms in \eqref{eq:125} we use
the estimates in \cite{AH-arma} for all $\cdot_+$ terms. For
terms involving the subscript $-$ we shall need to introduce
modifications  in view of \eqref{eq:111} and \eqref{eq:119}. Thus, we have,
by \eqref{eq:112}
\begin{equation}
\label{eq:126}
   |\langle\zeta_+ -1, \psi_+ \rangle| \leq
  \|\zeta^\prime_+ \|_\infty\|[1- x]\psi_+ \|_1\leq C\beta^{-5/6} \,.
\end{equation}
Whereas by \eqref{eq:111} it holds that
\begin{equation}
\label{eq:127}
   |\langle\zeta_- -1, \psi_- \rangle| \leq
  \|\zeta^\prime_- \|_\infty\|[1- x]\psi_- \|_1\leq C\, 
  |\lambda-\lambda_0(\beta) |^{-1} \, \beta^{-1}\,.
\end{equation}
Furthermore, by \eqref{eq:120} 
\begin{equation}
\label{eq:128}
  |\langle\zeta_+,\tilde{v}_+ \rangle|  \leq 2 \,
  \|(U-\nu+i\beta^{-1/3})^{-1}\|_2\,\|(U-\nu+ i\beta^{-1/3})\tilde{v}_+ \|_2 
  \leq C\, \beta^{-11/12} \,,
\end{equation}
whereas by \eqref{eq:119} we have
\begin{equation}
\label{eq:129}
  |\langle\zeta_-,\tilde{v}_- \rangle| \leq C\,
  |\lambda-\lambda_0(\beta) |^{-1}  \beta^{-1} \,.
\end{equation}
Since \cite[Eq. (6.28)]{AH-arma} applies as well we obtain that
\begin{equation}
\label{eq:130}
 \langle1,\psi_\pm\rangle =(J_\pm \beta)^{-1/3}\big[1+\OO(\beta^{-1})\big] \,,
\end{equation}
where $J_-=1$. \\
Combining \eqref{eq:130} and (\ref{eq:126})-(\ref{eq:129})
yields
\begin{equation}
  \label{eq:131}
|\langle\zeta_- , (\psi_- -\tilde{v}_-)\rangle-\beta^{-1/3}| \leq
C\, \beta^{-1}\,|\lambda-\lambda_0(\beta) |^{-1} \,, 
\end{equation}
and
\begin{equation}
  \label{eq:132}
|\langle\zeta_+ , (\psi_+ -\tilde{v}_+)\rangle-\beta^{-1/3}|\leq C\beta^{-5/6} \,.
\end{equation}
By \eqref{eq:100} and \eqref{eq:111} it holds that
\begin{equation}
\label{eq:133}
   |\langle\zeta_+ , \psi_-\rangle| \leq C\, 
   \beta^{-1}\,|\lambda-\lambda_0(\beta) |^{-1}\,.
\end{equation}
Similarly, by \eqref{eq:100} and \eqref{eq:112} we have
\begin{equation}
\label{eq:134}
   |\langle\zeta_- , \psi_+\rangle| \leq  C\, \beta^{-5/6}\,.
\end{equation}
Finally, as in \eqref{eq:128} and \eqref{eq:129} we obtain that
\begin{equation}
  \label{eq:135}
 |\langle\zeta_-,\tilde{v}_+ \rangle|  \leq C\, \beta^{-11/12} \,,
\end{equation}
and
\begin{equation}
  \label{eq:136}
 |\langle\zeta_+,\tilde{v}_- \rangle|  \leq C\, \beta^{-1}\,|\lambda-\lambda_0(\beta) |^{-1} \,.
\end{equation}
Substituting (\ref{eq:126})-(\ref{eq:136}) into \eqref{eq:125} yields,
in view of \eqref{eq:98},  
\begin{displaymath}
      \begin{bmatrix}
  (J_+\beta)^{-1/3}\big[1+\OO(\beta^{-1/2})\big]      & \OO(\varkappa\beta^{-1/3})  \\
    \OO(\beta^{-5/6})   &    \beta^{-1/3}\big[1+\OO(\varkappa)\big]
    \end{bmatrix}
\begin{bmatrix}
  A_+ (g)\\
  A_- (g)
\end{bmatrix}
=
\begin{bmatrix}
  \langle\zeta_+,u\rangle \\ 
  \langle\zeta_-,u\rangle
\end{bmatrix}\,.
\end{displaymath}
From the above system we obtain, for sufficiently small $\varkappa$, that
there exists $\beta_0(\varkappa)$  such that for all $\beta\geq \beta_0(\varkappa)$ it holds that
\begin{equation}
\label{eq:137}
  |A_\pm(g)|\leq C\,\big(\beta^{1/3} |\langle\zeta_\pm,u\rangle|+ \varkappa |\langle\zeta_\mp,u\rangle| \big)\,,
\end{equation}
yielding, by \cite[Eq. (5.36)]{AH-arma}, 
\begin{equation}
\label{eq:138}
   |A_\pm(g)|\leq C\beta^{-1/2}\|g\|_2 \,.
\end{equation}
Together with \eqref{eq:109}, \eqref{eq:110},  \eqref{eq:119},
  \eqref{eq:120}, \eqref{eq:123}, \eqref{eq:124},  \eqref{eq:98},
and \cite[Eq. (5.4)]{AH-arma} the above inequality yields
(\ref{eq:102}a). To establish (\ref{eq:105}a) we need in addition
\eqref{eq:113} and \eqref{eq:114}.   Next, we conclude
  (\ref{eq:103}a) from \eqref{eq:124}, \eqref{eq:111}, \eqref{eq:114},
  \eqref{eq:119}, \eqref{eq:120} and \cite[Lemma 5.6]{AH-arma} (applied
  to \eqref{eq:123}). Note that to apply \cite[Lemma 5.6]{AH-arma}, we
  must have $\Re\lambda\lesssim\beta^{-1/3}$, which is clearly guaranteed by
  \eqref{eq:98}.

To prove (\ref{eq:103}b) we use again \cite[Lemma 5.6]{AH-arma}
  to obtain from \eqref{eq:137} that
\begin{equation}
\label{eq:139}
   |A_\pm(g)|\leq C\beta^{-2/3}\log\beta \, \|g\|_\infty  \,.
\end{equation}
We can now conclude (\ref{eq:103}b) in the same manner we have obtained
(\ref{eq:103}a). 
Finally, by \eqref{eq:108} together with (\ref{eq:106}a) it holds that
\begin{equation}\label{eq:140}
  |\psi_-(-1)|  \leq  C\, \beta^{-1/3}|\lambda-\lambda_0(\beta) |^{-1}\,,
\end{equation}
and by  \eqref{eq:25}, (\ref{eq:107}b), and  \cite[Eq.
(8.87)]{AH-arma} it holds that
\begin{equation*}
  |\psi_+(-1)|  \leq  C\, \beta^{ -1/12}\exp - \beta^{1/2}\,,
\end{equation*}
which clearly implies, using \eqref{eq:98}, that
\begin{equation}
\label{eq:141}
  |\psi_+(-1)|  \leq  \widehat C\, \beta^{-1/3}|\lambda-\lambda_0(\beta) |^{-1}\,.
\end{equation}
 By \eqref{eq:124} it holds that 
\begin{multline}
\label{eq:142}
  |v(-1)|\leq A_-(g)\big[|\psi_-(-1)|+|\tilde{v}_-(-1)|\big]+\\ A_+(g)\big[|\psi_+(-1)|+|\tilde{v}_+(-1)|\big ]+|u(-1)| \,.
\end{multline}

 We now estimate some of the  terms on the right-hand-side of
\eqref{eq:142}.\\
 Let $\tilde{u}:=\big(\LL_{\beta,\tilde U}^{\R}
-\beta\lambda)^{-1}\tilde{g}$. 
We begin by observing that by  \cite[Proposition 5.1]{AH-arma} and \eqref{eq:122}, it holds that
\begin{subequations}
\label{eq:143}
\begin{equation}
  \|\tilde{u}\|_{L^2(\R)} \leq C \beta^{-2/3}\|g\|_2 \,,
\end{equation}
and that, by \cite[Lemma 5.5]{AH-arma}, for any $a>0\,$, 
\begin{equation}
  \|\tilde{u}\|_{L^2(-a,+a)} \leq C_a  \beta^{-5/6}\|g\|_\infty\,.
\end{equation}
Combining the above with \eqref{eq:139}, \eqref{eq:124},
\eqref{eq:119}, \eqref{eq:120}, \eqref{eq:109}, and \eqref{eq:110} yields
(\ref{eq:102}b).

By \cite[Proposition 5.1]{AH-arma}, it holds that
\begin{displaymath}
   \|[\tilde U-\Im\lambda]\tilde{u}\|_{L^2(\R)} \leq C  \beta^{-1}\|\tilde g\|_2\,,
\end{displaymath}
and hence, for any $a>2$ it holds that
\begin{displaymath}
     \|\tilde{u}\|_{L^2(-\infty,-a)}+  \|\tilde{u}\|_{L^2(a, +\infty)} \leq \check  C
     \beta^{-1}\|g\|_2 \,.
\end{displaymath}
 Combining the above with (\ref{eq:143}b)   yields,   as $\|g\|_2 \leq \sqrt{2}\, \|g\|_\infty\,$,
\begin{equation}
  \|\tilde{u}\|_{L^2(\R)} \leq C  \beta^{-5/6}\|g\|_\infty\,.
\end{equation}
\end{subequations}
Furthermore, since
\begin{displaymath}
  \|\tilde{u}^\prime\|_{L^2(\R)}^2 =  \Re \langle\tilde{u},\tilde{g}\rangle + \beta\, \Re\lambda  \|\tilde{u}\|_{L^2(\R)}^2\,,
  \end{displaymath}
we obtain, in view of \eqref{eq:98}, 
\begin{displaymath}
  \|\tilde{u}^\prime\|_{L^2(\R)}^2 \leq \|\tilde{u}\|_{L^1(-1,+1)}\|g\|_\infty + C\beta^{2/3}\,\|\tilde{u}\|_{L^2(\R)}^2 \,.
\end{displaymath}
Hence, by (\ref{eq:143}c) and \cite[Lemma 5.6]{AH-arma} 
\begin{equation}
\label{eq:144}
    \|\tilde{u}^\prime\|_{L^2(\R)}^2 \leq C\beta^{-1/2}\log^{1/2}\beta \, \|g\|_\infty
\end{equation}
Recalling that $u$ is the restriction of $\tilde u$ to $(-1,+1)$
 we may now combine \eqref{eq:144} with \eqref{eq:98}, \eqref{eq:139},
  \eqref{eq:119}, \eqref{eq:120}, \eqref{eq:113}, and \eqref{eq:114} to
  establish (\ref{eq:105}b).\\
By  \cite[Proposition 5.1]{AH-arma} it holds that
\begin{displaymath}
   \|\tilde{u}^\prime\|_{L^2(\R)}\leq C \beta^{-1/3}\|g\|_2\,,
\end{displaymath}
Combining the above with \eqref{eq:144} yields 
\begin{displaymath}
   \|\tilde{u}^\prime\|_{L^2(\R)}\leq C \min \big(\beta^{-1/2} \, \log^{1/2}\beta
   \,\|g\|_\infty,\beta^{-1/3}\|g\|_2\big)\,.
\end{displaymath}
Hence, by \eqref{eq:123}, the above inequality, (\ref{eq:143}a), and
(\ref{eq:143}c) 
\begin{equation}
\label{eq:145}
  |u(-1)|\leq \sqrt{2}\,
  \|\tilde{u}\|_{L^2(\R)}^{1/2}\,\|\tilde{u}\|_{H^1(\R)}^{1/2} \leq
  C\min(\beta^{-1/2}\|g\|_2\,,\,\beta^{-2/3}\log\beta\|g\|_\infty)\,, 
\end{equation}
Finally,  in a similar manner to \eqref{eq:145} we
obtain from \eqref{eq:116} and \eqref{eq:117} that  
\begin{displaymath}
  |\tilde{v}_-(-1)|\leq C\beta^{-1/2}\|g_-\|_2\leq C\,\beta^{-2/3}\, |\lambda-\lambda_0(\beta) |^{-1} \,,
\end{displaymath}
and by  \eqref{eq:118}, 
\begin{displaymath}
  |\tilde{v}_+(-1)|\leq C\beta^{-1/2}\|g_+\|_2\leq C\beta^{-7/12} \,.
\end{displaymath}
 We can finally obtain \eqref{eq:104} from the above, \eqref{eq:145},
\eqref{eq:138}, and \eqref{eq:139}.\\
\end{proof}

\section{Existence of eigenvalues.}
\label{sec:4}
 In the following we establish the proof of Theorem \ref{thm:main}
stating the existence of a critical value for
$\B_{\lambda,\alpha,\beta}^\D$ near $\lambda_0(\beta)$ which is defined in \eqref{eq:7abcd}. We
note that this critical value is an eigenvalue of the linearized
Navier-Stokes operator (see \S1 and 
\cite{AH-arma,drre04}).

\subsection{Boundary layer estimates}
\label{sec:4.1}
 For $\alpha \geq0$, $\beta >0$ and  $\lambda \in \mathbb C$, we consider (see \eqref{eq:6}) 
$(\phi,f)\in D(\B^\D_{\lambda,\alpha,\beta})\times L^\infty(-1,1)$ satisfying
\begin{subequations}
\label{eq:146}
\begin{equation}
  \B_{\lambda,\alpha,\beta}\,\phi = f\,,
\end{equation}
and associate with this pair
\begin{equation}
  v:=-\phi^{\prime\prime}+\alpha^2\phi \,,
\end{equation}
\begin{equation}
\tilde{\phi}(x):=   \phi(x)+ v(-1)\,\alpha^{-1}\int_{-1}^x\sinh\alpha(x-\xi) \,\check{\psi}_- (\xi,\lambda,\beta)\,d\xi \,,
\end{equation}
 and 
\begin{equation}
  \tilde{v}:=-\tilde{\phi}^{\prime\prime}+\alpha^2\tilde{\phi} \,,
\end{equation}
 where $\check{\psi}_-$ is given by (\ref{eq:20}a)\,.\\
\end{subequations}
Note  that
\begin{subequations}
\label{eq:147}
\begin{equation}
  v=\tilde{v}+v(-1)\check{\psi}_-(\cdot,\lambda,\beta) \,.
\end{equation}
 and
\begin{equation}
\tilde v(-1) =0\,.
\end{equation}
\end{subequations}

Set, for some $s \in (0,1/3)$,
\begin{equation}
  \label{eq:148}
\delta(\beta) =\beta^{-s} 
\end{equation}
and recall that for $\delta>0$,  $\eta_\delta$  is given by \eqref{eq:50} and
\eqref{eq:76}. 

We begin by  obtaining an estimate  
of $\tilde{v}$ , associated with some suitable $(\alpha,\beta,\lambda, f,\phi)$,
near  $x=-1$. 
\begin{lemma}
 Let $\alpha_0>0$. Then,  for any $k\in\N$
there exist positive  positive $C$, 
$\varkappa_0$ and  $\beta_0$, such that for all $\beta\geq
\beta_0\,$, $\varkappa_0/2<\varkappa<\varkappa_0\,$, $0\leq\alpha \leq \alpha_0\,,$
   $\lambda$ satisfying (cf \eqref{eq:98})
 \begin{displaymath}
(K)_{\varkappa,\beta}  \qquad   \varkappa^{-1}\beta^{-1/3}\leq \beta^{1/3}|\lambda- \lambda_0(\beta) |\leq\varkappa \,,
 \end{displaymath}
$(\phi,f,v,\tilde v,\alpha,\lambda,\beta)$ satisfying \eqref{eq:146}, and
 $\delta(\beta)$ given by \eqref{eq:148}, it holds that 
\begin{equation}
    \label{eq:150}
   I_0(\beta) \leq C  \,\big[\varepsilon(\beta) ^kI_k(\beta) +\delta(\beta)\,|\lambda-\lambda_0(\beta) |\log\beta|\,|v(-1)|+\beta^{-5/6}\|f\|_2\big] \,,
  \end{equation}
where
\begin{equation}
\label{eq:151}
  I_k(\beta)= \|\eta_{2^k\delta(\beta)}\,\tilde{v} \|_1 + \beta^{-1/6} \| \eta_{2^k\delta(\beta)}\,\tilde{v}\|_2 +
  \beta^{-1/2}\|(\eta_{2^k\delta(\beta)}\,\tilde{v})^\prime\|_2\,.
\end{equation}
and
\begin{equation}
\label{eq:149}
\varepsilon(\beta)=\delta(\beta)\log\beta+\delta(\beta)^{-1}\beta^{-1/3}\,.
\end{equation}
\end{lemma}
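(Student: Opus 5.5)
The plan is to localize $\tilde v$ near $x=-1$ with the cut-off $\eta_\delta$, $\delta=\delta(\beta)$, and use the no-slip estimates of Lemma~4.3 (the $(-1,1)$ no-slip lemma).

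\textbf{Step 1: the equation for $\tilde v$.} First I will write the equation satisfied by $\tilde v$. From \eqref{eq:146} and \eqref{eq:147} we have
\begin{displaymath}
(\LL_\beta-\beta\lambda)\tilde v = f + i\beta U''\phi - v(-1)(\LL_\beta-\beta\lambda)\check\psi_- .
\end{displaymath}
Since $(\LL_{0,\beta}-\beta\lambda)\check\psi_-=0$, the last term equals $-i\beta\,v(-1)\,(U-(1+x))\check\psi_-$. By \eqref{eq:30} with $s=2$, its $L^2$ norm is $\OO(\beta^{1/2}|v(-1)|)$, and its $L^1$ norm is $\OO(\beta^{0}|v(-1)|)$ by \eqref{eq:29}.

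\textbf{Step 2: localization and the no-slip problem.} Next I set $w=\eta_\delta\tilde v$. Then
\begin{displaymath}
(\LL_{\beta,U}-\beta\lambda)w=\eta_\delta(\cdots)-2\eta_\delta'\tilde v'-\eta_\delta''\tilde v .
\end{displaymath}
The boundary conditions $\phi(\pm1)=\phi'(\pm1)=0$ translate into $\langle\zeta_\pm,v\rangle=0$, and hence into conditions on $\tilde v$. These are perturbed by the $\check\psi_-$ correction, whose contribution $v(-1)\langle\zeta_\pm,\check\psi_-\rangle$ is small by \eqref{eq:29}.

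The function $w$ does not satisfy $\langle\zeta_\pm,w\rangle=0$ exactly. I will correct this by subtracting a fixed smooth compactly supported combination, with coefficients given by $\langle\zeta_\pm,(1-\eta_\delta)\tilde v\rangle$ and by the $\check\psi_-$ terms. Then Lemma~4.3 applies with the adapted $\zeta_\pm$, which satisfy \eqref{eq:99}--\eqref{eq:100} for $\alpha\le\alpha_0$.

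The estimates (\ref{eq:102}), (\ref{eq:103}) and (\ref{eq:105}) carry the factor $|\lambda-\lambda_0|^{-1}$. Under $(K)_{\varkappa,\beta}$ this factor is at most $\varkappa\beta^{2/3}$. Multiplying the $L^2$ estimates by this, and using the weights in $I_0$, gives the $\beta^{-5/6}\|f\|_2$ term.

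\textbf{Step 3: the commutator terms.} The commutator terms are supported in $[-1+\delta,-1+2\delta]$. I will bound them in $L^\infty$ or $L^2$ by $\delta^{-1}\|\eta_{2\delta}\tilde v'\|$ and $\delta^{-2}\|\eta_{2\delta}\tilde v\|$. Feeding these into the $L^1$ and $L^2$ bounds of Lemma~4.3 produces factors of the type $\beta^{-1/3}\delta^{-1}$ relative to $I_1$, which is the second part of $\varepsilon$.

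The term $i\beta U''\phi$ must be expressed through $v$. Using $\phi$ as the double integral of $v$ from $-1$ (given $\phi(-1)=\phi'(-1)=0$), I get $|\phi(x)|\le (1+x)\|v\|_{L^1(-1,x)}$. On the support of $\eta_\delta$ this yields a gain of $\delta$. Together with the $\log\beta$ factor from the $L^\infty\to L^1$ bound (\ref{eq:103}b), this is the $\delta\log\beta$ part of $\varepsilon$, bounded relative to $I_1$.

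The $v(-1)$ contributions lead to the $\delta\,|\lambda-\lambda_0|\log\beta\,|v(-1)|$ term. After multiplication by the resolvent bound, what remains is a factor $|\lambda-\lambda_0|$ times the coupling coefficients; I expect the bookkeeping here to need care.

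\textbf{Step 4: iteration.} This gives $I_0\le C[\varepsilon I_1+\cdots]$. Iterating with $\delta$ replaced by $2^j\delta$, $j=1,\dots,k-1$, yields $\varepsilon^k I_k$; the constants depend on $k$ only.

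\textbf{Main obstacle.} I expect the main obstacle to be enforcing the integral constraints for the localized function. The terms $\langle\zeta_\pm,(1-\eta_\delta)\tilde v\rangle$ are not local. They must be absorbed using the weight $(1-x)$ and the smallness of $\zeta_\pm'$, or by noting that the correction enters only through the bounded $A_\pm$ coefficients. I would first try placing the correction functions near $x=1$, where their effect on $L^2(-1,0)$ is exponentially small by \eqref{eq:110}.
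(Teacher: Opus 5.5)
Your proposal has a genuine gap in Step 2: you localize with the wrong model operator. Every no-slip estimate in \eqref{eq:102}--\eqref{eq:105} carries the factor $|\lambda-\lambda_0(\beta)|^{-1}$. Under $(K)_{\varkappa,\beta}$ this factor can be as large as $\varkappa\beta^{2/3}$, and the loss is fatal, not a matter of constants:
\begin{itemize}
\item (\ref{eq:103}a) turns $f$ into a contribution $\log\beta\,\beta^{-1/2}\|f\|_2$ to $I_0$, not $\beta^{-5/6}\|f\|_2$.
\item The commutator terms come back with a factor at least $\beta^{-2/3}\delta^{-1}|\lambda-\lambda_0(\beta)|^{-1}$ relative to $I_1$. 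At the lower end of $(K)_{\varkappa,\beta}$ this is of order $\varkappa\delta^{-1}\gg 1$.
\item The term $\beta U''\eta_\delta\phi$ comes back, via (\ref{eq:103}b), with $\delta\log\beta\,\beta^{-1/3}|\lambda-\lambda_0(\beta)|^{-1}$, which can be of order $\varkappa\beta^{1/3-s}\log\beta$.
\end{itemize}
So there is no contraction, and the iteration of Step 4 cannot start. Your constraint corrections $\langle\zeta_\pm,(1-\eta_\delta)\tilde v\rangle$ also involve $\tilde v$ outside the support of $\eta_{2^k\delta}$, and nothing on the right of \eqref{eq:150} controls them.

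The missing idea is the very reason $\tilde\phi$ is introduced. By (\ref{eq:147}b), $\tilde v(-1)=0$, so $\eta_\delta\tilde v\in D(\LL^{\D,(-1,1)}_{\beta,U})$ and no integral constraint needs to be enforced. The Dirichlet problem is not singular at $\lambda_0(\beta)$: $\beta^{1/3}\lambda_0(\beta)=\lambda_0(1)$ lies at positive distance from $\sigma(\LL_+)$ by Lemma~\ref{lem:no-zeroes}. Hence \eqref{eq:74}--\eqref{eq:75} give \eqref{eq:161}--\eqref{eq:162} uniformly under $(K)_{\varkappa,\beta}$, with bounds $\beta^{-2/3}$ in $L^2$, $\beta^{-1}\log\beta$ from $L^\infty$ to $L^1$, and $\beta^{-5/6}$ from $L^2$ to $L^1$. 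Fed with $\|\eta_\delta\tilde\phi\|_\infty\le C\delta\|\eta_{2\delta}\tilde v\|_1$ and the commutator bounds, these produce exactly $\delta\log\beta+\delta^{-1}\beta^{-1/3}$ and $\beta^{-5/6}\|f\|_2$. Your ``main obstacle'' never arises.

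The factor $|\lambda-\lambda_0(\beta)|$ in front of $|v(-1)|$ in \eqref{eq:150} cannot come from a resolvent bound; it comes from a cancellation your proposal does not identify. The paper keeps $\tilde\phi$, not $\phi$, in the equation. Since $\tilde\phi(-1)=\tilde\phi'(-1)=0$ and $\tilde\phi''=\alpha^2\tilde\phi-\tilde v$, the bound on $\eta_\delta\tilde\phi$ above involves $\tilde v$ alone. The difference $\phi-\tilde\phi$ goes into the source $\tilde g$ of \eqref{eq:153}. Its dangerous part, $(1+x)\int_{-1}^x\check\psi_-$, is handled through $|\int_{-1}^\infty\check\psi_-|\le C|\lambda-\lambda_0(\beta)|$ (see \eqref{eq:173}, a dilated form of \eqref{eq:93}). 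This gives $\|\eta_{2^k\delta}\tilde g\|_\infty\le C\delta\beta|\lambda-\lambda_0(\beta)|\,|v(-1)|$, and the $\beta^{-1}\log\beta$ bound then yields the required term.

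Your estimate $|\phi(x)|\le(1+x)\|v\|_{L^1(-1,x)}$ contains $|v(-1)|\,\|\check\psi_-\|_1\approx\beta^{-1/3}|v(-1)|$. It therefore produces $\delta\log\beta\,\beta^{-1/3}|v(-1)|$, too large by the factor $\beta^{-1/3}|\lambda-\lambda_0(\beta)|^{-1}$. That factor is unbounded in the regime $|\lambda-\lambda_0(\beta)|\ll\beta^{-1/3}$ on which Lemma~\ref{lem:edge-value} and the proof of Theorem~\ref{thm:main} rely.

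Similarly, your bound $\OO(\beta^{1/2}|v(-1)|)$ for the $\hat g_-$ term of \eqref{eq:154} is too weak. Through the $\beta^{-5/6}$ bound it would contribute $\beta^{-1/3}|v(-1)|$. The paper instead uses $\|\hat g_-\|_2\le C\beta^{1/6}$, which reflects the $\beta^{-1/3}$ width of $\check\psi_-$.
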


\begin{proof}
Using \eqref{eq:6} and \eqref{eq:146}, the equation for $\tilde{\phi}$ assumes the form
\begin{equation}
\label{eq:152}
  \Big(-\frac{d^2}{dx^2}+i\beta(U+i\lambda)\Big)\tilde{v}+i\beta U^{\prime\prime}\tilde{\phi}=
  f+v(-1)\hat{g}_--\tilde{g}\,, 
\end{equation}
where
\begin{equation}
\label{eq:153}
  \tilde{g}(x)= i\beta\, v(-1)\, U^{\prime\prime}(x)\,\alpha^{-1} \int_{-1}^x\sinh\alpha(x-\xi)\,\check{\psi}_-(\xi, \lambda,\beta)\,d\xi\,,
\end{equation}
and
\begin{equation}\label{eq:154}
 \hat{g}_-(\cdot,\lambda) =\Big(-\frac{d^2}{dx^2} +i\beta(U+i\lambda)\Big)\,\check{\psi}_-(\cdot,\lambda,\beta) \,.
\end{equation}

Clearly, for any $x\in(-1,-1+2\delta)$, we have
\begin{equation}
\label{eq:155}
  |\tilde{\phi}(x)|=\Big|\int_{-1}^x (x-\xi)\tilde{\phi}^{\prime\prime}(\xi)\,d\xi\Big|\leq
2\delta \, \big(\|\tilde{v}\|_{L^1(-1,x)}+\alpha_0^2 \,\|\tilde{\phi}\|_{L^1(-1,x)}\big)\,. 
\end{equation}
Here $\beta$ is chosen large enough so that $\delta=\delta(\beta) <1/2$.
Hence, integrating the above inequality 
over $[-1,x]$ yields,  for sufficiently large $\beta_0$, 
\begin{equation}\label{eq:156}
  \|\tilde{\phi}\|_{L^1(-1,x)}\leq C\,\delta^2\, \|\tilde{v}\|_{L^1(-1,x)}\,.
\end{equation}
Substituting \eqref{eq:156}  into \eqref{eq:155} leads to
\begin{equation}\label{eq:157}
   |\tilde{\phi}(x)|\leq C\,\delta\,\|\tilde{v}\|_{L^1(-1,x)}\,,\, \forall x \in (-1,-1+2 \delta)\,.
\end{equation}
Similarly, we write
\begin{equation}\label{eq:158}
  |\tilde{\phi}^\prime(x)|=\Big|\int_{-1}^x\tilde{\phi}^{\prime\prime}(\xi)\,d\xi\Big|\leq
  \|\tilde{v}\|_{L^1(-1,x)}+C\,\|\tilde{\phi}\|_{L^1(-1,x)} \,, 
\end{equation}
which together with \eqref{eq:157} yields
\begin{equation}
  \label{eq:159}
 |\tilde{\phi}^\prime(x)|\leq \check C\, \|\tilde{v}\|_{L^1(-1,x)}\,,\, \forall x \in (-1,-1+2 \delta)\,.
\end{equation}
By \eqref{eq:157} it holds, in view of \eqref{eq:76}
 that
\begin{equation}
  \label{eq:160}
\| \eta_\delta \,\tilde{\phi} \|_\infty \leq C\, \delta\, \|\eta_{2\delta}\, \tilde v\|_1\,.
\end{equation}
~\\
From Lemma \ref{lem:no-zeroes}, $(K)_{\varkappa,\beta}$ (or \eqref{eq:98}), and \eqref{eq:74} we
may conclude that
\begin{equation}
\label{eq:161}
\|( \LL^{\D,(-1,+1)}_{\beta,U} -\beta\lambda)^{-1}\| +\beta^{-1/3}\|\frac{d}{dx}\,( \LL^{\D,(-1,+1)}_{\beta ,U}-\beta\lambda)^{-1}\| \leq C\beta^{-2/3}\,.
\end{equation}
Similarly, we obtain  by $(K)_{\varkappa,\beta}$ and \eqref{eq:75} that
for any $g\in L^\infty(-1,1)$
\begin{equation}
\label{eq:162}
  \|( \LL^{\D,(-1,+1)}_{\beta,U} -\beta\lambda)^{-1} g\|_1 \leq C\, \min \Big(\frac{\log\beta}{\beta}\|g\|_\infty,\beta^{-5/6}\|g\|_2\Big)\,.
\end{equation}

Next, we estimate $\|\eta_{2\delta} \tilde{v}\|_1$ using \eqref{eq:162}. To this
end we first write, in view of (\ref{eq:152}),
\begin{equation}
\label{eq:163}
   ( \LL_{\beta,U} -\beta\lambda)(\eta_\delta\,\tilde{v})=-i\beta
   U^{\prime\prime}\eta_\delta\,\tilde{\phi}+\eta_\delta\,\big[-\tilde{g} +v(-1)\,\hat{g}_-+f\big] +2\,\eta_\delta^\prime\,\tilde{v}^\prime+\eta_\delta^{\prime\prime}\, \tilde{v}\,.
\end{equation}
Then, by \ \eqref{eq:147},  $\eta_\delta \, \tilde{v}$ belongs to $
D\big(\LL^{\D,(-1,+1)}_{\beta,U}\big)$ and   it holds  by \eqref{eq:162}
(using both the $L^2$ and the $L^\infty$ estimate)) that 
\begin{multline*}
  \| \eta_\delta\,\tilde{v}\|_1\leq C\,\Big(\log\beta\,\|\eta_\delta\,\tilde{\phi}\|_\infty
  +\beta^{-5/6}\big[\|f\|_2+|v(-1)|\,\|\hat{g}\|_2\big]\\ + \beta^{-1}\log\beta \,
  \|\eta_\delta\,\tilde{g}\|_\infty +\beta^{-5/6}\big[\|\eta_\delta^\prime\,\tilde{v}^\prime\|_2+\|\eta_\delta^{\prime\prime}\,\tilde{v}\|_2\big]\Big)\,.
\end{multline*}
For the last two terms on the right-hand-side we have, 
\begin{displaymath}
 \|\eta_\delta^\prime\tilde{v}^\prime\|_2+\|\eta_\delta^{\prime\prime}\tilde{v}\|_2 \leq
 C\delta^{-1} \,\Big[\|(\eta_{2\delta}\,\tilde{v})^\prime\|_2+\delta^{-1}\|\eta_{2\delta}\,\tilde{v}\|_2\Big] \,.
\end{displaymath}
By \eqref{eq:160} we then obtain that
\begin{equation}
\label{eq:164}
\begin{array}{l}
   \| \eta_\delta \,\tilde{v}\|_1\leq C\,\Big(\delta\log\beta\,\|\eta_{2\delta}\,\tilde{v}\|_1 
   +\beta^{-5/6}\big[\|f\|_2+|v(-1)|\,\|\hat{g}_-\|_2\big] \\ \qquad \qquad  \qquad +\beta^{-1}\log\beta \,
  \|\eta_\delta\,\tilde{g}\|_\infty+\delta^{-1}\beta^{-5/6}\big[\|(\eta_{2\delta}\,\tilde{v})^\prime\|_2+\delta^{-1}\|\eta_{2\delta}\,\tilde{v}\|_2\big]\Big)\,.
\end{array}
\end{equation}
In view of \eqref{eq:160},  \eqref{eq:161}, \eqref{eq:162},  and
\eqref{eq:163} it holds that 
\begin{multline}
\label{eq:165}
  \|\eta_\delta\,\tilde{v}\|_2\leq C\beta^{-2/3}\Big[\beta^{-1/6}\|\eta_\delta\,\tilde{g}\|_\infty+\|f\|_2+|v(-1)|\,\|\hat{g}_-\|_2+\beta^{5/6}\delta\,\|\eta_{2\delta}\,v\|_1\\ +
  \delta^{-1}\,\|(\eta_{2\delta}\,\tilde{v})^\prime\|_2+\delta^{-2}\,\|\eta_{2\delta}\,\tilde{v}\|_2\Big] \,.  
\end{multline}
Furthermore, it holds that
\begin{multline}\label{eq:166}
  \|(\eta_\delta\,\tilde{v})^\prime\|_2\leq
  C\beta^{-1/3}\Big [\beta^{-1/6}\log\beta\|\eta_\delta\,\tilde{g}\|_\infty+\|f\|_2+|v(-1)|\,\|\hat{g}_-\|_2 \\ \quad  +\beta^{5/6}\delta\,\log\beta\,\|\eta_{2\delta}\,\tilde v\|_1+
  \delta^{-1}\|(\eta_{2\delta}\,\tilde{v})^\prime\|_2+\delta^{-2}\|\eta_{2\delta}\,\tilde{v}\|_2\Big] \,.  
  \end{multline}
Combining \eqref{eq:166} with \eqref{eq:164} and \eqref{eq:165} yields 
\begin{multline}
\label{eq:167}
  \| \eta_\delta \,\tilde v\|_1 + \beta^{-1/6}\| \eta_\delta\,\tilde v \|_2 +
  \beta^{-1/2}\|(\eta_\delta\,\tilde{v})^\prime\|_2\leq \\
 \leq  C\Big[\delta\log\beta\,\|\eta_{2\delta}\,\tilde{v}\|_1+\beta^{-5/6}\big[\delta^{-1}\|(\eta_{2\delta}\,\tilde{v})^\prime\|_2+\delta^{-2}\|\eta_{2\delta}\,\tilde{v}\|_2\big]\\
  +\beta^{-1}\log\beta\, \|\eta_\delta\,\tilde{g}\|_\infty+\beta^{-5/6}\big[\|f\|_2+v(-1)\|\hat{g}_-\|_2\big]\Big]\,.
\end{multline}
Let $I_k$ be given by \eqref{eq:151}.
From \eqref{eq:167} we can now conclude that for  $\beta_0$ large enough and $\beta \geq \beta_0$, we have with $\delta=\delta(\beta)$, 
\begin{displaymath}
  I_0(\beta) \leq C\,\Big[(\delta  \log\beta+\delta^{-1}\beta^{-1/3})I_1 (\beta)+\beta^{-1}\log\beta\|\eta_\delta\,\tilde{g}\|_\infty+\beta^{-5/6}(\|f\|_2+v(-1)\|\hat{g}_-\|_2)\Big] \,. 
\end{displaymath}
Note that since $s <\frac 13$ in \eqref{eq:148} it holds that 
\begin{displaymath}
\delta(\beta)^{-1}  \beta^{-1/3}=  \beta^{s-\frac 13} \leq \beta_0^{s-\frac 13}\,,
\end{displaymath}
and hence we can choose $\beta_0$ such that $\epsilon(\beta) < 1$ for all $\beta \geq \beta_0$.
Moreover, for any $k\in\N$  we can, possibly by
increasing $\beta_0$ with $k$ (so that $2^k \delta(\beta) < 1/2$), obtain for all
$\beta \geq \beta_0$ that
\begin{displaymath}
   I_{k-1}(\beta)\leq C \,\Big[(\delta\log\beta+\delta^{-1}\beta^{-1/3} )I_k(\beta)  +
   \beta^{-1}\log\beta\,
   \|\eta_{2^k\delta}\tilde{g}\|_\infty+\beta^{-5/6}(\|f\|_2+v(-1)\|\hat{g}_-\|_2)\Big]\,.  
\end{displaymath}
Applying the above inequality  $k$  times recursively yields, with $\delta=\delta(\beta)$
given by \eqref{eq:148},
\begin{multline}
\label{eq:168}
   I_0(\beta)\leq C\Big[(\delta\log\beta+\delta^{-1}\beta^{-1/3})^k\,I_k(\beta) + \\
   \beta^{-1}\log\beta\,\|\eta_{2^k\delta}\tilde{g}\|_\infty+\beta^{-5/6}(\|f\|_2+v(-1)\|\hat{g}_-\|_2)\Big]\,.  
 \end{multline}

{\bf Estimate of $\eta_{2^k\delta}\,\tilde{g}$ in $L^\infty$.
   \\} 

From \eqref{eq:153} it follows that for sufficiently large $\beta$ (hence
imposing sufficiently small $\delta(\beta)$) 
\begin{multline}
\label{eq:169}
  \Big\|\eta_{2^k\delta}\frac{U^{\prime\prime}}{\alpha}\int_{-1}^x\sinh\alpha(x-\xi)\check{\psi}_-(\xi,\lambda)\,d\xi\Big\|_\infty \\
 \leq C\,\Big\|\eta_{2^k\delta}\int_{-1}^x[(1+x)-(1+\xi)]\,\check{\psi}_-(\xi,\lambda)\,d\xi\Big\|_\infty  \,.  
\end{multline}
To estimate the right-hand-side of \eqref{eq:169}  we first
observe that  there exists $C>0$ such that, for any $x\in(-1,-1+2\delta )$, 
 and $\lambda$ satisfying $(K)_{\varkappa,\beta}$ 
it holds by  \eqref{eq:29} that
\begin{equation}
\label{eq:170}
  \int_{-1}^x(1+\xi)|\check{\psi}_-(\xi,\lambda)|\,d\xi\leq
  \int_{-1}^{ \infty}(1+\xi)|\check{\psi}_-(\xi,  \lambda )|\,d\xi\leq C\beta^{-2/3} \,.
\end{equation}
To estimate the term involving $(1+x)\check{\psi}_-(\xi,\lambda)$   we first write
\begin{displaymath}
  \Big|\int_{-1}^x \check{\psi}_-(\xi,\lambda)\,d\xi\Big|\leq\Big|\int_x^\infty
  \check{\psi}_-(\xi,\lambda)\,d\xi\Big|+\Big|\int_{-1}^\infty \check{\psi}_-(\xi,\lambda)\,d\xi\Big|\,.
\end{displaymath}
For the first term on the right-hand-side we have by  \eqref{eq:28}
(see also \eqref{eq:44}),  that for some $\gamma>0$
\begin{equation}
\label{eq:171}
  \Big|\int_x^\infty  \check{\psi}_-(\xi,\lambda)\,d\xi\Big|\leq C\,  \beta^{-1/3} e^{-\gamma\beta^{1/3}(1+x)}\,,
\end{equation}
and hence, for all $x\in(-1,-1+2\delta)$,
\begin{equation}
\label{eq:172}
   \Big|(1+x)\int_x^\infty \check{\psi}_-(\xi,\lambda)\,d\xi\Big|\leq
   C\, \beta^{-1/3} \,(1+x)e^{-\gamma(1+x)\beta^{1/3}} \leq \check C\,\beta^{-2/3}\,.
\end{equation}
For the second term we use \eqref{eq:93} and dilation to obtain that 
\begin{equation}
\label{eq:173}
  \Big|\int_{-1}^\infty \check{\psi}_-(\xi,\lambda)\,d\xi\Big|\leq C\, |\lambda-\lambda_0(\beta) |\,.
\end{equation}
Consequently, for all $x\in(-1,-1+2\delta)$, 
\begin{displaymath}
   \Big|(1+x)\,\eta_{2^k\delta}\int_{-1}^\infty \check{\psi}_-(\xi,\lambda)\,d\xi\Big|\leq C\, \delta\,
   |\lambda-\lambda_0(\beta) |  \,.
\end{displaymath}
Substituting the above together with \eqref{eq:170} and \eqref{eq:172}
into \eqref{eq:169} yields, in view of $(K)_{\varkappa,\beta}$,
\begin{equation}
  \label{eq:174}
 \|\eta_{2^k\delta}\tilde{g}\|_\infty\leq C\, \delta\,\beta\,|\lambda-\lambda_0(\beta) |\, |v(-1)| \,. 
\end{equation}
Recall that $\hat g_-$ is  defined by
\eqref{eq:154}. 
By  \cite[Eq. (8.96)]{AH-arma} it holds that
\begin{displaymath} 
  \|\hat{g}_-\|_2\leq C\beta^{1/6}\,.
\end{displaymath}
Substituting the above together with \eqref{eq:174} into
\eqref{eq:168},
leads,  with the aid of \eqref{eq:148}, to 
\begin{displaymath}
   I_0\leq C \,\big[(\delta\log\beta+\delta^{-1}\beta^{-1/3})^kI_k +
  \delta\,\log\beta\, |\lambda-\lambda_0(\beta) |\, |v(-1)|+\beta^{-5/6}\|f\|_2\big] \,,
\end{displaymath}
which is precisely \eqref{eq:150}.
\end{proof}

\begin{lemma}  Let $\alpha_0 >0$. 
For any $k\in\N$ there exist positive $C$, $\varkappa_0$ and $\beta_0$ such that for
  all $\beta\geq \beta_0$, $\varkappa_0 /2 \leq \varkappa \leq \varkappa_0$, $0\leq \alpha \leq \alpha_0$,  $(f,\phi)\in L^2(-1,1)\times D(\B_{\lambda,\alpha,\beta})$ satisfying 
  \eqref{eq:146}, $v=-\phi^{\prime\prime}+ \alpha^2 \phi$, and $\lambda$ satisfying $(K)_{\varkappa,\beta}$, it holds that
  \begin{subequations}
\label{eq:175}
      \begin{multline}
    |\phi(-1+\delta(\beta))|\leq C \,\Big[\delta(\beta) \,\varepsilon(\beta)^kI_k(\beta)\\ \qquad +
    \big(\delta(\beta)^2\log\beta\,|\lambda-\lambda_0(\beta) | +\beta^{-2/3}\big) \, |v(-1)|\\ +\delta(\beta)\beta^{-5/6}\|f\|_2\Big]\,,
  \end{multline}
and
  \begin{equation}
    |\phi^\prime(-1+\delta(\beta))|\leq C\,\Big[\varepsilon(\beta)^kI_k(\beta)+
    |\lambda-\lambda_0(\beta) | \, |v(-1)|\,+\beta^{-5/6}\|f\|_2\Big] \,,
  \end{equation}
  \end{subequations}
  where $\delta(\beta)$ and $\varepsilon(\beta)$ are introduced in \eqref{eq:148} and
  \eqref{eq:149} and $I_k$ is given by \eqref{eq:151}.
\end{lemma}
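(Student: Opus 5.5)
We write $\phi=\tilde\phi-v(-1)\,\alpha^{-1}\int_{-1}^x\sinh\alpha(x-\xi)\,\check\psi_-(\xi,\lambda,\beta)\,d\xi$, which is (\ref{eq:146}c) rearranged, and evaluate both pieces and their derivatives at $x=-1+\delta$, where $\delta=\delta(\beta)$. The $\tilde\phi$ part is controlled by the pointwise bounds obtained in the proof of the previous lemma. By \eqref{eq:157} and \eqref{eq:159}, $|\tilde\phi(-1+\delta)|\le C\delta\|\tilde v\|_{L^1(-1,-1+\delta)}$ and $|\tilde\phi^\prime(-1+\delta)|\le C\|\tilde v\|_{L^1(-1,-1+\delta)}$. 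Since $\eta_\delta=1$ on $(-1,-1+\delta)$, both right-hand sides are bounded by multiples of $\|\eta_\delta\tilde v\|_1\le I_0(\beta)$. We then insert \eqref{eq:150}, which gives
\begin{displaymath}
I_0\le C\big[\varepsilon^kI_k+\delta\log\beta\,|\lambda-\lambda_0|\,|v(-1)|+\beta^{-5/6}\|f\|_2\big].
\end{displaymath}
For $\tilde\phi$ this produces $\delta\varepsilon^kI_k+\delta^2\log\beta\,|\lambda-\lambda_0|\,|v(-1)|+\delta\beta^{-5/6}\|f\|_2$, which matches the structure of (\ref{eq:175}a). For $\tilde\phi^\prime$ it produces $\varepsilon^kI_k+\delta\log\beta\,|\lambda-\lambda_0|\,|v(-1)|+\beta^{-5/6}\|f\|_2$. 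Because $\delta\log\beta=\beta^{-s}\log\beta\to0$, the middle term is absorbed into $C|\lambda-\lambda_0|\,|v(-1)|$, as (\ref{eq:175}b) requires.

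Next we treat the explicit correction term. Put $\Phi(x)=\alpha^{-1}\int_{-1}^x\sinh\alpha(x-\xi)\,\check\psi_-\,d\xi$, so that $\Phi^\prime(x)=\int_{-1}^x\cosh\alpha(x-\xi)\,\check\psi_-\,d\xi$. We expand $\sinh$ and $\cosh$ around their leading behavior, as was done in Section 2 and in \eqref{eq:169}--\eqref{eq:174}. For $\Phi$ we write $(x-\xi)=(1+x)-(1+\xi)$ and set $x=-1+\delta$. The contribution $\int(1+\xi)|\check\psi_-|$ is $\OO(\beta^{-2/3})$ by \eqref{eq:170}. The contribution $(1+x)\int_{-1}^x\check\psi_-$ splits into $\delta\int_x^\infty\check\psi_-$, which is exponentially small by \eqref{eq:171} since $\delta\beta^{1/3}\to\infty$, and $\delta\int_{-1}^\infty\check\psi_-$, which is $\OO(\delta|\lambda-\lambda_0|)$ by \eqref{eq:173}. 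The corrections from the $\sinh$ expansion carry extra factors $(1+x)^2$ or $(1+\xi)^2$. Using \eqref{eq:29}, these are of size $\OO(\delta^3|\lambda-\lambda_0|+\beta^{-2/3})$ at worst.

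This bound on $\Phi$ gives $\delta|\lambda-\lambda_0|+\beta^{-2/3}$, which is where the expected difficulty lies. The stated bound (\ref{eq:175}a) has $\delta^2\log\beta\,|\lambda-\lambda_0|$, which is smaller than $\delta|\lambda-\lambda_0|$. To close this gap I would first try to exploit the hypothesis $(K)_{\varkappa,\beta}$ more fully, or to bound $\int_{-1}^\infty\check\psi_-$ more precisely. Failing that, the fallback is that the $\Phi$ contribution is really $\OO(\delta|\lambda-\lambda_0|+\beta^{-2/3})\,|v(-1)|$. One should then check whether this is what is actually needed downstream, possibly with the bound absorbed after adjusting $s$.

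For $\Phi^\prime(-1+\delta)$ the argument is cleaner. We have $\Phi^\prime(-1+\delta)=\int_{-1}^{\infty}\check\psi_--\int_{-1+\delta}^\infty\check\psi_-+\OO(\|(1+\xi)\check\psi_-\|_1)$. The first term is $\OO(|\lambda-\lambda_0|)$ by \eqref{eq:173}. The second is exponentially small by \eqref{eq:171}. The last is $\OO(\beta^{-2/3})$ by \eqref{eq:29}, and this is dominated by $|\lambda-\lambda_0|\gtrsim\varkappa^{-1}\beta^{-2/3}$ under $(K)_{\varkappa,\beta}$. Multiplying by $|v(-1)|$ and adding the $\tilde\phi^\prime$ estimate yields (\ref{eq:175}b).
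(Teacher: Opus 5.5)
You prove (\ref{eq:175}b) correctly, with the same ingredients as the paper (\eqref{eq:150}, \eqref{eq:171}, \eqref{eq:173}, \eqref{eq:29}). Splitting $\phi=\tilde\phi-v(-1)\Phi$ and using \eqref{eq:159} is a repackaging of the paper's integration of $\phi''=-v+\alpha^2\phi$ with $v=\tilde v+v(-1)\check\psi_-$. For (\ref{eq:175}a) your proposal does not reach the stated coefficient. However, the obstruction you found is genuine, and the paper does not overcome it either.

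The paper gets $\delta^2\log\beta\,|\lambda-\lambda_0|+\beta^{-2/3}$ only through the first inequality of its proof, $|\int_{-1}^x(x-\xi)\phi''\,d\xi|\le 2\|(1+\xi)v\|_{L^1(-1,x)}+\dots$. This feeds \eqref{eq:176}--\eqref{eq:177} and is reused in \eqref{eq:180}. It bounds the kernel $x-\xi$ by $2(1+\xi)$, which is false for $\xi$ near $-1$, exactly where $\check\psi_-$ is concentrated. The correct identity $\phi(-1+\delta)=\delta\,\phi'(-1+\delta)-\int_{-1}^{-1+\delta}(1+\xi)\phi''\,d\xi$ brings back precisely your term $\delta\,|v(-1)|\,|\int_{-1}^{-1+\delta}\check\psi_-|$.

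Neither remedy you suggest can remove this term. After dilation, the lower bound in \eqref{eq:93} gives $|\int_{-1}^\infty\check\psi_-|\ge c\,|\lambda-\lambda_0|$ on all of $(K)_{\varkappa,\beta}$. Moreover, $(K)_{\varkappa,\beta}$ contains $|\lambda-\lambda_0|\sim\varkappa\beta^{-1/3}$, where $\delta|\lambda-\lambda_0|\gg\delta^2\log\beta\,|\lambda-\lambda_0|+\beta^{-2/3}$. Only a cancellation between $\int\tilde v$ and $v(-1)\int\check\psi_-$ could rescue the stated bound, and nothing near $x=-1$ forces one. Indeed, if $U$ is affine near $-1$, then $\phi=-v(-1)\Phi$ solves the equation there with $f=0$ and $\tilde v=0$ (so $I_k=0$). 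Yet $|\phi(-1+\delta)|\approx\delta|\lambda-\lambda_0|\,|v(-1)|$ once $|\lambda-\lambda_0|\gg\delta^{-1}\beta^{-2/3}$. The stated $\delta^2\log\beta$ coefficient would therefore need global information (the rest of $f$, the conditions at $x=1$), which neither you nor the paper uses.

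Your fallback is what this argument actually proves:
\[
|\phi(-1+\delta)|\le C\big[\delta\varepsilon^kI_k+(\delta|\lambda-\lambda_0|+\beta^{-2/3})|v(-1)|+\delta\beta^{-5/6}\|f\|_2\big].
\]
It is also all that is needed later, with no change of $s$. In the proof of Lemma \ref{lem:outer}, $\phi(-1+\delta)$ enters \eqref{eq:211}--\eqref{eq:212} at worst with the factor $\delta^{-1/4}$. This happens via $\chi_\delta''$ in $g_1$ and $\chi_\delta'$ in $(\hat\phi-\phi)'$; the $g_2$ terms are smaller because of the factor $\beta^{-1}$ from \eqref{eq:205}. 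The new term therefore contributes $\delta^{3/4}|\lambda-\lambda_0|\,|v(-1)|$. This is dominated by the term $\delta^{1/4}|\lambda-\lambda_0|\,|v(-1)|$ that $\phi'(-1+\delta)$ already produces.

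The bound on $\|\phi\|_{L^\infty(-1,-1+\delta)}$ at the end of that proof repeats \eqref{eq:180} and must be corrected the same way. It gains only $\delta|\lambda-\lambda_0|\,|v(-1)|\le\delta^{1/4}|\lambda-\lambda_0|\,|v(-1)|$. Hence \eqref{eq:207}, and with it \eqref{eq:219}, \eqref{eq:221} and Theorem \ref{thm:main}, are unaffected.
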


\begin{proof}
  As in \eqref{eq:155} we write that for any $x\in(-1,-1+\delta)$
  \begin{displaymath}
     |\phi(x)|\leq \Big|\int_{-1}^x (x-\xi)\phi^{\prime\prime}(\xi)\,d\xi\Big|\leq
     2\,\|(1+\xi)v\|_{L^1(-1,x)}+C\delta\|\phi\|_{L^1(-1,x)}\,, 
  \end{displaymath}
from which we conclude, as in the previous lemma,  that for $\beta$ large
enough, $\delta$ given by \eqref{eq:148},   and for any $x\in(-1,-1+\delta]$
\begin{equation}
\label{eq:176}
   \|\phi\|_{L^1(-1,x)}\leq C\, \delta\, \|(1+\cdot)v\|_{L^1(-1,x)}\,.
\end{equation}
Combining the above pair of inequalities yields   and for any $x\in(-1,-1+\delta]$
\begin{equation}
  \label{eq:177}
  |\phi(x)|\leq C\, \|(1+\cdot)v\|_{L^1(-1,x)}\,.
\end{equation}
Similarly, using (\ref{eq:146}b) we integrate over $(-1,-1+\delta)$ to
obtain with the aid of \eqref{eq:176} 
\begin{equation}
\label{eq:178}
  \Big|\phi^\prime(-1+\delta)+  \int_{-1}^{-1+\delta}v\,dx\Big|\leq \alpha_0^2 \, \|\phi\|_{L^1(-1,-1+\delta)}\leq  C\,\delta\, \|(1+\cdot )v\|_{L^1(-1,-1+\delta)}\,,
\end{equation}
With the aid of \eqref{eq:171} and \eqref{eq:173} we obtain that
\begin{displaymath}
  \Big|\int_{-1}^{-1+\delta}\check{\psi}_-(x,\lambda)\,dx\Big|\leq C\,|\lambda-\lambda_0(\beta)|\,.
\end{displaymath}
To obtain the above inequality we used the facts that $\delta(\beta)=\beta^{-s}$
for some $0<s < 1/3\,$,  that $\beta^{-1/3} e^{-\gamma \beta^{1/3-s}}\leq C \beta^{-2/3}$
for $\beta$ large enough,  and that $\lambda$ satisfies $(K)_{\varkappa,\beta}$. 
We can now use \eqref{eq:147} and \eqref{eq:178} to obtain that
\begin{displaymath}
   |\phi^\prime(-1+\delta)|\leq C\,\Big[\|\tilde{v}\|_{L^1(-1,-1+\delta)}
   +\big( |\lambda-\lambda_0(\beta)|\,+\delta\|(1+x)\check{\psi}_-\|_{L^1(-1,-1+\delta)}\big)\,|v(-1)| \Big]\,. 
\end{displaymath}
By  \eqref{eq:29} (with $s=1$)  and $(K)_{\varkappa,\beta}$  we then obtain that
\begin{displaymath}
  |\phi^\prime(-1+\delta)|\leq C\,\big[\|\tilde{v}\|_{L^1(-1,-1+\delta)}+ |\lambda-\lambda_0(\beta)|\,|v(-1)|\big]\leq C\,\big[I_0 + |\lambda-\lambda_0(\beta)|\,|v(-1)|\big]  \,.
\end{displaymath}
By \eqref{eq:150} we then obtain that for every $k\in\N$ there exists
$C>0$ such that 
\begin{equation}
\label{eq:179}
    |\phi^\prime(-1+\delta)|\leq C \, \big[\epsilon^kI_k+  |\lambda-\lambda_0(\beta)|\,|v(-1)| +\beta^{-5/6}\|f\|_2\big] \,. 
\end{equation}
To complete the proof we combine \eqref{eq:177} and \eqref{eq:147} to
obtain, with the aid of \eqref{eq:29}  and for any $x\in(-1,-1+\delta)$
\begin{equation}
\label{eq:180}
  |\phi(x)|\leq C\,\big[\delta\,\|\tilde{v}\|_{L^1(-1,x)}+\beta^{-2/3}|v(1)|\big] \,.
\end{equation}
Using \eqref{eq:150} we then obtain that for every $k\in\N$ there exists
$C>0$ such that
\begin{equation}\label{eq:181}
   |\phi(-1+\delta)|\leq C\, \big[\delta\epsilon^kI_k+(\delta^2\log\beta|\lambda-\lambda_0(\beta) |+\beta^{-2/3})|v(-1)|+\delta\beta^{-5/6}\|f\|_2\big] \,, 
\end{equation}
which together with \eqref{eq:179} completes the proof of
\eqref{eq:175}. 
\end{proof}

\subsection{Outer estimates}
\label{sec:4.2}
\subsubsection{Preliminaries: a dilated model}

Our goal in this subsection is to obtain estimates of
$(\B_{\lambda,\alpha,\beta}^\Df))^{-1}$, for a laminar flow $U$ satisfying
\eqref{eq:9}, on the interval $(a,1)$ for some $-1<a<0$. Nevertheless,
given that the results in \cite{AH-arma} and \cite{AH-ems} are stated
for the interval $(-1,1)$, we begin by obtaining results on $(-1,1)$
with a modified laminar flow, which satisfies similar properties to
those of $U$ on $(a,1)$. We then use dilation to establish the results
on $(a,1)$.  We begin by obtaining the following auxiliary result for
a laminar floe $\check U$ satisfying 
\begin{equation}
  \label{eq:182}
\check U(x)-\check U(-1)\geq  \Xi\, (1+x) 
\end{equation}
for some $\Xi>0$ (see \eqref{eq:10}).

\begin{lemma}
  \label{lem:large-nu}
Let $M$, $\Xi $,  $\alpha_0$ and $\mu_0$ denote positive
  constants, and  $ \gamma \in (0,1/6)$. Then, there exist
  positive $\beta_0$ and $C$, such that, for all   $\check U\in C^4([-1,1])$
  satisfying \eqref{eq:182} and $\|\check U\|_{C^4([-1,+1])}\leq M$, for
  all $\beta\geq \beta_0$\,, $\alpha\in[0,\alpha_0)$, and  $\lambda=\mu+i\nu$ satisfying 
  \begin{equation}
\label{eq:183}
-\beta^{-\gamma}<\tilde{\nu} <-\beta^{ -1/6} \mbox{ and } |\mu|<\mu_0\,\beta^{-1/3}\,,
  \end{equation}
  where $\tilde{\nu}=\nu-\check U(-1)$, $\B_{\lambda,\alpha,\beta}^\Df$ (associated by \eqref{eq:6} to $\check U$) is invertible and its
  inverse satisfies
   \begin{equation}
\label{eq:184}
 \big\|(\B_{\lambda,\alpha,\beta}^\Df))^{-1}\big\|+
       \Big\|\frac{d}{dx}\, (\B_{\lambda,\alpha,\beta}^\Df)^{-1}\Big\|\leq  C  \beta^{-1} \,.
   \end{equation}
   \end{lemma}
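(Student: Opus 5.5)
We need to prove: for $\Im\lambda$ below $\check U(-1)$ by an amount between $\beta^{-1/6}$ and $\beta^{-\gamma}$, the Orr--Sommerfeld operator with no-slip conditions is invertible with inverse of size $\beta^{-1}$ (from $L^2$ to $H^1$ seminorm). In this regime $\lambda$ lies outside the range of $\check U$ by at least $\beta^{-1/6}$, so the Schr\"odinger part is elliptic-like, and the approach follows \cite[Section 6]{AH-arma}.

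Write $\phi$ with $\B_{\lambda,\alpha,\beta}\phi=f$ and set $v=-\phi''+\alpha^2\phi$. Then $v$ solves
\[
(\LL_{\beta}-\beta\lambda)v=f-i\beta\check U''\phi ,
\]
and the no-slip conditions are equivalent to $\langle\zeta_\pm,v\rangle=0$, that is, $v\in D(\LL^\zeta_{\beta,\check U})$ with the $\zeta_\pm$ of \eqref{eq:18}, which satisfy \eqref{eq:95}, \eqref{eq:99} and \eqref{eq:100} uniformly in $\alpha\le\alpha_0$. The plan has three steps: an invertibility and resolvent estimate for the no-slip Schr\"odinger operator $\LL^\zeta_{\beta,\check U}-\beta\lambda$ in this spectral region; control of the perturbation term $i\beta\check U''\phi$ through Hardy-type inequalities; and the bound on $\phi$ and $\phi'$ in terms of $v$.

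\textbf{Step 1 (no-slip Schr\"odinger estimate).} With $\tilde\nu<-\beta^{-1/6}$, the Dirichlet estimate \eqref{eq:59} (with $a=-1$) gives
\[
\|(\LL^{\D}-\beta\lambda)^{-1}\|\le C\beta^{-1}|\tilde\nu|^{-1},
\]
together with the derivative bound $C(\beta|\tilde\nu|)^{-1/2}\cdot(\beta|\tilde\nu|)^{-1/2}$. I would then repeat the construction of the no-slip lemma: $v=A_+(\psi_+-\tilde v_+)+A_-(\psi_--\tilde v_-)+u$, now with $\beta^{1/3}|\tilde\nu|\ge\beta^{1/6}\to\infty$. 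In this regime $\psi_-$ is an exponentially decaying boundary layer of width $(\beta|\tilde\nu|)^{-1/2}$, and $A_0(i\beta^{1/3}\lambda)$ is bounded below, since the argument is far from the zeros. The $2\times2$ system then has diagonal entries of order $(\beta|\tilde\nu|)^{-1/2}$ and small off-diagonal entries. This should yield
\[
\|v\|_2\le C\beta^{-1}|\tilde\nu|^{-1}\|g\|_2 ,
\]
plus an $L^1$-weighted variant. This mirrors \cite[Lemma 6.2]{AH-arma}, and I would quote those estimates after rescaling.

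\textbf{Step 2 (the $U''$ term).} This is the main obstacle, since $\beta\check U''\phi$ is not small a priori. I would follow \cite{AH-ems}: take the inner product of the equation with $\phi$ (or use the identity $\Im\langle\phi,\B\phi\rangle$), and use that $\check U-\nu\ge|\tilde\nu|+\Xi(1+x)>0$, so that the Rayleigh-type quotient with $U''/(U-\nu)$ is controlled. Writing $\phi$ through the equation $(\check U-\nu)(-\phi''+\alpha^2\phi)+\check U''\phi\approx$ small remainder, one estimates $\beta\|\check U''\phi\|_2\lesssim \beta\|\phi\|_2$, and bounds $\|\phi\|_2$ via \eqref{eq:56}, \eqref{eq:57}:
\[
\|\phi\|_2\lesssim\|(1+x)\phi'\|_2\lesssim\|(\check U-\nu)v\|_{H^{-1}} .
\]
The key point to verify is that $|\tilde\nu|\le\beta^{-\gamma}$ keeps the relevant constant below $1$, so that the perturbation can be absorbed; this is where the condition $\gamma<1/6$ is expected to enter. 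Alternatively, I would treat $\check U''\phi/(\check U-i\lambda)$ as a compact perturbation of the Rayleigh equation, whose only possible obstruction is a Rayleigh eigenvalue, excluded here because $\lambda$ lies outside the range of $\check U$ with a monotone profile.

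\textbf{Step 3 (conclusion).} From $\|v\|_2\lesssim\beta^{-1}|\tilde\nu|^{-1}\|f\|_2$ plus absorption, combined with $\|\phi\|_{H^2}\lesssim\|v\|_2$ (for the no-slip problem) and a sharper energy identity for $\phi$, one recovers \eqref{eq:184}. For the $\beta^{-1}$ rate without loss in $|\tilde\nu|^{-1}$, I would take the imaginary part of $\langle(\check U-\nu)^{-1}\B\phi,\phi\rangle$, giving a bound
\[
\beta\|\phi'\|_2^2\lesssim\|f\|_2\|\phi\|_2 .
\]
With Poincar\'e, this yields $\|\phi\|+\|\phi'\|\le C\beta^{-1}\|f\|$. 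Invertibility then follows from this injectivity estimate and a Fredholm index-zero argument, as in the proof of \eqref{eq:59}.
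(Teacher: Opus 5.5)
There is a genuine gap. The mechanism that closes the estimate is missing, and the steps meant to replace it fail.

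\textbf{The absorption in Steps 2--3 does not work.} Suppose your Step 1 bound held. Inserting $g=f-i\beta\check U''\phi$ gives $\|v\|_2\lesssim\beta^{-1}|\tilde\nu|^{-1}\|f\|_2+|\tilde\nu|^{-1}\|\phi\|_2$. Since $\|\phi\|_2\lesssim\|v\|_2$, the coefficient to be absorbed is of order $|\tilde\nu|^{-1}>\beta^{\gamma}$. Smallness of $|\tilde\nu|$ makes this worse, not better: $\gamma<1/6$ only makes the range \eqref{eq:183} nonempty, and what is actually used is the lower bound $|\tilde\nu|\geq\beta^{-1/6}$.

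The term $\beta\check U''\phi$ is of the same order as the Rayleigh part. It has to be handled through $(\check U-\nu)\phi''-\check U''\phi=((\check U-\nu)^2w')'$, with $w=(\check U-\nu)^{-1}\phi$, together with Hardy's inequality \eqref{eq:56}. That is the identity you invoke in Step 3, but there you drop the viscous term. Since $w(\pm1)=w'(\pm1)=0$, the fourth-order part contributes $-\langle\phi'',w''\rangle-\alpha^2\langle\phi',w'\rangle$, whose imaginary part does not vanish in general. So you only get $\beta\|(\check U-\nu)w'\|_2^2\lesssim\|f\|_2\|w\|_2+\|w''\|_2\|\phi''\|_2+\dots$, plus a $\mu$-dependent term whose control is where $|\tilde\nu|\geq\beta^{-1/6}$ enters.

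Closing this needs an independent a priori bound on $\|\phi''\|_2$ that is small compared with $\beta^{1/2}|\tilde\nu|^{1/2}\|\phi'\|_2$, used together with $\|w''\|_2\lesssim|\tilde\nu|^{-1}(\|w'\|_2+\|\phi''\|_2)$. This is the bulk of the paper's proof, namely \eqref{eq:200}: $\|\phi''\|_2\leq C(\beta^{-7/12}\|f\|_2+\beta^{1/4}\log\beta\,\|\phi'\|_2)$. It comes from the Dirichlet reduction \eqref{eq:188}:
\begin{itemize}
\item Subtracting $\check U''\phi/(\check U+i\lambda)$ trades the $O(\beta)$ term for $(\check U''\phi/(\check U+i\lambda))''$. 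That term is harmless after the factor $(\beta|\tilde\nu|)^{-1}$ from \eqref{eq:59}, while $\|\check U''\phi/(\check U+i\lambda)\|_2\lesssim\|\phi'\|_2$ by Hardy \eqref{eq:57}.
\item Subtracting $\phi''(\pm1)\check\psi_\pm$ restores Dirichlet conditions, so that \eqref{eq:59} applies. The values $\phi''(\pm1)$ are bounded by the no-slip estimates of \cite{AH-arma}.
\end{itemize}

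\textbf{Your Step 1 bound is also false in this regime.} You claim $\|v\|_2\leq C\beta^{-1}|\tilde\nu|^{-1}\|g\|_2$ for the no-slip operator. The corrector $A_-\psi_-$ is a layer of width $\ell=(\beta|\tilde\nu|)^{-1/2}$ with $\langle\zeta_-,\psi_-\rangle\sim\ell\,\psi_-(-1)$. Hence $\|A_-\psi_-\|_2\sim\ell^{-1/2}|\langle\zeta_-,u\rangle|$. For $g\equiv1$ one has $|\langle\zeta_-,u\rangle|\sim\beta^{-1}\log(1/|\tilde\nu|)$. This exceeds your bound by a factor of order $\beta^{1/4}|\tilde\nu|^{5/4}\log(1/|\tilde\nu|)$, which tends to infinity throughout \eqref{eq:183}. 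This loss is exactly why the paper removes the boundary layers before invoking a resolvent bound.

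A correct, weaker no-slip bound might still yield the needed $\|\phi''\|_2$ estimate and close the corrected energy identity by a route different from the paper's. However, the proposal neither states such a bound nor connects it to the viscous term. The qualitative ``compact perturbation of Rayleigh'' alternative gives no constants uniform in $\beta$ and $\tilde\nu\to0$.
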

 \begin{proof}
  The proof is  somewhat similar to the proof of \cite[Proposition
  5.12.2]{AH-ems}. Note that unlike in \cite{AH-ems} we do not
  assume $\check U^{\prime\prime}\neq0$ in $[-1,1]$ in this case and the range
  of $\nu$ values is different.

  {\em Step 1: }Under the assumptions of the lemma, prove that, there
  exist positive $C$ and $\beta_0$ such that, for any $\varepsilon \in (0,1)$,
  $\beta \geq \beta_0$, and $(\phi,f)\in D(\B^\D_{\lambda,\alpha,\beta})\times L^2(-1,1)$ satisfying
   \begin{equation}
\label{eq:185}
    \B_{\lambda,\alpha,\beta}\,\phi=f \,,
  \end{equation}
  we have
 \begin{multline}
\label{eq:186}
  \|(\check U-\nu)w^\prime\|_2^2+|\tilde{\nu}|^2\|w^\prime\|_2^2 
  \leq C\big(\varepsilon^{-1}\beta^{-2}\|f\|_2^2 \\ +\varepsilon\|w\|_2^2 +
  \alpha^2\big[\beta^{-4/3}\|w^\prime\|_2^2+\beta^{-2/3}\|\phi^\prime\|_2^2\big]+\beta^{-1}\|w^{\prime\prime}\|_2\,\|\phi^{\prime\prime}\|_2\big)\,,
\end{multline}
where $w=(\check U-\nu)^{-1}\phi$. Taking the scalar product of
\eqref{eq:185} with $w=(\check U-\nu)^{-1}\phi$, and integrating by parts
yields for the imaginary part
  \begin{multline}\label{eq:187}
    -\Im\langle w,f\rangle =
    \Im\langle w^{\prime\prime},\phi^{\prime\prime}\rangle
   +  \alpha^2\Im\langle w^\prime,\phi^\prime\rangle \\ +
    \beta\Big(\|(\check U-\nu)w^\prime\|_2^2 + \alpha^2\|\phi\|_2^2 
    -|\mu|^2\Big\langle\frac{\phi}{\check U-\nu},\frac{\check U^{\prime\prime}\phi}{|\check U+i\lambda|^2}\Big\rangle\Big) \,.
  \end{multline}
In view of  \eqref{eq:183} we obtain, with the aid of Poincar\'e's inequality,  
\begin{displaymath}
  \|(\check U-\nu)w^\prime\|_2^2 \geq
  \frac{1}{2}\Big[\|(\check U-\nu)w^\prime\|_2^2+  |\tilde{\nu}|^2\, \|w^\prime\|_2^2\Big]\geq
  \frac{1}{2}\|(\check U-\nu)w^\prime\|_2^2+\frac 1C\,  |\tilde{\nu}|^2 \,\|w\|_2^2 \,.
\end{displaymath}
Furthermore, it holds that (note that by \eqref{eq:183} $
|\tilde{\nu}|^2\geq \beta^{ -1/3}$),
\begin{displaymath}
  |\mu|^2\,\Big|\Big\langle\frac{\phi}{\check U-\nu},\frac{\check U^{\prime\prime}\phi}{|\check U+i\lambda|^2}\Big\rangle\Big|\leq
  C\frac{|\mu|^2}{|\check U(-1)-\nu|} \|w\|_2^2 \leq \widehat C\, \beta^{ -2/3+1/6} \|w\|^2_2 \,.
\end{displaymath}
Substituting the above pair of  inequalities into 
  \eqref{eq:187}, we obtain for  $\beta$   sufficiently large
 \begin{multline*}
   \frac{1}{4}(\|(\check U-\nu)w^\prime\|_2^2+  |\tilde{\nu}|^2 \|w^\prime\|_2^2) + \alpha^2\|\phi\|_2^2 \leq
   \beta^{-1} \Big(\|w\|_2\,\|f\|_2 + \|w^{\prime\prime}\|_2\,\|\phi^{\prime\prime}\|_2+ \alpha^2 \|\phi^\prime\|_2
   \,\|w^\prime\|_2\Big)   
 \,,
  \end{multline*}
and consequently,  we obtain, that there exists $C>0$ such that,  for any
$\varepsilon>0$
\begin{multline*}
  \frac{1}{4}\big(\|(\check U-\nu)w^\prime\|_2^2+|\tilde{\nu}|^2\|w^\prime\|_2^2\big) + \alpha^2\|\phi\|_2^2\\
  \leq C\Big(\varepsilon^{-1}\beta^{-2}\|f\|_2^2 +\varepsilon\|w\|_2^2 +
  \alpha^2[\beta^{-4/3}\|w^\prime\|_2^2+\beta^{-2/3}\|\phi^\prime\|_2^2]+\beta^{-1}\|w^{\prime\prime}\|_2\,\|\phi^{\prime\prime}\|_2\Big)\,,
\end{multline*}
 from which \eqref{eq:186} readily follows.\\

{\em Step 2: Estimate $\phi^{\prime\prime} $ }\noindent \\

To estimate $\phi^{\prime\prime} $ we continue as in \cite[Proposition 5.12.2]{AH-ems}. 
Let 
\begin{equation}
\label{eq:188}
  \hat{v}_\Df :=
  -\phi^{\prime\prime}+ \phi^{\prime\prime}(-1)\check{\psi}_- +\phi^{\prime\prime}(1)\check{\psi}_++\alpha^2\phi+\frac{\check U^{\prime\prime}}{\check U+i\lambda}\phi  \,.
\end{equation}
It can be easily verified (see \eqref{eq:106}--\eqref{eq:115}) that
\begin{subequations}
\label{eq:189}
  \begin{equation} 
  (\LL_\beta-\beta \lambda)\hat{v}_\Df= h\,,
\end{equation}
where
\begin{equation}
  h= -f- \left(\frac{\check U^{\prime\prime}\phi}{\check U+i
    \lambda}\right)^{\prime\prime}+\phi^{\prime\prime}(-1)\hat{g}_-+\phi^{\prime\prime}(1)\hat g_+\,.
\end{equation}
\end{subequations}
By \eqref{eq:59} (with $a=-1$)  it holds, for
    $\tilde{\nu}=\check U(-1)-\nu\,$, that
\begin{equation}
\label{eq:190}
  \|\hat{v}_\Df\|_2\leq  C\, (\beta|\tilde{\nu}|)^{-1} \, \|h\|_2\,.
\end{equation}
By (\ref{eq:189}b) we have
\begin{multline*}
  \|h\|_2\leq C \Big(\|f\|_2+|\phi^{\prime\prime}(1)|\,\|\hat{g}_+\|_2 +|\phi^{\prime\prime}(-1)|\,\|\hat{g}_-\|_2 \\
+  \Big\|\frac{\phi}{(\check U+i\lambda)^3}\Big\|_2 + \Big\|\frac{\phi^\prime}{ (\check U+i\lambda)^2}\Big\|_2 + 
  \Big\|\frac{\phi^{\prime\prime}}{(\check U+i\lambda)}\Big\|_2 \Big)\,.
    \end{multline*}
To estimate the last three terms, we use the estimate
  \begin{displaymath}
\| (\check U+ i\lambda)^{-m}\|_\infty \leq C \, |\tilde{\nu}|^{-m}\,.
\end{displaymath}    
We can then write,
\begin{multline*}
 \|h\|_2\leq C \Big(\|f\|_2+|\phi^{\prime\prime}(1)|\,\|\hat{g}_+\|_2+|\phi^{\prime\prime}(-1)|\,\|\hat{g}_-\|_2\\ +
|\tilde{\nu}|^{-1}\|\phi^{\prime\prime}\|_2 +|\tilde{\nu}|^{-2}\|\phi^\prime\|_2
 + |\tilde{\nu}|^{-2}\Big\|\frac{\phi}{\check U+i\lambda}\Big\|_2  \Big)\,.
\end{multline*}
By Hardy's inequality \eqref{eq:57} for the last inequality it holds that
\begin{equation*} 
\Big\|\frac{\phi}{\check U+i\lambda}\Big\|_2 \leq 
  \Big\|\frac{\phi}{\check U-\check U(-1)}\Big\|_2 \leq C\,
  \Big\|\frac{\phi}{1+x}\Big\|_2 \leq \widehat C \,
  \|\phi^\prime\|_2\,. 
\end{equation*}
Combining the above yields
\begin{multline}
\label{eq:191} 
 \|h\|_2\leq C \Big(\|f\|_2+|\phi^{\prime\prime}(1)|\,\|\hat{g}_+\|_2+|\phi^{\prime\prime}(-1)|\,\|\hat{g}_-\|_2 \\ 
+ |\tilde{\nu}|^{-1}\|\phi^{\prime\prime}\|_2+|\tilde{\nu}|^{-2}\|\phi^\prime\|_2
\Big)\,.
\end{multline}
By  \cite[Eq. (8.96)]{AH-arma}
it holds that
\begin{equation}
\label{eq:192}
  |\phi^{\prime\prime}(1)|\,\|\hat{g}_+\|_2+|\phi^{\prime\prime}(-1)|\,\|\hat{g}_-\|_2\leq C\Big(\beta^{-1/4}
  |\phi^{\prime\prime}(1)|+|\phi^{\prime\prime}(-1)| \beta^{-1/4}\, |\tilde{\nu}|^{-5/4}\Big)\,.
\end{equation}

{\bf Estimate $\phi^{\prime\prime}(\pm1)$. \\}

  Let $\zeta_\pm$ be given by
 \eqref{eq:18}. Given that $\langle\phi^{\prime\prime}-\alpha^2\phi,\zeta_\pm\rangle=0$, we may write
 \begin{equation}
 \label{eq:193}
    (\LL_{\beta,\check{U}}^\zeta-\beta\lambda)(\phi^{\prime\prime}-\alpha^2\phi)=i\beta U^{\prime\prime}\phi+f\,.
  \end{equation}
   To estimate $\phi^{\prime\prime}(\pm1)$ we use \cite[Eq. (6.35)]{AH-arma} (with
    $v=\phi^{\prime\prime}-\alpha^2\phi$, $\lambda_-=\lambda\,$, $\lambda_+=\lambda-i\,U(1)\,,$ $g_1=f$ and $g_2=i\beta
    U^{\prime\prime}\phi$)  and Sobolev
    embeddings to obtain that 
\begin{subequations}
 \label{eq:194}
  \begin{equation}
|\phi^{\prime\prime}(-1)|
\leq C\,|\tilde{\nu}|^{1/2}\, \big(\beta^{1/2}\log\beta\,\|\phi\|_{1,2}+\beta^{-1/3}\|f\|_2\big)\,,
\end{equation}
For the estimate of $|\phi^{\prime\prime}(1)|$ , we obtain from \eqref{eq:193}, 
 \cite[Eq. (6.35)]{AH-arma}  (see  the erratum \cite{typo}) and Sobolev embeddings
\begin{equation}
    |\phi^{\prime\prime}(1)|\leq C \, \big(\beta^{1/2}\log\beta \, \|\phi\|_{1,2}+\beta^{-1/3}\|f\|_2\big)\,.
\end{equation}
\end{subequations}
Hence, by \eqref{eq:192}, we obtain that
\begin{equation}
  \label{eq:195}
  |\phi^{\prime\prime}(1)|\,\|\hat{g}_+\|_2+|\phi^{\prime\prime}(-1)|\,\|\hat{g}_-\|_2\leq C\big(
  \beta^{1/4}\, |\tilde{\nu}|^{-3/4}\,\|\phi\|_{1,2}+\beta^{-7/12}\|f\|_2\big)\,.
\end{equation}
Substituting \eqref{eq:195} into \eqref{eq:191} we obtain 
\begin{equation}
\label{eq:196}
    \|h\|_2\leq
C\big(\|f\|_2+ |\tilde{\nu}|^{-1}\|\phi^{\prime\prime}\|_2 
+[|\tilde{\nu}|^{-2}+\beta^{1/4}|\tilde{\nu}|^{-3/4}]\|\phi^\prime\|_2\big)\,.
\end{equation}

 To bound $\|\phi^{\prime\prime}\|_2$ in the right hand side of \eqref{eq:196},
  we  use \cite[Eq. (5.10.54)]{AH-ems} 
and \eqref{eq:188} to obtain
\begin{multline}
\label{eq:197}
  \|\phi^{\prime\prime}\|_2\leq \|\phi^{\prime\prime}-\alpha^2\phi\|_2 \\ \leq
  \|\hat{v}_\Df\|_2+\|{\check U}^{\prime\prime}\|_\infty\Big\|\frac{\phi}{\check
    U+i\lambda}\Big\|_2+|\phi^{\prime\prime}(1)|\,\|\check{\psi}_+\|_2+|\phi^{\prime\prime}(-1)|\,\|\check{\psi}_-\|_2 \,. 
\end{multline}
The first inequality above can be justified as follows 
  \begin{displaymath}
     \|\phi^{\prime\prime}-\alpha^2\phi\|_2^2=\|\phi^{\prime\prime}\|_2^2 +\alpha^4\|\phi\|_2^2
     -2\alpha^2\Re\langle\phi^{\prime\prime},\phi\rangle=\|\phi^{\prime\prime}\|_2^2 +\alpha^4\|\phi\|_2^2 +2\alpha^2\|\phi^\prime\|_2^2\geq \|\phi^{\prime\prime}\|_2^2 \,.
  \end{displaymath}
By \cite[Eqs. (8.87)  and (6.17)]{AH-arma}, (note that the definitions
of $\psi_-$ in \eqref{eq:107} and in  \cite[Eq. (6.8b)]{AH-arma} are
precisely the same),  it holds that
\begin{displaymath}
  \|\check{\psi}_-\|_2 \leq C\, [\tilde{\nu}\beta]^{-1/4}\,.
\end{displaymath}
Combining the above with \eqref{eq:27} and (\ref{eq:194}a,b) yields
\begin{equation}
\label{eq:198}
    |\phi^{\prime\prime}(-1)|\,\|\check{\psi}_-\|_2 +   |\phi^{\prime\prime}(1)|\,\|\check{\psi}_+\|_2  \leq C\, \big(\beta^{1/4}\log\beta\|\phi\|_{1,2}+\beta^{-7/12}\|f\|_2\big) 
\end{equation}
Furthermore, \eqref{eq:190}, and \eqref{eq:196} it holds that 
\begin{equation}
\label{eq:199}
    \|\hat{v}_\Df\|_2\leq C\, \big(\beta^{-1}|\tilde{\nu}|^{-1}\|f\|_2+\beta^{-1}|\tilde{\nu}|^{-2} \|\phi^{\prime\prime}\|_2
+\beta^{-3/4}|\tilde{\nu}|^{-7/4}\|\phi^\prime\|_2\big)  \,.
  \end{equation}
  Finally, using Hardy's inequality \eqref{eq:57} we may conclude that
  \begin{displaymath}
    \|\check U^{\prime\prime}\|_\infty\Big\|\frac{\phi}{\check
    U+i\lambda}\Big\|_2\leq C\,\Big\|\frac{\phi}{\check
    U-U(-1)}\Big\|_2\leq \check C\, \|\phi^\prime\|\,.
  \end{displaymath} 
Substituting the above, together with \eqref{eq:199} and
\eqref{eq:198} into \eqref{eq:197}  yields , with the aid of Poincar\'e's inequality,
\begin{displaymath}
  \|\phi^{\prime\prime}\|_2\leq 
C\,\big(\beta^{-7/12}\|f\|_2+
\beta^{-1}|\tilde{\nu}|^{-2}\|\phi^{\prime\prime}\|_2+  \beta^{1/4}\log\beta \|\phi^\prime\|_2)\,.
\end{displaymath}
Since $|\tilde \nu| \geq \beta^{ -1/6}$, we then obtain (for a possibly greater $\beta_0$) that for $\beta \geq \beta_0$
\begin{equation}
\label{eq:200}
  \|\phi^{\prime\prime}\|_2\leq 
C\,\big(\beta^{ -7/12}\,\|f\|_2+  \beta^{1/4}\log\beta \|\phi^\prime\|_2)\,.
\end{equation}

{\em Step 3}: Prove \eqref{eq:184}.\\

Given that $w=(\check U-\nu)^{-1}\phi$, we easily obtain that
\begin{equation}\label{eq:143aa}
    \|w^{\prime\prime}\|_2\leq C\,  |\tilde{\nu}|^{-1}\,\big( \|w\|_2 + \|w^\prime\|_2 + \|\phi^{\prime\prime}\|_2\big) \,.
\end{equation}
Using \eqref{eq:182} together with Hardy's inequality \eqref{eq:54} we
obtain that
\begin{displaymath}
  \|w\|_2\leq C\, \Big\|\frac{\phi}{U-U(-1)}\Big\|_2\leq \check C\, \|\phi^\prime\|_2 \,,
\end{displaymath}
and hence, using Poincar\'e's  inequality,  we can conclude from \eqref{eq:143aa} that 
\begin{equation}\label{eq:201}
  \|w^{\prime\prime}\|_2\leq C\,  |\tilde{\nu}|^{-1}\,
  \big[\|w^\prime\|_2+\|\phi^{\prime\prime}\|_2\big] \,.
\end{equation}

Substituting \eqref{eq:201}  together with \eqref{eq:200} into the right hand side of
\eqref{eq:186} 
leads to 
\begin{equation}
\label{eq:202}
\begin{array}{l}
  \|(\check U-\nu)w^\prime\|_2^2+|\tilde{\nu}|^2\|w^\prime\|_2^2 + \alpha^2\|\phi\|_2^2 \leq \\  \leq C\,\Big(\varepsilon^{-1}\beta^{-2}\|f\|_2^2 +\varepsilon\|w\|_2^2 \\ \qquad \quad +
  \alpha_0^2\, [\beta^{-4/3}\|w^\prime\|_2^2+\beta^{-2/3}\|\phi^\prime\|_2^2] \\\qquad  \qquad  +
   \beta^{-1}|\tilde{\nu}|^{-1}\|w^\prime\|_2^2+ \beta^{-13/6}\tilde{\nu}^{-1}\|f\|_2^2+
     \beta^{-1/2} |\tilde{\nu}|^{-1}\log^2\beta\|\phi^\prime\|_2^2\Big)\,.
 \end{array}
 \end{equation}

We now use Hardy's inequality \eqref{eq:56} to obtain
\begin{equation}
\label{eq:203}
  \|(\check U-\nu)w^\prime\|_2^2 \geq  \|[\check U-\check U(-1)]w^\prime\|_2^2 + |\tilde{\nu}|^2\|w^\prime\|_2^2 \geq C^{-1}\|w\|_2^2+ |\tilde{\nu}|^2\|w^\prime\|_2^2 \,.
\end{equation}
Using \eqref{eq:203}, we can obtain from \eqref{eq:202},  that for
sufficiently large $\beta_0$ and $\varepsilon^{-1}$ it holds that
\begin{multline}
\label{eq:204}
  \|(\check U-\nu)w^\prime\|_2^2+ \|w\|_2^2+|\tilde{\nu}|^2\|w^\prime\|_2^2 \leq \\ C\, \Big(\beta^{-2}\|f\|_2^2
  +  \big[|\tilde{\nu}^2\beta|^{-1/2}\log^2\beta + \beta^{-2/3}\big] \,\|\phi^\prime\|_2^2\|\Big) 
\end{multline}
Since
\begin{displaymath}
  (\check U-\nu)w^\prime=\phi^\prime - { \check U}^\prime w\,,
\end{displaymath}
we obtain by \eqref{eq:203} that 
\begin{displaymath}
  \|\phi^\prime\|_2\leq  C\,\|(\check U-\nu)w^\prime\|_2 \,.
\end{displaymath}
Combining the above with \eqref{eq:204} yields, with the aid of
\eqref{eq:183},  for sufficiently large 
$\beta_0$  that for $\beta \geq \beta_0$ and some $C>0$
\begin{displaymath}
   \|\phi^\prime\|_2\leq  C\, \beta^{-1}\, \|f\|_2 \,.
\end{displaymath}
Poincar\'e's inequality now completes the proof of the lemma.
\end{proof}

\subsubsection{Estimate in $[a,1)$}
In the sequel we need an estimate like \eqref{eq:184} for the
Orr-Sommerfeld operator defined on the interval $(a,1)$ for some
$a\in(-1,0)$. To this end, we introduce, for some $a\in(-1,0)$, $\B_{\lambda,\alpha,\beta}^{\Df,a}$ as the  operator whose
  differential form is given by \eqref{eq:6} and its domain by
  \begin{displaymath}
    D(\B_U^{\Df,a})=\{u\in H^4(a,1)\,,\, u(a)=u^\prime(a)= u(1) =u^\prime (1) =0 \}\,.
  \end{displaymath}
  We thus state and prove the following:
\begin{proposition}
\label{prop:outer}
  Let $U$ satisfy \eqref{eq:9} and \eqref{eq:10} in $(-1,1)$. Let
  further $\alpha_0$ and $\mu_0$ denote positive constants, and $
  0<\gamma<1/6$. 
  Then, there exist $a_0\in (-1,1)$, positive $\beta_0$ and $C$, such that,
  for all $a\in [-1,a_0]$, for all $\beta\geq \beta_0$ and $\alpha\in[0,\alpha_0)$, and $\lambda=\mu+i\nu$
  satisfying $U(a)-\beta^{-\gamma}<\nu<U(a)-\beta^{ -1/6}$ and $|\mu|<\mu_0\beta^{-1/3}$, it
  holds that
   \begin{equation}
\label{eq:205}
 \big\|(\B_{\lambda,\alpha,\beta}^{\Df,a}))^{-1}\big\|+
       \Big\|\frac{d}{dx}\, (\B_{\lambda,\alpha,\beta}^{\Df,a})^{-1}\Big\|\leq  C  \beta^{-1} \,.
   \end{equation}
\end{proposition}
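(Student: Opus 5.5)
The plan is to reduce Proposition \ref{prop:outer} to Lemma \ref{lem:large-nu} by an affine dilation of $(a,1)$ onto $(-1,1)$. Set
\begin{displaymath}
x=a+\tfrac{1-a}{2}(1+y)\,,\qquad \ell:=\tfrac{1-a}{2}\in\Big[\tfrac{1-a_0}{2},1\Big]\,,
\end{displaymath}
and let $\check U(y)=U(a+\ell(1+y))$. Given $\phi\in D(\B^{\Df,a}_{\lambda,\alpha,\beta})$, put $\check\phi(y)=\phi(x)$, so $\check\phi\in H^4(-1,1)$ with $\check\phi(\pm1)=\check\phi^\prime(\pm1)=0$. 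Then $\frac{d}{dx}=\ell^{-1}\frac{d}{dy}$. Multiplying the equation $\B_{\lambda,\alpha,\beta}\phi=f$ by $\ell^4$ gives
\begin{displaymath}
\Big[\Big(-\frac{d^2}{dy^2}+i\ell^2\beta(\check U+i\lambda)\Big)\Big(\frac{d^2}{dy^2}-\ell^2\alpha^2\Big)-i\ell^2\beta\,\check U^{\prime\prime}\ell^{-2}\cdot\ell^{2}\Big]\check\phi=\ell^4 f\,,
\end{displaymath}
where the last term uses $\ell^2\check U^{\prime\prime}_{yy}=\ell^4U^{\prime\prime}$. We therefore set $\check\beta=\ell^2\beta$ and $\check\alpha=\ell\alpha\in[0,\alpha_0)$, and work with $\check U$ itself. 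The equation then becomes exactly $\B_{\lambda,\check\alpha,\check\beta}[\check U]\check\phi=\ell^4\check f$, with the same $\lambda$.

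Next we check the hypotheses of Lemma \ref{lem:large-nu} uniformly in $a\in[-1,a_0]$.

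\emph{Monotonicity.} We choose $a_0$ close to $-1$. Since $U^\prime(-1)>0$ and $U\in C^4$, on $[-1,a_0]$ we have $U^\prime\geq c>0$. We claim $U(x)-U(a)\geq\Xi^\prime(x-a)$ for $x\in[a,1]$, with $\Xi^\prime$ independent of $a$. Near $a$ this follows from $U^\prime\geq c$. Far from $a$ it follows from (\ref{eq:9}b) together with compactness: $U(x)-U(a)\geq\min_{[a_0+\eta,1]}U-\max_{[-1,a_0]}U>0$, provided $a_0$ is small enough that $\max_{[-1,a_0]}U<\min_{[a_0+\eta,1]}U$. Such an $a_0$ exists since $U(-1)<U(x)$ for all $x>-1$. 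This gives \eqref{eq:182} for $\check U$ with $\Xi=\ell\,\Xi^\prime$.

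\emph{Regularity.} We have $\|\check U\|_{C^4}\leq\|U\|_{C^4}=:M$, since $\ell\leq1$.

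\emph{Range of $\lambda$.} Here $\check U(-1)=U(a)$, so $\tilde\nu=\nu-U(a)\in(-\beta^{-\gamma},-\beta^{-1/6})$. In terms of $\check\beta=\ell^2\beta$ this reads $-\ell^{2\gamma}\check\beta^{-\gamma}<\tilde\nu<-\ell^{1/3}\check\beta^{-1/6}$. This is not literally \eqref{eq:183}, because of the constant factors $\ell^{2\gamma},\ell^{1/3}$, which lie in $[c_0,1]$.

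This range mismatch is the main obstacle. I would handle it in one of two ways. The first is to rerun the proof of Lemma \ref{lem:large-nu} with \eqref{eq:183} replaced by $-\beta^{-\gamma}<\tilde\nu<-c_0\beta^{-1/6}$. Inspecting Steps 1--3, only $|\tilde\nu|\gtrsim\beta^{-1/6}$ and $|\tilde\nu|\ll1$ are used: for the $|\mu|^2$ term, to absorb $\beta^{-1}|\tilde\nu|^{-2}\|\phi^{\prime\prime}\|$ in \eqref{eq:200}, and for $|\tilde\nu^2\beta|^{-1/2}\log^2\beta\to0$ in \eqref{eq:204}. A constant $c_0$ is harmless in each of these after enlarging $\beta_0$. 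The second is to shrink $\gamma$ slightly and note that $\ell^{2\gamma}\check\beta^{-\gamma}\leq\check\beta^{-\gamma^\prime}$ for $\gamma^\prime<\gamma$ and large $\beta$. The upper end still needs the first approach. For $\mu$, the condition $|\mu|<\mu_0\beta^{-1/3}\leq\mu_0\check\beta^{-1/3}$ holds directly, since $\ell\leq1$.

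Finally we transform the estimate back. Lemma \ref{lem:large-nu} gives $\|\check\phi\|+\|\check\phi^\prime\|\leq C\check\beta^{-1}\ell^4\|\check f\|$. Norms scale by powers of $\ell\in[c_0,1]$: $\|\phi\|_{L^2(a,1)}=\ell^{1/2}\|\check\phi\|$, $\phi^\prime=\ell^{-1}\check\phi^\prime$, and $\|\check f\|=\ell^{-1/2}\|f\|$. Hence \eqref{eq:205} follows with a constant uniform in $a\in[-1,a_0]$. Invertibility of $\B^{\Df,a}_{\lambda,\alpha,\beta}$ transfers directly, because the change of variables is an isomorphism between the two domains.
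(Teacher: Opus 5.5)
Your proposal is correct and follows the paper's proof: map $(a,1)$ affinely onto $(-1,1)$ with $\check\beta=\ell^2\beta$ and $\check\alpha=\ell\alpha$, check \eqref{eq:182} and the $C^4$ bound uniformly in $a\in[-1,a_0]$, apply Lemma \ref{lem:large-nu}, and undo the scaling. The mismatch you flag at the upper end of the $\nu$-range ($-\ell^{1/3}\check\beta^{-1/6}$ versus $-\check\beta^{-1/6}$) is genuine, and the paper passes over it. Your first remedy closes it: the proof of Lemma \ref{lem:large-nu} only uses $|\tilde\nu|\gtrsim\beta^{-1/6}$ up to a constant. The lower end needs no remedy, since $\ell\le1$ gives $-\check\beta^{-\gamma}\le-\beta^{-\gamma}<\tilde\nu$ directly, so your second remedy (shrinking $\gamma$) is unnecessary.
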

\begin{proof}
  Applying the affine transformation
  \begin{displaymath}
   T_af(x)=  f\Big(\frac{2x-1-a}{1-a}\Big)\,,
  \end{displaymath}
 we obtain the laminar flow $T_a^{-1}U=\check{U}_a \in C^4([-1,1])$ satisfying
  \begin{displaymath}
    \check{U}_a(x)=U\Big(\frac{1-a}{2}x+\frac{1+a}{2}\Big)\,.
  \end{displaymath}
By \eqref{eq:9}, it holds for sufficiently small $a_0+1$ that there
exists $c>0$, such that, for all $a\in [-1,a_0]$,  
\begin{displaymath}
   U^\prime(a) \geq c \,,
\end{displaymath}
and that
\begin{displaymath}
  U(x)>U(a)\,,\; \forall x\in(a,1]\,. 
\end{displaymath}
It follows that there exists $\Xi>0$ such that
\begin{displaymath}
  U(x)-U(a)\geq \Xi(x-a)\,.
\end{displaymath}
Hence,
\begin{displaymath}
    \check{U}_a(x)-   \check{U}_a(-1)=
    U\Big(\frac{1-a}{2}x+\frac{1+a}{2}\Big)-U(a)\geq \Xi\Big[
    \frac{1-a}{2}x+\frac{1+a}{2}-a\Big]\geq\Xi\,\frac{1-a}{2}(1+x)
\end{displaymath}
Hence, \eqref{eq:182} is satisfied.  Moreover $\check U_a$ is
uniformly bounded in $C^4([-1,+1])$ for $a \in [-1,a_0]$ and,  given that
$\lambda - \check{U}_a(-1)=\lambda-U(a)$, we may use \eqref{eq:184} to obtain
\begin{displaymath}
 \big\|(\B_{\lambda,\alpha,\beta}^{\Df,a})^{-1}\big\|=\frac{(1-a)^4}{16}\big\| (T_a^{-1}\B_{\lambda,\tilde{\alpha},\tilde{\beta}}^\Df
 T_a)^{-1}\big\|\leq  \big\| (\B_{\lambda,\tilde{\alpha},\tilde{\beta}}^\Df)^{-1}\big\|\leq\frac{C}{\tilde{\beta}}\,,
\end{displaymath}
where
\begin{displaymath}
  \tilde{\beta}=\frac{(1-a)^2}{4}\beta \quad ; \quad
  \tilde{\alpha}=\frac{(1-a)}{2}\alpha \,,
\end{displaymath}
and hence, we have obtained \eqref{eq:205}. 
\end{proof}
\newpage 
\subsubsection{A uniform estimate}
We now return to the interval $(-1,1)$ to obtain a $v(1)$-dependent uniform
estimate of $\phi$. To allow for the application of Proposition
\ref{prop:outer} we set in the sequel in
\eqref{eq:148}, the stronger condition for $\delta(\beta)$:
\begin{equation}\label{eq:206}
\delta(\beta)=\beta^{-s} \mbox{ with }  s\in(0,1/6)\,.
\end{equation} 
For convenience of notation we use $\delta$ for $\delta(\beta)$ in the sequel, and
similarly $\varepsilon$ for $\varepsilon(\beta)$, which is defined in \eqref{eq:149}.  We
continue by setting a new cutoff function $\chi_\delta\in C^\infty(\R,[0,1])$
\begin{subequations}
\label{eq:chidelta}
\begin{equation}
  \chi_\delta(x)=\eta\Big(\frac{x+1}{2\delta^{1/2}}-1\Big)\,,
\end{equation}
where $\eta$ is given by \eqref{eq:50}. It can be easily verified that
\begin{equation}
  \chi_\delta(x)=
  \begin{cases}
    1 & x<-1+\delta^{1/2} \\
    0 & x>-1+2\delta^{1/2}
  \end{cases}\,.
\end{equation}
\end{subequations}

We can now obtain the following estimate for $\phi$ satisfying \eqref{eq:146} which depends of $f$
and $v(-1)$ only.
\begin{lemma}
  \label{lem:outer}
Let $\alpha_0>0$. Then, there exist positive $\beta_0$, $\varkappa_0$ and $C$, such
that for any $\beta\geq \beta_0$, $0<\varkappa\leq \varkappa_0$, $0\leq\alpha\leq\alpha_0$,  $\delta(\beta)$ satisfying
 \eqref{eq:206}, and $\lambda$ satisfying  \eqref{eq:98} (or equivalently
$(K)_{\varkappa,\beta}$),  it holds that
\begin{equation}
\label{eq:207}
 \|\phi\|_\infty\leq C\,\Big[( \delta^{1/4}|\lambda-\lambda_0(\beta) |+\beta^{-2/3}\delta^{-1/4})|v(-1)|+\delta^{-1/4}\beta^{-5/6}\|f\|_2\Big]\,,
\end{equation}
for any $(\phi,f)\in D(\B_{\lambda,\alpha,\beta}^\Df)\times L^2(-1,1)$ satisfying \eqref{eq:185}.
\end{lemma}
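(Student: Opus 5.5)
Split $\phi$ with the cutoff $\chi_\delta$ of \eqref{eq:chidelta} into a boundary-layer part, controlled by the boundary layer estimates of Section~\ref{sec:4.1}, and an outer part on $(a,1)$, $a=-1+\delta^{1/2}$, controlled by Proposition~\ref{prop:outer}. The two parts are then coupled only through the values $\phi(a)$, $\phi'(a)$ and the commutator terms of $\chi_\delta$.

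\emph{Outer part.} Set $\phi_{out}=(1-\chi_\delta)\phi$, which vanishes with its derivative at $a$ and at $1$. Then
\begin{displaymath}
  \B_{\lambda,\alpha,\beta}\phi_{out}=(1-\chi_\delta)f-[\B_{\lambda,\alpha,\beta},\chi_\delta]\phi .
\end{displaymath}
The commutator contains $\beta\,\chi_\delta''(U+i\lambda)\phi$, $\beta\,\chi_\delta'(U+i\lambda)\phi'$, and higher-derivative terms carrying powers of $\delta^{-1/2}$. All of these are supported in $(-1+\delta^{1/2},-1+2\delta^{1/2})$.

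We have $U(a)\approx\delta^{1/2}=\beta^{-s/2}$ with $s/2<1/12$, and $\nu=\Im\lambda=\OO(\beta^{-1/3})$. Hence $\nu$ lies in the window $U(a)-\beta^{-\gamma}<\nu<U(a)-\beta^{-1/6}$ for some $\gamma\in(s/2,1/6)$. Proposition~\ref{prop:outer} (with a fixed shift of $a$ inside the support, if needed) then gives
\begin{displaymath}
  \|\phi_{out}\|+\|\phi_{out}'\|\le C\beta^{-1}\big(\|f\|_2+\|[\B,\chi_\delta]\phi\|_2\big).
\end{displaymath}
The commutator norm is of order $\beta\delta^{-1/2}$ times $L^2$ norms of $\phi,\phi'$ (and, after integration by parts, $\phi'',\phi'''$ weighted by $\delta^{-1/2}$) over the transition layer. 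So $\phi_{out}$ is bounded by $\delta^{-1/2}$-weighted norms of $\phi$ near $x=-1+\delta^{1/2}$, and $\|\phi_{out}\|_\infty\le C\|\phi_{out}'\|_2$.

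\emph{Inner part.} On $(-1,-1+2\delta^{1/2})$ use $v=\tilde v+v(-1)\check\psi_-$ from \eqref{eq:147}. Exactly as in \eqref{eq:177}--\eqref{eq:181}, $|\phi(x)|\le C\big(\delta^{1/2}\|\tilde v\|_{L^1}+\beta^{-2/3}|v(-1)|+\delta^{1/2}|\lambda-\lambda_0(\beta)||v(-1)|\big)$. Indeed, the $\check\psi_-$ contribution is $\int(x-\xi)\check\psi_-$, and by \eqref{eq:170}--\eqref{eq:173} this is bounded by $\beta^{-2/3}+(1+x)|\lambda-\lambda_0(\beta)|$. Then $\|\tilde v\|_{L^1}$ is bounded by Lemma (eq. \eqref{eq:150}) with $\delta^{1/2}$ in place of $\delta$ (still admissible, since \eqref{eq:206} keeps $\varepsilon\to0$). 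This yields $I_0\le C[\varepsilon^kI_k+\delta^{1/2}\log\beta|\lambda-\lambda_0(\beta)||v(-1)|+\beta^{-5/6}\|f\|_2]$, and similarly for the derivatives using Lemma (eq. \eqref{eq:175}).

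\emph{Closing the loop — the main obstacle.} The term $\varepsilon^kI_k$ involves $\tilde v$ on a region of size $2^k\delta^{1/2}$, hence $\phi''$ there. This must be bounded a priori by a crude estimate in terms of $f$, $v(-1)$ and $\|\phi\|_{1,2}$. For that I would use \eqref{eq:74} applied to $\eta\tilde v$ and \eqref{eq:193}, giving $I_k\lesssim\beta^{1/3}\|\phi\|_{H^1}+\ldots$. Choosing $k$ large makes $\varepsilon^k$ absorb any polynomial loss $\beta^{N}$.

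The remaining coupling is the commutator: the outer estimate needs $\phi,\phi'$ on the transition layer, where both inner and outer descriptions hold. Each side feeds the other with a factor like $\beta^{-1}\cdot\beta\delta^{-1/2}\cdot(\text{inner smallness})$. The danger is that this product is not small. I would resolve it by a Gagliardo–Nirenberg interpolation on the layer of width $\delta^{1/2}$: this is where $\|\phi\|_\infty\lesssim\delta^{-1/4}\|\phi'\|_{2}^{1/2}\ldots$ produces the $\delta^{\pm1/4}$ factors appearing in \eqref{eq:207}. Balancing $\delta^{1/2}\cdot\delta^{-1/4}=\delta^{1/4}$ on the $|\lambda-\lambda_0(\beta)|$ term and $\beta^{-2/3}\delta^{-1/4}$ on the other, then absorbing the $\|\phi\|$ terms on the left for $\beta\ge\beta_0$, gives \eqref{eq:207}.
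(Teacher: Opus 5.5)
Your plain cutoff $(1-\chi_\delta)\phi$ on $(-1+\delta^{1/2},1)$ is a different decomposition from the paper's, and it could be made to work. However, the step you single out as the main obstacle is not resolved, and neither mechanism you offer for it does the job.

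First, the term $\varepsilon^kI_k$ needs a bound on $v=-\phi''+\alpha^2\phi$ over all of $(-1,0)$, in terms of something you can absorb on the left. Citing \eqref{eq:74} for $\eta\tilde v$ together with \eqref{eq:193} does not give this, for three reasons:
\begin{itemize}
\item \eqref{eq:74} is a Dirichlet resolvent bound, while $v$ only satisfies $\langle v,\zeta_\pm\rangle=0$ (and $\tilde v(1)\neq0$).
\item Localizing with $\eta$ leaves commutator terms in $\tilde v,\tilde v'$ in the interior, which you never bound.
\item \eqref{eq:193} involves $\LL^\zeta_{\beta,U}$, whose inverse near $\lambda_0(\beta)$ is controlled only by the no-slip lemma.
\end{itemize}
That lemma is the missing ingredient. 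Applying \eqref{eq:102}, \eqref{eq:103}, \eqref{eq:105} to \eqref{eq:214} gives \eqref{eq:216}:
\[
I_k\lesssim \beta^{-1/3}\log\beta\,|\lambda-\lambda_0(\beta)|^{-1}\|\phi\|_\infty+\beta^{-1/3}|v(-1)|+\beta^{-7/6}|\lambda-\lambda_0(\beta)|^{-1}\|f\|_2 .
\]
The lower bound in \eqref{eq:98} makes the first term at most $\varkappa\beta^{1/3}\log\beta\,\|\phi\|_\infty$, and $\varepsilon^k$ absorbs this for $k$ large. A crude polynomial bound from energy and interior estimates would also work. But it would have to be proved, and you would then need $\|\phi\|_{H^1}$, not only $\|\phi\|_\infty$, on the left.

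Second, there is no feedback loop through the commutator, provided the values of $\phi,\phi',\phi'',\phi'''$ on the layer $(-1+\delta^{1/2},-1+2\delta^{1/2})$ come from the inner estimates alone (boundary-layer lemma applied at scale $\approx\delta^{1/2}$). If you instead let the outer bound feed them, the gain is of order $\delta^{-1/2}$, and Gagliardo--Nirenberg interpolation cannot make it small, since interpolation produces no smallness.

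The factors $\delta^{\pm1/4}$ do not come from interpolation. They are just $L^2$ norms over a layer of width $\delta^{1/2}$, combined with $|U+i\lambda|\lesssim\delta^{1/2}$ there. For example, $\beta^{-1}\|\beta(U+i\lambda)\chi_\delta''\phi\|_2\lesssim\delta^{1/2}\,\delta^{-1}\,\delta^{1/4}\sup|\phi|$.

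The terms $\chi_\delta'\phi'''$ and $\chi_\delta''\phi''$ cannot be integrated by parts away against the $L^2\to H^1$ bound \eqref{eq:205}. They must be estimated directly through $\tilde v,\tilde v'$ on the layer.

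The paper avoids all of this. It subtracts $[\phi(-1+\delta)+\phi'(-1+\delta)(x+1-\delta)]\chi_\delta$ and applies Proposition~\ref{prop:outer} on $(-1+\delta,1)$. The forcing then involves only the two numbers $\phi(-1+\delta)$ and $\phi'(-1+\delta)$, controlled by \eqref{eq:175}, times explicit cutoff derivatives.

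Finally, Proposition~\ref{prop:outer} at $a=-1+\delta^{1/2}$ requires $\gamma<s/2$, not $\gamma\in(s/2,1/6)$.
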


\begin{proof} 
Set
\begin{equation}
  \label{eq:208}
\hat{\phi}(x)=\phi(x)-[\phi(-1+\delta)+\phi^\prime(-1+\delta)\cdot (x+1-\delta)]\chi_\delta \,,
\end{equation}
It can be easily verified that
\begin{displaymath}
  \hat{\phi}^{\prime\prime}=\phi^{\prime\prime}-2\chi_\delta^\prime\phi^\prime(-1+\delta)-\chi_\delta^{\prime\prime}[\phi(-1+\delta)+\phi^\prime(-1+\delta)\cdot (x+1-\delta)] \,.
\end{displaymath}
It follows that
\begin{displaymath}
  \B_{\lambda,\alpha,\beta}\hat{\phi}=f+ \beta(U+i\lambda)g_1 + g_2 \,,
\end{displaymath}
where
\begin{equation}
\label{eq:209}
  g_1 = 2\chi_\delta^\prime\phi^\prime(-1+\delta)+(\chi_\delta^{\prime\prime}-\alpha^2\chi_\delta)[\phi(-1+\delta)+\phi^\prime(-1+\delta)\cdot (x+1-\delta)] \,,
\end{equation}
and 
\begin{multline}
\label{eq:210}
  g_2=-i\beta U^{\prime\prime}[\phi(-1+\delta)+\phi^\prime(-1+\delta)\cdot(x+1-\delta)]\chi_\delta -
  [4\chi_\delta^{(3)}-2\alpha^2\chi_\delta^\prime]\phi^\prime(-1+\delta) \\
  -  (\chi_\delta^{(4)}-\alpha^2\chi_\delta^{\prime\prime})[\phi(-1+\delta)+\phi^\prime(-1+\delta)\cdot (x+1-\delta)]\,.  
\end{multline}
We also have  that 
\begin{displaymath}
  \hat{\phi}(-1+\delta )=\hat{\phi}^\prime(-1+\delta)=\hat{\phi}(1)=\hat{\phi}^\prime(1)=0\,. 
\end{displaymath}

By \eqref{eq:205} applied with $a=-1 + \delta(\beta)$ (notice that $\beta$ can be  chosen large
enough so that $-1 +\delta(\beta) < a_0$) and the pair
$(\hat \phi,f+ \beta(U+i\lambda)g_1 + g_2 $), it holds that
\begin{displaymath}
  \|\hat{\phi}\|_2+\|\hat{\phi}^\prime\|_2\leq C\,\big[\|(U+i\lambda)g_1\|_2 +\beta^{-1}(\|g_2\|_2+\|f\|_2) \big] \,.
\end{displaymath}

By \eqref{eq:209} and \eqref{eq:175} it holds that, for any $k$, there
exists $C>0$ such that 
\begin{displaymath}
  \|(U+i\lambda)g_1\|_2 \leq C\, \big[\delta^{1/4}\varepsilon^kI_k(\beta) +
    |v(-1)|  \delta^{1/4}( |\lambda-\lambda_0(\beta) |+\beta^{-2/3} \delta^{-1/2})+\delta^{1/4}\beta^{-5/6}\|f\|_2\big] \,. 
\end{displaymath}
Similarly, by \eqref{eq:210} and \eqref{eq:175} we have
\begin{displaymath}
   \|g_2\|_2 \leq C \beta\big[\delta^{3/4}\varepsilon^kI_k(\beta)+
    |v(-1)|\big( \delta^{5/4}|\lambda-\lambda_0(\beta) |+\delta^{1/4}\beta^{-2/3})+\delta^{3/4}\beta^{-5/6}\|f\|_2\big] \,. 
\end{displaymath}
Consequently,
\begin{multline}
\label{eq:211}
   \|\hat{\phi}\|_2+\|\hat{\phi}^\prime\|_2\leq C \, \Big[\delta^{1/4}\varepsilon^kI_k(\beta)+ \\
   +  |v(-1)|( \delta^{1/4}|\lambda-\lambda_0(\beta) |+\beta^{-2/3}\delta^{-1/4})+\delta^{-1/4}\beta^{-5/6}\|f\|_2\Big]\,.
\end{multline}
Since by \eqref{eq:208} and \eqref{eq:175} it holds that
\begin{multline*}
   \|\hat{\phi}-\phi\|_{L^2(-1+\delta,1)}+\|\hat{\phi}^\prime-\phi^\prime\|_{L^2(-1+\delta,1)}\\ \leq
   C \big[ \delta^{3/4} \epsilon^kI_k(\beta) +
 + (\delta^{3/4}|\lambda-\lambda_0(\beta) | +\delta^{1/4}\beta^{-2/3})|v(-1)|+\delta^{1/4}\beta^{-5/6}\|f\|_2\big] \,, 
\end{multline*}
we may conclude from \eqref{eq:211} that 
\begin{multline}
  \label{eq:212}
 \|\phi\|_{L^\infty(-1+\delta,1)}\leq C\, \|\phi\|_{H^1(-1+\delta,1)}\\
 \leq \hat C \Big[\delta^{1/4}\varepsilon^kI_k(\beta)+ 
    |v(-1)|\, \big(\delta^{1/4}|\lambda-\lambda_0(\beta) |+\beta^{-2/3}\delta^{-1/4}\big)\\ +\delta^{-1/4}\beta^{-5/6}\|f\|_2\Big]\,.
\end{multline}

{\bf   Estimate of $I_k(\beta)$.\\}
We now turn to estimate $I_k(\beta)$, whose definition is given in \eqref{eq:151}.
We have, using the definition of $\eta_\delta$ in \eqref{eq:76}
\begin{displaymath}
  I_k(\beta)\leq\|\tilde{v}\|_1+\beta^{-1/6}\|\tilde{v}\|_2+\beta^{-1/2}\|\tilde{v}^\prime\|_2 +\beta^{-1/2} \| \eta_{2^k\delta}^\prime \tilde v\|_2 \,,
\end{displaymath}
Obviously, for all $k\in\N\,$, 
\begin{displaymath}
  I_k(\beta)\leq\|\tilde{v}\|_{L^1(-1,0)}+ C \beta^{-1/6}  \|\tilde{v}\|_{L^2(-1,0)}
  +\beta^{-1/2}\|\tilde{v}^\prime\|_{L^2(-1,0)} \,.
\end{displaymath}
By \eqref{eq:147} it holds that
\begin{multline*}
    I_k(\beta)\leq\|v\|_{L^1(-1,0)}+ |v(-1)|\,\|\check{\psi}_-\|_{L^1(-1,0)} \\ + C\big[
    \beta^{-1/6}  (\|v\|_{L^2(-1,0)}+ |v(-1)|\,\|\check{\psi}_-\|_{L^2(-1,0)})\\
  +\beta^{-1/2}(\|v^\prime\|_{L^2(-1,0)}+ |v(-1)|\, \|\check{\psi}_-\|_{L^2(-1,0)})\big]  \,.
\end{multline*}
Hence, by
\eqref{eq:29}, \eqref{eq:30}, and \eqref{eq:31}, it holds that
\begin{equation}
\label{eq:213}
    I_k (\beta) \leq\|v\|_{L^1(-1,0)}+ C \big(\beta^{-1/6}\|v\|_{L^2(-1,0)}+\beta^{-1/2}\|v^\prime\|_{L^2(-1,0)}\big) +C\beta^{-1/3}|v(-1)|\,.
\end{equation}
Given that $v\in D(\LL_{\beta,U}^\zeta)$,  where $\zeta_\pm$ are given by
(\ref{eq:18}b), it holds that 
\begin{equation}
\label{eq:214}
  (\LL_{\beta,U}^\zeta -\beta\lambda)v=-i\beta\phi+f \,.
\end{equation}
Hence, we obtain from  (\ref{eq:102}a,b),  (\ref{eq:103}a,b), 
and (\ref{eq:105}ab),
applied with $g_1=i\beta\phi$ and $g_2= f  $\,,
 that
\begin{multline}
\label{eq:215}
  \|v\|_{L^1(-1,0)}+\beta^{-1/6}\|v\|_{L^2(-1,0)}+\beta^{-1/2}\|v^\prime\| _{L^2(-1,0)}\\ \leq C \Big[\log
    \beta \,\beta^{-1/3}\,|\lambda-\lambda_0(\beta)|^{-1}\,\|\phi\|_\infty+ \beta^{-7/6}\,|\lambda-\lambda_0(\beta)|^{-1}\,
  \|f\|_2\Big]\,.
\end{multline}
Substituting the above inequality into \eqref{eq:213} yields
\begin{multline}
\label{eq:216}
  I_k(\beta) \leq C \Big[\log
    \beta \,\beta^{-1/3}\,|\lambda-\lambda_0(\beta)|^{-1}\,\|\phi\|_\infty \\ +\beta^{-1/3}|v(-1)|+ \beta^{-7/6}\,|\lambda-\lambda_0(\beta)|^{-1}\,
 \|f\|_2\Big]\,, 
\end{multline}
which, combined with \eqref{eq:212}, leads to 
\begin{multline*}
 \|\phi\|_{L^\infty(-1+\delta,1)} \leq C_k\Big[ \delta^{1/4} \varepsilon^k\big(\log
    \beta \,\beta^{-1/3}\,|\lambda-\lambda_0(\beta)|^{-1}\,\|\phi\|_\infty +\beta^{-1/3}|v(-1)|\\ +\beta^{-7/6}\,|\lambda-\lambda_0(\beta)|^{-1}\,
 \|f\|_2\big)\\
 + \big( \delta^{1/4}|\lambda-\lambda_0(\beta) |+\beta^{-2/3}\delta^{-1/4}\big)\, |v(-1)| +\delta^{-1/4}\beta^{-5/6}\|f\|_2 \Big]\,.
\end{multline*}
For sufficiently large $k$, we can drop from the right hand side the
terms $\delta^{1/4} \varepsilon^k\beta^{-1/3}|v(-1)|$ and $\delta^{1/4}
\varepsilon^k\beta^{-7/6}\,|\lambda-\lambda_0(\beta)|^{-1}\, \|f\|_2$ to obtain 
\begin{multline}
  \label{eq:217}
\|\phi\|_{L^\infty(-1+\delta,1)}\leq C \, \Big[ \delta^{1/4}\varepsilon^k \log
    \beta \,\beta^{-1/3}\,|\lambda-\lambda_0(\beta)|^{-1}\,\|\phi\|_\infty \\
\qquad +\big(\delta^{1/4}|\lambda-\lambda_0(\beta) |+\beta^{-2/3}\delta^{-1/4}\big)|v(-1)|+\delta^{-1/4}\beta^{-5/6}\|f\|_2 \Big]\,.
\end{multline}
As in \eqref{eq:180}, 
we can obtain that for any $x\in(-1,-1+\delta)$
\begin{displaymath}
   |\phi(x)|\leq C\,\Big[\delta\,\|\tilde{v}\|_{L^1(-1,x)}+\beta^{-2/3}\, |v(1)|\Big] \,,
\end{displaymath}
and hence, using the definition of $I_0$ in \eqref{eq:151}, 
\begin{displaymath}
  \|\phi\|_{L^\infty(-1,-1+\delta)}\leq C\,\big [\delta I_0+\beta^{-2/3}|v(1)|\big] \,.
\end{displaymath}
By \eqref{eq:150} it holds that
\begin{displaymath}
   \|\phi\|_{L^\infty(-1,-1+\delta)}\leq C\,\big[\delta\varepsilon^kI_k+\delta^2|\lambda-\lambda_0(\beta) |\log\beta\,|v(-1)|+\delta\beta^{-5/6}\|f\|_2\big] \,.
\end{displaymath}
By \eqref{eq:216} we then obtain, for sufficiently large $k$ 
\begin{multline*}
    \|\phi\|_{L^\infty(-1,-1+\delta)}\leq C\, \Big[\delta\varepsilon^k\log
    \beta \,\beta^{-1/3}\,|\lambda-\lambda_0(\beta)|^{-1}\,\|\phi\|_\infty \\ +\delta^2|\lambda-\lambda_0(\beta) |\log\beta\,|v(-1)|+\delta\beta^{-5/6}\|f\|_2\Big] \,.
\end{multline*}
By combining the above with \eqref{eq:217} and setting $k$ and then
$\beta_0$ to be sufficiently large so that for $\beta\geq \beta_0$ and $\lambda$
satisfying $(K)_{\varkappa,\beta}$ we have
\begin{displaymath}
 C\, \delta\varepsilon^k\frac{\log
    \beta}{\beta^{1/3}|\lambda-\lambda_0(\beta) |} <1/2\,, 
\end{displaymath}
we can conclude \eqref{eq:207}.
\end{proof}
\subsection{Existence of eigenvalues}
\label{sec:4.3}
Using the estimate of $\|\phi\|_\infty$ obtained in \eqref{eq:207} it is now
possible to prove the existence of critical values of $\lambda$ (that serve
as eigenvalues of the linearized Navier-Stokes operator). We begin by
establishing estimates of $(\B^\D_{\lambda,\alpha,\beta})^{-1}$, and then proceed to
prove the existence of eigenvalues.

\begin{lemma}
\label{lem:edge-value}
Let $U$ satisfy \eqref{eq:9}, $\alpha_0>0$,  $ 0<\varepsilon<1/6$\,. Then, there
exist positive $\beta_0$, $\varkappa_0$ and $C$ such that 
for all $0\leq\alpha\leq\alpha_0$, $0<\varkappa<\varkappa_0$,  $\beta\geq \beta_0$,  $\lambda$ satisfying 
\begin{equation}
\label{eq:218}
  \beta^{-1/6+\varepsilon} \leq \beta^{1/3}|\lambda- \lambda_0(\beta) |\leq\varkappa \,,
\end{equation} 
and $(\phi,f)\in D(\B_{\lambda,\alpha,\beta}^\Df)\times L^2(-1,1)$ satisfying \eqref{eq:146} 
 it holds, for
$v=-\phi^{\prime\prime}+\alpha^2\phi\,$, 
\begin{equation}
    \label{eq:219}
|v(-1)|\leq C \, \beta^{-\frac{5}{6}+\varepsilon}\, |\lambda-\lambda_0(\beta) |^{-1}\,\|f\|_2 \,.  
  \end{equation}
\end{lemma}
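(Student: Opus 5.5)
We plan to bound $v(-1)$ through the no-slip Schr\"odinger equation satisfied by $v$. Since $\phi\in D(\B^\D_{\lambda,\alpha,\beta})$, the function $v=-\phi''+\alpha^2\phi$ lies in $D(\LL^\zeta_{\beta,U})$, with $\zeta_\pm$ given by (\ref{eq:18}b). By \eqref{eq:214}, $v$ solves $(\LL^\zeta_{\beta,U}-\beta\lambda)v=-i\beta U''\phi+f$.

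The hypothesis \eqref{eq:218} implies \eqref{eq:98} for $\beta$ large, since $\beta^{-1/6+\varepsilon}\ge\varkappa^{-1}\beta^{-1/3}$. Hence we may split $v=v_1+v_2$ according to the two source terms and apply \eqref{eq:104} to each piece. The $\zeta_\pm$ from \eqref{eq:18} satisfy \eqref{eq:95}, \eqref{eq:99} and \eqref{eq:100} uniformly for $\alpha\le\alpha_0$. Using (\ref{eq:104}a) for the $L^\infty$ source $g_1=-i\beta U''\phi$ and (\ref{eq:104}b) for $g_2=f$, we get
\begin{displaymath}
|v(-1)|\le C\Big[\log\beta\,|\lambda-\lambda_0(\beta)|^{-1}\|\phi\|_\infty+\beta^{-5/6}|\lambda-\lambda_0(\beta)|^{-1}\|f\|_2\Big]\,.
\end{displaymath}

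Next we insert the uniform outer bound \eqref{eq:207} of Lemma \ref{lem:outer}, which is valid under \eqref{eq:98} with $\delta=\beta^{-s}$, $s\in(0,1/6)$. This gives
\begin{multline*}
|v(-1)|\le C\log\beta\,\Big[\big(\delta^{1/4}+\beta^{-2/3}\delta^{-1/4}|\lambda-\lambda_0(\beta)|^{-1}\big)|v(-1)|\\
+\delta^{-1/4}\beta^{-5/6}|\lambda-\lambda_0(\beta)|^{-1}\|f\|_2\Big]+C\beta^{-5/6}|\lambda-\lambda_0(\beta)|^{-1}\|f\|_2\,.
\end{multline*}
The main step, and the expected obstacle, is absorbing the $|v(-1)|$ terms into the left-hand side.

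The first coefficient is $C\delta^{1/4}\log\beta=C\beta^{-s/4}\log\beta$, which tends to $0$ for any fixed $s>0$.

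The second coefficient is $C\log\beta\,\beta^{-2/3}\delta^{-1/4}|\lambda-\lambda_0(\beta)|^{-1}$. By \eqref{eq:218}, $|\lambda-\lambda_0(\beta)|^{-1}\le\beta^{1/2-\varepsilon}$, so this coefficient is at most $C\log\beta\,\beta^{-1/6-\varepsilon+s/4}$. It is small provided $s/4<1/6+\varepsilon$, which holds since $s<1/6$. This is precisely where the lower bound in \eqref{eq:218} enters.

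After absorption we are left with
\begin{displaymath}
|v(-1)|\le C\log\beta\,\delta^{-1/4}\beta^{-5/6}|\lambda-\lambda_0(\beta)|^{-1}\|f\|_2=C\log\beta\,\beta^{s/4-5/6}|\lambda-\lambda_0(\beta)|^{-1}\|f\|_2\,.
\end{displaymath}
We choose $s$ small, for instance $s=2\varepsilon$ (admissible if $\varepsilon<1/12$; otherwise any $s<1/6$ works, since the final bound only improves when $\varepsilon$ is reduced). Then $\log\beta\,\beta^{s/4}\le\beta^{\varepsilon}$ for $\beta\ge\beta_0$, which yields \eqref{eq:219}.

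Two points need checking. First, the constants in Lemma \ref{lem:outer} and in \eqref{eq:104} must be uniform in $\varkappa\le\varkappa_0$ and $\alpha\le\alpha_0$. Second, the $\log\beta$ produced by the $L^\infty$ source in (\ref{eq:104}a) must be genuinely dominated by the power gains above. It is, since every exponent comparison used is strict.
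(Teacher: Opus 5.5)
Your proposal is correct and follows the paper's proof essentially step for step. Like the paper, you apply (\ref{eq:104}a,b) to \eqref{eq:214}, insert the outer bound \eqref{eq:207}, absorb the $|v(-1)|$ terms using the lower bound in \eqref{eq:218}, and take $s$ small. The paper chooses $s<\varepsilon$; your choice of any $s<1/6$ with $s<4\varepsilon$ works equally well, and you spell out the exponent check on the second absorbed coefficient that the paper leaves implicit.
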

\begin{proof}
  We use \eqref{eq:214} together with (\ref{eq:104}ab) with $g_1=\beta \phi$ and $g_2=f$
to obtain that
  \begin{displaymath}
    |v(-1)|\leq C\Big[\log \beta\,(\lambda-\lambda_0(\beta) )|^{-1}\, \|\phi\|_\infty + \beta^{- 5/6}(\lambda-\lambda_0(\beta) )^{-1}\|f\|_2\Big]
  \end{displaymath}
Combining the above with \eqref{eq:207} 
yields
\begin{multline}
\label{eq:220}
    |v(-1)|\leq C\, \log
      \beta \, |\lambda-\lambda_0(\beta)|^{-1} \Big[ (\delta^{1/4}|\lambda-\lambda_0(\beta) |\\ +\beta^{-2/3}\delta^{-1/4})|v(-1)|+\delta^{-1/4}\beta^{-5/6}\|f\|_2\Big] \,.
\end{multline}
From \eqref{eq:220} and \eqref{eq:206} we  obtain \eqref{eq:219}
by choosing $\beta_0$ large enough and $s<\varepsilon$ in \eqref{eq:206}. 
\end{proof}
\begin{lemma} 
 Let $U$ satisfy \eqref{eq:9} and $\alpha_0>0$. Let
further $ 0<\varepsilon<1/6$. Then, there exist positive $\beta_0$, $\varkappa_0$ and $C$ such that
for all $0\leq\alpha\leq\alpha_0$, $0<\varkappa<\varkappa_0$,  $\beta\geq \beta_0$ and and  $\lambda$ satisfying \eqref{eq:218}.
it holds that 
  \begin{equation}
    \label{eq:221}
\Big\|\Big(-\frac{d^2}{dx^2}+\alpha^2\Big)(\B^\D_{\lambda,\alpha,\beta})^{-1}\Big\|\leq
C \, \beta^{-1+\varepsilon}\, |\lambda-\lambda_0(\beta) |^{-1} \,.
  \end{equation}
\end{lemma}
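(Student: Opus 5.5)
We need to bound $\|v\|_2$ where $v=-\phi''+\alpha^2\phi$ and $\phi=(\B^\D_{\lambda,\alpha,\beta})^{-1}f$. The plan is to combine the boundary value bound \eqref{eq:219} with the uniform bound \eqref{eq:207}, and then to feed both into the no-slip Schr\"odinger estimates for $v$ via \eqref{eq:214}.

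\textbf{Step 1: a bound on $\|\phi\|_\infty$.} We insert \eqref{eq:219} into \eqref{eq:207} and choose $s<\varepsilon$ in \eqref{eq:206}. Using $|\lambda-\lambda_0|\le \varkappa\beta^{-1/3}$ and $|\lambda-\lambda_0|^{-1}\le \beta^{1/2-\varepsilon}$ from \eqref{eq:218}, the $v(-1)$ contributions are controlled as follows:
\begin{displaymath}
\delta^{1/4}|\lambda-\lambda_0|\,|v(-1)|\le C\beta^{-5/6+\varepsilon}\|f\|_2 ,\qquad \beta^{-2/3}\delta^{-1/4}|v(-1)|\le C\beta^{-3/2+\varepsilon+s/4}|\lambda-\lambda_0|^{-1}\|f\|_2 .
\end{displaymath}
We also keep the term $\delta^{-1/4}\beta^{-5/6}\|f\|_2$. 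Altogether, up to an arbitrarily small power $\beta^{O(s)}$,
\begin{displaymath}
\|\phi\|_\infty\le C\beta^{-5/6+\varepsilon}\|f\|_2 .
\end{displaymath}

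\textbf{Step 2: the estimate of $v$ on $(-1,0)$.} We apply \eqref{eq:102} to \eqref{eq:214}, splitting the right-hand side as $g=-i\beta U''\phi+f$. The $L^\infty$ form (\ref{eq:102}b) handles the $\phi$ part and the $L^2$ form (\ref{eq:102}a) handles $f$:
\begin{displaymath}
\|v\|_{L^2(-1,0)}\le C|\lambda-\lambda_0|^{-1}\big(\beta^{-1/6}\log\beta\,\|\phi\|_\infty+\beta^{-1}\|f\|_2\big)\le C\beta^{-1+\varepsilon'}|\lambda-\lambda_0|^{-1}\|f\|_2 .
\end{displaymath}
Here we absorb $\log\beta$ and the $\beta^{s}$ losses by replacing $\varepsilon$ by a slightly smaller value at the start.

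\textbf{Step 3: the estimate of $v$ on $(0,1)$.} Away from $x=-1$ we use the outer estimate. Proposition~\ref{prop:outer} (through \eqref{eq:211} and the cutoff construction of Lemma~\ref{lem:outer}) gives $H^1$ control of $\phi$ on $(-1+\delta,1)$. To obtain control of $v$ itself we use Lemma~\ref{lem:large-nu}, Step~2, where \eqref{eq:200} gives $\|\phi''\|_2\lesssim \beta^{-7/12}\|f\|_2+\beta^{1/4}\log\beta\,\|\phi'\|_2$ on the dilated interval, applied to $\hat\phi$. The error terms $g_1,g_2$ there are of size $\beta$ times quantities already bounded in Steps 1--2. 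Alternatively, we take a cutoff $1-\chi$ supported in $(-1/2,1]$ and view $(1-\chi)v$ as a solution of a Schr\"odinger problem with source $-i\beta U''\phi+f$ plus commutator terms. Since $\nu=\Im\lambda=O(\beta^{-1/3})$ and $U\ge c>0$ there, the imaginary-part identity \eqref{eq:60} gives $\|(1-\chi)v\|_2\lesssim \beta^{-1}(\beta\|\phi\|_\infty+\|f\|_2+\text{commutator terms})$. The commutators involve $v,v'$ on $(-1/2,0)$, which are controlled by Step~2 and \eqref{eq:105}, together with the trace $v(1)$, handled by the $\zeta_+$ condition. We have $\beta\|\phi\|_\infty\cdot\beta^{-1}\lesssim \beta^{-5/6+\varepsilon}\|f\|_2\le \beta^{-1+\varepsilon}|\lambda-\lambda_0|^{-1}\|f\|_2$, because $|\lambda-\lambda_0|\le\varkappa\beta^{-1/3}$ with room to spare.

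\textbf{Main obstacle.} The delicate point is Step~3 near $x=1$. There $v$ has a boundary layer, $v\approx v(1)\check\psi_+$, so $v$ does not vanish at $x=1$. The plan is to subtract $v(1)\check\psi_+$ exactly as $v(-1)\check\psi_-$ was subtracted in \eqref{eq:147}, and to estimate $v(1)$ with the $+$ analogue of \eqref{eq:104}. That analogue follows from the $+$ estimates of \cite{AH-arma} used in the proof of \eqref{eq:104}, and it is unaffected by the proximity of $\lambda$ to $\lambda_0$, since $\psi_+$ carries no small denominator. Combining Steps 2 and 3 then yields \eqref{eq:221}.
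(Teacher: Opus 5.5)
Your Steps 1 and 2 are the paper's proof. The paper obtains the lemma from \eqref{eq:219}, \eqref{eq:207} and \eqref{eq:215}. Here \eqref{eq:215} is just \eqref{eq:102}--\eqref{eq:105} applied to \eqref{eq:214}, with the $\phi$-part in $L^\infty$ and $f$ in $L^2$, exactly as you do. Note that this controls $v$ only on $(-1,0)$. The paper never estimates $v$ on $(0,1)$, and it does not need to. The quasimode $\tilde\phi=\eta\hat\phi_0$ vanishes for $x>0$ by \eqref{eq:50}, so $\tilde v$ is supported in $[-1,0]$. The Cauchy-integral argument in the proof of Theorem~\ref{thm:main} can therefore be run entirely in $L^2(-1,0)$.

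Your Step 3 goes beyond the paper, and it has a genuine quantitative gap at the layer near $x=1$. You never check the final claim that Steps 2 and 3 combine to give \eqref{eq:221}. The interior (Dirichlet) part of Step 3 is fine; the subtracted term $v(1)\check\psi_+$ is not.

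\begin{itemize}
\item If you build the $+$ analogue of \eqref{eq:104} as in the paper, $v(1)$ is dominated by $A_+\psi_+(1)$.
\item $|\psi_+(1)|\approx\beta^{1/6}$: compare $\langle1,\psi_+\rangle\approx\beta^{-1/3}$ from \eqref{eq:130} with $\langle 1,\check\psi_+\rangle\approx\beta^{-1/2}$.
\item The only available bounds on $A_+$ are $|A_+|\lesssim\beta^{-2/3}\log\beta\,\|g\|_\infty$ from \eqref{eq:139} and $|A_+|\lesssim\beta^{-1/2}\|g\|_2$ from \eqref{eq:138}.
\item With $g=-i\beta U''\phi+f$ and your Step 1 bound $\|\phi\|_\infty\lesssim\beta^{-5/6+\varepsilon}\|f\|_2$, this gives only $|v(1)|\lesssim\beta^{-1/3+\varepsilon}\log\beta\,\|f\|_2$.
\item Since $\|\check\psi_+\|_2\approx\beta^{-1/4}$ by \eqref{eq:27}, the term $v(1)\check\psi_+$ alone has $L^2$ norm up to $\beta^{-7/12+\varepsilon}\log\beta\,\|f\|_2$. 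Even the bound for the direct $f$-contribution is $\beta^{-7/12}\|f\|_2$.
\end{itemize}

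At the upper end $|\lambda-\lambda_0|\approx\varkappa\beta^{-1/3}$ of \eqref{eq:218}, the right-hand side of \eqref{eq:221} is only $C\varkappa^{-1}\beta^{-2/3+\varepsilon}\|f\|_2$. So your argument closes only for $|\lambda-\lambda_0|\lesssim\beta^{-5/12}/\log\beta$. A fixed loss of $\beta^{1/12}$ cannot be absorbed by adjusting $\varepsilon$, which may be arbitrarily small.

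Your alternative route through \eqref{eq:200} fails the same way. There the term $\beta^{1/4}\log\beta\,\|\hat\phi'\|_2$, with the available $\|\hat\phi'\|_2\lesssim\beta^{-5/6+s/4}\|f\|_2$ from \eqref{eq:211}, is again of order $\beta^{-7/12}$.

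To get the bound on all of $(-1,1)$ you would need sharper input than the paper supplies. Roughly, you would need $|A_+|\ll\beta^{-7/12}\|f\|_2$ in place of what \eqref{eq:138}--\eqref{eq:139} give. For the $\phi$-part via \eqref{eq:139}, that means $\|\phi\|_\infty\ll\beta^{-11/12}\|f\|_2$ instead of $\beta^{-5/6+\varepsilon}$. Absent that, drop Step 3. State the estimate for the restriction of $(-d^2/dx^2+\alpha^2)(\B^\D_{\lambda,\alpha,\beta})^{-1}$ to $(-1,0)$. That is what your Steps 1--2, and the paper, actually prove, and it is all that is used afterwards.
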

The proof easily follows from \eqref{eq:219}, \eqref{eq:207}, and
\eqref{eq:215}. 
Note that 
\begin{displaymath}
  v= \Big(-\frac{d^2}{dx^2}+\alpha^2\Big)(\B^\D_{\lambda,\alpha,\beta})^{-1}\, f\,.
\end{displaymath}

\paragraph{Proof of Theorem \ref{thm:main}}
We now proceed to prove Theorem  \ref{thm:main}. The proof uses the
holomorphic dependence of $\B_{\lambda,\alpha,\beta}$ on $\lambda$, which allows for the
application of Cauchy's Theorem.

Let $ 0<\epsilon<1/12$ and $\lambda=\lambda_0+\rho e^{i\theta}$, where $\lambda_0(\beta)$ is given by
  \eqref{eq:7abcd},
  \begin{equation}\label{eq:rho}
    \beta^{ -\frac{1}{2}+\epsilon }<\rho< \beta^{-\frac{1}{3}-\epsilon }\,.
  \end{equation}
Let $\tilde{\phi}$ be the quasimode given by \eqref{eq:51} and set
\begin{displaymath}
  \tilde{f}=\B_{\lambda_0,\alpha,\beta}\, \tilde{\phi} \,.
\end{displaymath}
Clearly
\begin{displaymath}
  \B_{\lambda,\alpha,\beta}\tilde{\phi}=  \tilde{f} +\beta  (\lambda-\lambda_0(\beta) )\tilde{v}\,,
\end{displaymath}
where
\begin{displaymath}
  \tilde{v}=-\tilde{\phi}^{\prime\prime}+\alpha^2\tilde{\phi} \,.
\end{displaymath}
It follows that
\begin{displaymath}
   (\B^\D_{\lambda,\alpha,\beta})^{-1}\tilde{v}=(\lambda-\lambda_0(\beta))^{-1} \beta^{-1}  \, [\tilde{\phi}+(\B^\D_{\lambda,\alpha,\beta})^{-1}\tilde{f}]\,,
\end{displaymath}
and hence
\begin{displaymath}
   \Big(-\frac{d^2}{dx^2}+\alpha^2\Big)(\B^\D_{\lambda,\alpha,\beta})^{-1}\tilde{v}=
   \beta^{-1} |\lambda-\lambda_0|^{-1}
   \Big[\tilde{v}+\Big(-\frac{d^2}{dx^2}+\alpha^2\Big)(\B^\D_{\lambda,\alpha,\beta})^{-1}\tilde{f}\Big]\,.  
\end{displaymath}
Integrating over $\partial B(\lambda_0,\rho)$ yields
\begin{multline*}
\frac{1}{2i\pi}\,  \int_{\partial B(\lambda_0,\rho)}
  \Big(-\frac{d^2}{dx^2}+\alpha^2\Big)(\B^\D_{\lambda,\alpha,\beta})^{-1}\tilde{v}\,d\lambda\\ =
   \beta^{-1}\, \tilde{v}+\frac{1}{2i\pi}\,\beta^{-1}  \int_{\partial B(\lambda_0,\rho)}
   \frac{1}{\lambda-\lambda_0(\beta) }\Big(-\frac{d^2}{dx^2}+\alpha^2\Big)(\B^\D_{\lambda,\alpha,\beta})^{-1}\tilde{f}\Big]d\lambda\,.  
\end{multline*}
By \eqref{eq:221} it holds that for all $\epsilon>0$ there exists $C>0$
such that
\begin{displaymath}
  \Big\|\Big(-\frac{d^2}{dx^2}+\alpha^2\Big)(\B^\D_{\lambda,\alpha,\beta})^{-1}\tilde{f}\Big\|_2\leq C \frac{\beta^{-1+\epsilon}}{|\lambda-\lambda_0(\beta) |}\|\tilde{f}\|_2\,.
\end{displaymath}
We now use \eqref{eq:53} 
to obtain that
\begin{displaymath}
    \Big\|\Big(-\frac{d^2}{dx^2}+\alpha^2\Big)(\B^\D_{\lambda,\alpha,\beta})^{-1}\tilde{f}\Big\|_2\leq C \frac{\beta^{-2/3+\epsilon}}{|\lambda-\lambda_0(\beta) |}\,.
\end{displaymath}
 Since,
\begin{displaymath}
  \tilde{v}=\eta v_0 -2\eta^\prime\hat{\phi}_0^\prime+(\alpha^2-\eta^{\prime\prime})\hat{\phi}_0\,,
\end{displaymath}
where $\eta$ is given by \eqref{eq:50}, $\hat{\phi}_0$ by \eqref{eq:42}, and
$v_0$ by \eqref{eq:39}, we obtain by \eqref{eq:40}, \eqref{eq:48}, and
\eqref{eq:49} that there exists some $C_0>0$ such that
\begin{displaymath}
  \|\tilde{v}\|_2\geq C_0^{-1} \beta^{-1/6} \,.
\end{displaymath}
Hence, we may conclude,  using \eqref{eq:rho},  that
\begin{displaymath}
  \Big\|\int_{\partial B(\lambda_0,\rho)}\Big(-\frac{d^2}{dx^2}+\alpha^2\Big)(\B^\D_{\lambda,\alpha,\beta})^{-1}
  \tilde{v}\,d\lambda\Big\| \geq \frac{1}{C} {\beta^{-1} } \Big[ \beta^{-1/6} -\frac{\beta^{-2/3+\epsilon}}{\rho} \Big] \geq \frac{1}{\hat C} \beta^{-7/6}\,.
\end{displaymath}
For sufficiently large $\beta$ we thus obtain that
\begin{displaymath}
 \frac{1}{2i\pi}\, \int_{\partial B(\lambda_0,\rho)}\Big(-\frac{d^2}{dx^2}+\alpha^2\Big)(\B^\D_{\lambda,\alpha,\beta})^{-1}
  \tilde{v}\,d\lambda \neq 0\,.
\end{displaymath}
By applying $(-d^2/dx^2+\alpha^2)^{-1}$ to both sides we can conclude that
\begin{displaymath}
   \int_{\partial B(\lambda_0,\rho)}(\B^\D_{\lambda,\alpha,\beta})^{-1}\tilde{v}\,d\lambda \neq 0\,.
\end{displaymath}
Theorem \ref{thm:main} is proved.
$\Box$


\newpage
\begin{thebibliography}{1}
\bibitem{abst72}
{\sc M.~Abramowitz and I.~A. Stegun}, {\em {Handbook of Mathematical
  Functions}}, Dover, 1972.

\bibitem{ag65}
{\sc S.~Agmon}, {\em Lectures on Elliptic Boundary Value Problems}, prepared
  for publication by B. Frank Jones, Jr. with the assistance of George W.
  Batten, Jr.
  \newblock  Van Nostrand Mathematical Studies, No. 2, D. Van Nostrand Co.,
  Inc., Princeton, N.J.-Toronto-London, 1965.

\bibitem{al08} {\sc Y.~Almog}, {\em {The stability of the normal state of superconductors in
  the presence of electric currents}}, SIAM Journal on Mathematical Analysis,
  40 (2008), 824--850.

\bibitem{AH-arma}
{\sc Y.~Almog and B.~Helffer}, {\em On the stability of laminar flows between
  plates}, Archive for Rational Mechanics and Analysis, 241 (2021),
  1281--1401.

\bibitem{AH-ems}
\leavevmode\vrule height 2pt depth -1.6pt width 23pt, {\em On the stability of
  symmetric flows in a two-dimensional channel}, arXiv preprint
  arXiv:2212.12827,  (2022). Mem. Eur. Math. Soc., 22, 
European Mathematical Society (EMS), Berlin, 2026, viii+204 pp.

\bibitem{almog2025stability} \leavevmode\vrule height 2pt depth
  -1.6pt width 23pt, {\em Stability of laminar monotone shear flows in
    a channel for high reynolds number}, arXiv preprint
  arXiv:2507.19106, (2025).
  
  \bibitem{Mas17} {\sc J. Bedrossian, P. Germain, and N. Masmoudi,} 
  \newblock {\em On the stability
  threshold for the 3D Couette flow in Sobolev regularity.}
  \newblock Annals of Mathematics 185 (2017), 541-608. 

\bibitem{BGM} \leavevmode\vrule height 2pt depth -1.6pt width 23pt, 
   {\em Stability of the Couette flow at high Reynolds numbers in 2D and 3D,}
   \newblock Bulletin of the American Mathematical Society 56 (2019),
    373-414.  


\bibitem{chen2020transition}
{\sc Q.~Chen, T.~Li, D.~Wei, and Z.~Zhang}, {\em Transition threshold for the
  2-d Couette flow in a finite channel},  Arch. Rat. Mech. Anal.,
238(1),  (2020), 125--183. 

\bibitem{chenetal23}
{\sc Q.~Chen, D.~Wei, and Z.~Zhang}, {\em Linear inviscid damping and enhanced
  dissipation for monotone shear flows}, Comm. Math. Phys., 400 (2023),
  215--276.

\bibitem{chen2024enhanced}
{\sc Q.~Chen and Z.~Li}, {\em Enhanced dissipation and transition threshold for
  the Poiseuille-Couette flow}, Journal of Differential Equations, 386 (2024),
  404--434.

\bibitem{chen2024transition}
\leavevmode\vrule height 2pt depth -1.6pt width 23pt, {\em Transition threshold
  for the 3D Couette flow in a finite channel},  Memoirs of the American Mathematical
  Society, vol.~296, (2024).

\bibitem{drre04}
{\sc P.~G. Drazin and W.~H. Reid}, {\em Hydrodynamic Stability}. \newblock  Cambridge
  Mathematical Library, Cambridge University Press, Cambridge, second~ed.,
 (2004).
\newblock With a foreword by John Miles.

\bibitem{grenier2016spectral}
{\sc E.~Grenier, Y.~Guo, and T.~T. Nguyen}, {\em Spectral instability of
  general symmetric shear flows in a two-dimensional channel}. Advances in
  Mathematics, 292 (2016), 52--110.
\bibitem{he11}
{\sc B.~Helffer}, {\em On pseudo-spectral problems related to a time-dependent
  model in superconductivity with electric current}, Confluentes Math., 3
  (2011), 237--251.

\bibitem{hen14}
{\sc R.~Henry}, {\em {Spectral instability for even non-selfadjoint anharmonic
  oscillators}}, Journal of Spectral Theory, 4 (2014), 349--364.

\bibitem{jezequel2025orr}
{\sc M.~J{\'e}z{\'e}quel and J.~Wang}, {\em Orr-Sommerfeld equation and complex
  deformation}, arXiv preprint arXiv:2503.22274,  (2025).
\bibitem{jia2023uniform}

{\sc H.~Jia}, {\em Uniform linear inviscid damping and enhanced dissipation
  near monotonic shear flows in high reynolds number regime (i): the whole
  space case}, Journal of Mathematical Fluid Mechanics, 25 (2023), p.~42.



\bibitem{Orr1907} {\sc W. Orr}, 
 \newblock {\em The stability or instability of the steady motions of a liquid. Part II,} 
 \newblock Proceedings of the Royal Irish Academy,  A. 27: 69--138 (1907).

\bibitem{wa53}
{\sc W.~Wasow}, {\em On small disturbances of plane {C}ouette flow}, J.
  Research Nat. Bur. Standards, 51 (1953), 195--202.

\bibitem{typo}
It would have been more natural to use here
\cite[Eq. (8.95)]{AH-arma}, unfortunately, there is a typo in the
formula which should read instead
    $$
    |\phi^\prime(\pm1)| \leq C \big[1+|\lambda_\pm|^{1/2} \beta^{1/6}\big]\,\big(\beta^{-1/2} \|f\| + \beta^{1/3}\log \beta \,\|\phi\|_\infty\big)\,.
    $$
\end{thebibliography}
\def\cprime{$^\prime$}

\section*{Acknowledgements}
  The authors wish to thank F. Nicoleau for contributing the appendix
  to this work. No new data were created or analysed in this
  study. Data sharing is not applicable to this article.

\appendix
\section{Numerical study of the zeros of a rotated Airy primitive by   F. Nicoleau}

In this appendix, we briefly discuss the numerical behavior of the zeros
of $A_0(iz)$, where $A_0$ is given by \eqref{eq:defA0}. We note that
\begin{displaymath}
  A_0(e^{i\frac{5\pi}{6}}z)=\int_{-z}^\infty\operatorname{Ai}(t)\,dt,
\end{displaymath}
where $\operatorname{Ai}$ denotes the Airy function.
Since $A_0(0)=1/3$ (see \cite[Eq. (40)]{wa53})  we have
\begin{displaymath}
   A_0(e^{i\frac{5\pi}{6}}z)=
   \frac{1}{3}+\int_{-z}^0\operatorname{Ai}(t)\,dt=
   \frac{1}{3}+\int_0^z\operatorname{Ai}(-t)\,dt\,.  
\end{displaymath}
We thus set $F(z)= A_0(e^{i\frac{5\pi}{6}}z)$ and search for the zeroes
of the entire function
\begin{equation}
	G(z)=F\!\left(e^{-i\pi/3}z\right),
\end{equation}
where
\begin{equation}
	F(z)=\frac13+\int_0^z \operatorname{Ai}(-t)\,dt\,.
\end{equation}

\medskip\noindent
Numerically, the zeros of $F$ appear in complex conjugate pairs
\[
z_k=x_k\pm i y_k,
\qquad y_k\to0,
\]
and therefore the zeros of $G$ accumulate asymptotically along the
half-line
\[
\arg z=\frac{\pi}{3}.
\]
\medskip\noindent
The following table lists several numerically computed zeros of
\[
G(z)=F(e^{-i\pi/3}z)
\]
in the first quadrant. The zeros are grouped into pairs corresponding
to the conjugate pairs
\[
z_k=x_k\pm i y_k
\]
of the original function \(F\).
\begin{table}[h]
	\centering
	\renewcommand{\arraystretch}{1.15}
	\begin{tabular}{ccc}
		\toprule
		$k$
		&
		$w_k^{+}$
		&
		$w_k^{-}$
		\\
		\midrule
		
		1
		&
		$1.062626 + 4.128843\,i$
		&
		$3.044370 + 2.984682\,i$
		\\
		
		2
		&
		$2.502602 + 6.404945\,i$
		&
		$4.295544 + 5.369790\,i$
		\\
		
		3
		&
		$3.675927 + 8.305705\,i$
		&
		$5.354988 + 7.336298\,i$
		\\
		
		4
		&
		$4.708299 + 10.000712\,i$
		&
		$6.306721 + 9.077863\,i$
		\\
		
		5
		&
		$5.648849 + 11.558221\,i$
		&
		$7.185289 + 10.671158\,i$
		\\
		
		6
		&
		$6.523024 + 13.014499\,i$
		&
		$8.009375 + 12.156354\,i$
		\\
		
		7
		&
		$8.128106 + 15.704826\,i$
		&
		$9.536725 + 14.891559\,i$
		\\
		
		8
		&
		$8.876115 + 16.964277\,i$
		&
		$10.253438 + 16.169079\,i$
		\\
		
		9
		&
		$10.289603 + 19.351860\,i$
		&
		$11.614401 + 18.586987\,i$
		\\
		\bottomrule
	\end{tabular}
	
	\caption{
		Numerically computed zeros of
		\(
		G(z)=F(e^{-i\pi/3}z)
		\)
		in the first quadrant.
		Each row corresponds to the pair of zeros obtained from a conjugate
		pair of zeros of the original function \(F\).
	}
	
	\label{tab:rotatedzeros}
	
\end{table}
	\begin{figure}[h]
		\centering
		\includegraphics[width=0.5\textwidth]{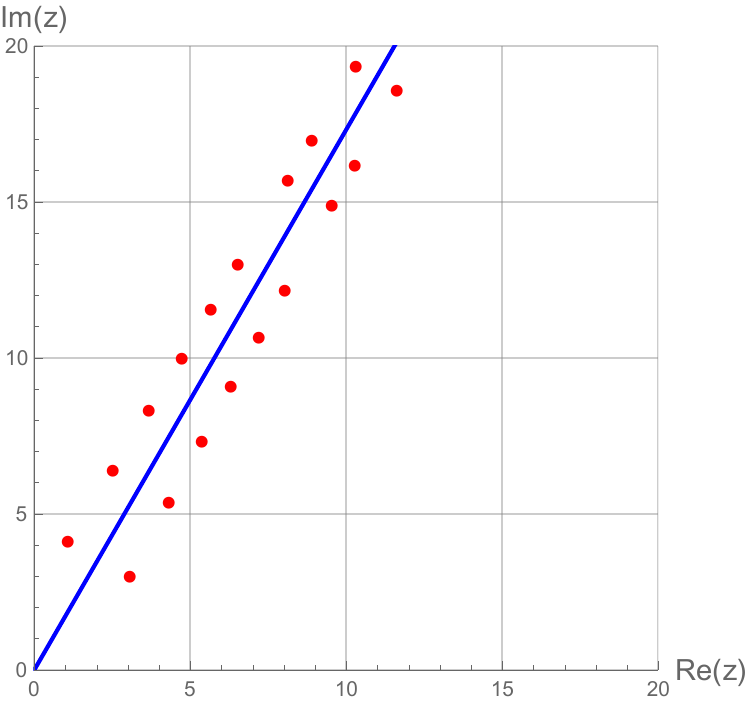}
		\caption{Zeros of $G(z)=F(e^{-i\pi/3}z)$ in the first quadrant.}
		\label{fig:airyzeros}
	\end{figure}
	Figure~\ref{fig:airyzeros} in the next page displays the corresponding point cloud.
	One clearly observes that the zeros lie on both sides of the half-line
	$\arg z=\pi/3$ and approach it asymptotically.

\end{document}